\documentclass[11pt,a4paper]{article}
\usepackage{amsmath,amssymb,amsthm,mathrsfs}
\usepackage[english]{babel}

\newcommand{\al}{\alpha}
\newcommand{\Om}{\Omega}

\newcommand{\del}{\delta}
\newcommand{\Del}{\Delta}
\newcommand{\ep}{\epsilon}
\newcommand{\lam}{\lambda}
\newcommand{\Lam}{\Lambda}
\newcommand{\Gam}{\Gamma}
\newcommand{\gam}{\gamma}
\newcommand{\ffi}{\varphi}
\newcommand{\sig}{\sigma}

\newcommand{\hac}{\mathcal{H}}
\newcommand{\hact}{\mathcal{H}_T}
\newcommand{\cinf}{\mathcal{C}^\infty}
\newcommand{\N}{\mathbb{N}}
\newcommand{\re}{\mathbb{R}}
\newcommand{\red}{\mathbb{R}^{d}}

\newcommand{\tf}{\mathcal{F}}
\newcommand{\ca}{\mathcal{A}}
\newcommand{\cg}{\mathcal{G}}

\newcommand{\ci}{\mathcal{I}}
\newcommand{\cs}{\mathcal{S}}

\newcommand{\beq}{\begin{equation}}
\newcommand{\eeq}{\end{equation}}


\renewcommand{\mathcal}{\mathscr}

\renewcommand{\P}{\mathrm{P}}
\newcommand {\E}{{\mathrm E}}

\newcommand{\R}{\mathbb{R}}
\newcommand{\D}{\mathbb{D}}

\setlength{\evensidemargin}{0.1in}
\setlength{\oddsidemargin}{0.1in} \setlength{\textwidth}{6.2in}
\setlength{\topmargin}{-0.25in}
\setlength{\textheight}{8.8in}

\newtheorem{stat}{Statement}[section]
\newtheorem{prop}[stat]{Proposition}

\newtheorem{thm}[stat]{Theorem}
\newtheorem{lem}[stat]{Lemma}
\newtheorem{exa}[stat]{Example}
\newtheorem{defi}[stat]{Definition}
\theoremstyle{definition}
\newtheorem{remark}[stat]{Remark}
\numberwithin{equation}{section}


\begin{document}
\title{ \bf Gaussian estimates for the density of the non-linear stochastic heat equation in any space dimension}

\author{Eulalia Nualart$^{1}$ and Llu\'is Quer-Sardanyons$^{2}$}

\date{}

\maketitle

\footnotetext[1]{Institut Galil\'ee, Universit\'e
Paris 13, 93430 Villetaneuse, France. Email: \texttt{eulalia@nualart.es}, http://nualart.es}

\footnotetext[2]{{\emph{Corresponding author.}} Departament de Matem\`atiques, Universitat Aut\`onoma de Barcelona, 08193 Bellaterra (Barcelona), Spain. Tel. +34 935814542, Fax +34 935812790. Email: \texttt{quer@mat.uab.cat}}

\maketitle
\begin{abstract}
In this paper, we establish lower and upper Gaussian bounds for the probability density of the mild solution to the stochastic
heat equation with multiplicative noise and in any space dimension. The driving perturbation is a Gaussian noise which is white in time with some spatially homogeneous covariance. These estimates are obtained using tools of the Malliavin calculus. 
The most challenging part is the lower bound, which is obtained by adapting a general method developed by Kohatsu-Higa to the underlying spatially homogeneous Gaussian setting. Both lower and upper estimates have the same form: a Gaussian density with a variance which is equal to that of the mild solution of the corresponding linear equation with additive noise. 
\end{abstract}

\vskip 1,5cm {\it \noindent AMS 2010 Subject Classification:} 60H15, 60H07. \vskip 10pt

\noindent {\it Key words:} Gaussian density estimates; Malliavin calculus; spatially homogeneous Gaussian noise; stochastic heat equation. \vskip 4cm \pagebreak

\section{Introduction and main result}

In this paper, we aim to establish Gaussian lower and upper estimates for the probability density of the solution to the
following stochastic heat equation in $\red$:
\begin{equation} \label{equa1}
\frac{\partial u}{\partial t}(t,x)-\Del u(t,x)= b(u(t,x)) + \sigma(u(t,x)) \dot{W}(t,x), \; \; (t,x)\in [0,T]\times \R^d,
\end{equation}
with initial condition $u(0,x)=u_0(x)$, $x\in \red$. Here, $T>0$ stands for a fixed time horizon, the coefficients $\sigma,b:\R \rightarrow  \R$ are smooth functions and $u_0:\R^d \mapsto \R$ is assumed to be measurable and bounded. As far as the driving perturbation is concerned, we will assume that $\dot{W}(t,x)$ is a Gaussian noise which is white in time and has a spatially  homogeneous covariance.  This can be formally written as:
\begin{equation}\label{eq:corr}
\E \, [\dot{W}(t,x) \dot{W}(s,y)]= \delta(t-s) \Lam(x-y),
\end{equation}
where $\delta$ denotes the Dirac delta function at zero and $\Lam$ is some tempered distribution on $\red$ which is the Fourier transform of a non-negative tempered measure $\mu$ on $\R^d$ (the rigorous definition of this Gaussian noise will be given in Section \ref{sec:malliavin}). The measure $\mu$ is usually called the {\it{spectral measure}} of the noise $W$.

The solution to equation (\ref{equa1}) will be understood in the {\it{mild}} sense, as follows. 
Let $(\mathcal{F}_t)_{t \geq 0}$ denote the filtration generated by the spatially homogeneous noise $W$ (see again Section \ref{sec:malliavin} for its precise definition). 
We say that an $\tf_t$-adapted process $\{u(t,x),\; (t,x)\in [0,T]\times \red\}$ solves (\ref{equa1}) if it satisfies:
\begin{equation} \begin{split}
u(t,x)= (\Gam(t) \ast u_0)(x)&+ \int_0^t \int_{\red} \Gam(t-s,x-y)\sig(u(s,y)) W(ds,dy)  \\
& + \int_0^t \int_{\red} \Gam(t-s,x-y) b(u(s,y)) \, dy ds,
\label{eq:11}
\end{split}
\end{equation}
where $*$ is the standard convolution product in $\red$, and $\Gam$ denotes the fundamental solution associated to the heat equation on $\red$, that is, the Gaussian kernel of variance $2t$:
$\Gam(t,x)=(4\pi t)^{-\frac d2} \exp{\left(-\frac{\Vert x \Vert^2}{4t}\right)}$,  for $(t,x)\in \re_+\times \red$. Note that the stochastic integral on the right-hand side of (\ref{eq:11}) can be understood either in the sense 
of Walsh \cite{Walsh:86}, or using the further extension of Dalang \cite{Dalang:99} (see also \cite{Nualart:07,Dalang-Quer} for another equivalent approach). Indeed, \cite[Theorem 13]{Dalang:99} and \cite[Theorem 4.3]{Dalang-Quer} imply that equation (\ref{eq:11}) has a unique solution which is $L^2$-continuous and satisfies, for all $p\geq 1$:
\[
\sup_{(t,x)\in [0,T]\times \red} E \left[|u(t,x)|^p\right]<+\infty.
\]
Let us point out that, in the above-mentioned results, the fundamental solution $\Gam$ and the noise $W$ are related as follows:
\begin{equation}
\label{hyp1}
\Phi(T):=\int_0^T \int_{\R^d} \vert \mathcal{F}\Gamma(t) (\xi) \vert^2 \mu(d\xi) dt<+\infty.
\end{equation}
This quantity measures the variance of the stochastic integral in (\ref{eq:11}) (indeed, it is the variance itself when $\sig\equiv 1$), therefore it is natural that $\Phi(t)$ will play an important role in the Gaussian lower and upper bounds for the density of the random variable $u(t,x)$. Moreover, it has been proved in \cite[Example 2]{Dalang:99} that condition (\ref{hyp1}) is satisfied if and only if: 
\beq
\int_{\red}\frac{1}{1+\Vert \xi \Vert^2} \, \mu(d\xi)<+\infty.
\label{eq:111}
\eeq

We also remark that the stochastic heat equation (\ref{equa1}) has also been studied in the more abstract framework of Da Prato and Zabczyk \cite{Daprato:92} and, in this sense, we refer the reader to \cite{Peszat-Zabczyk} and references therein. Nevertheless, in the case of our spatially homogeneous noise, the solution in that more abstract setting could be obtained from the solution to equation (\ref{eq:11}) (see \cite[Sec. 4.5]{Dalang-Quer}).

The techniques of the Malliavin calculus have been applied to equation (\ref{eq:11}) in the papers \cite{Marquez,Nualart:07}. Precisely, \cite[Theorem 6.2]{Nualart:07} states that, 
if the coefficients $b$ and $\sigma$ are $\mathcal{C}^{\infty}$-functions with bounded derivatives of order greater than or equal to one, the diffusion coefficient is non-degenerate 
(i.e. $\vert \sigma(z) \vert \geq c >0$ for all $z\in \re$), and (\ref{eq:111}) is satisfied, 
then for each $(t,x) \in (0,T] \times \R^d$, the random variable $u(t,x)$ has a $\mathcal{C}^{\infty}$ density $p_{t,x}$ 
(see also Theorem \ref{thm:smooth-density} below). 
Moreover, in the recent paper \cite{ENualart:10}, the strict positivity of this density has been established under a $\mathcal{C}^{1}$-condition on the density and the additional condition of $\sigma$ being bounded.

Our aim in this paper is to go a step further and prove the following theorem:
 
\begin{thm} \label{maint}
Assume that condition \textnormal{(\ref{eq:111})} is satisfied and $\sigma, b \in \mathcal{C}_b^{\infty}(\R)$ ($\mathcal{C}^\infty$, bounded and bounded derivatives).
Moreover, suppose that $\vert \sigma(z) \vert \geq c >0$, for all $z \in \R$. Then, for every $(t,x)\in (0,T]\times \red$, the law of the random variable $u(t,x)$ has a $\cinf$ density $p_{t,x}$ satisfying, for all $y \in \R$:
\begin{equation*}
C_1 \Phi(t)^{-1/2} \exp \biggl( -\frac{|y-F_0|^2}{C_2 \Phi(t)}\biggr) \leq
p_{t,x}(y) \leq c_1 \Phi(t)^{-1/2} \exp \biggl( -\frac{(\vert y-F_0\vert-c_3 T)^2}{c_2 \Phi(t)}\biggr),
\end{equation*}
where $F_0=(\Gam(t) \ast u_0)(x)$
and $c_1,c_2, c_3, C_1, C_2$ are positive constants that only depend on $T$, $\sigma$ and $b$.
\end{thm}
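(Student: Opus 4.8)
The plan is to establish the upper and lower bounds separately, since they rely on quite different machinery from the Malliavin calculus. For the upper bound, I would invoke the now-classical density formula for a nondegenerate Malliavin-differentiable random variable $F=u(t,x)$, writing $p_{t,x}(y)=\E[\1_{\{F>y\}}\delta(DF\,\sigma_F^{-1})]$ (or the analogous representation obtained from the integration-by-parts formula), and then exploit the exponential martingale/Gaussian-type concentration inequality of Nualart--Quer-Sardanyons type: if the Malliavin derivative $D_{r,z}F$ is controlled pointwise by $\Gamma(t-r,x-z)$ times a bounded factor (which follows from $\sigma,b\in\mathcal{C}_b^\infty$ and the equation satisfied by $DF$), then $\|DF\|_{\mathcal{H}_t}^2\le C\,\Phi(t)$ in every $L^p$, and the tail of $F-F_0$ is sub-Gaussian with variance proportional to $\Phi(t)$. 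Combining this tail bound with the boundedness of the Sobolev norms of $\sigma_F^{-1}$ (which come from the nondegeneracy $|\sigma|\ge c$ together with the small-ball estimates for $\|DF\|^2_{\mathcal{H}_t}$, i.e. a lower bound $\|DF\|^2_{\mathcal H_t}\ge c\,\Phi(t)$ near $s=t$) yields the stated upper Gaussian bound, with the harmless shift by $c_3T$ coming from the drift term $b$.

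For the lower bound, which is the substantial part, I would follow the Kohatsu-Higa methodology adapted to the spatially homogeneous noise. The idea is to use a perturbation/localization argument: for a small parameter, one splits the noise into the increments over $[0,t-\delta]$ and over $[t-\delta,t]$, freezes the solution at time $t-\delta$, and on the last block runs a Girsanov-type change of measure that shifts $u(t,x)$ toward the target value $y$. One needs a family of auxiliary processes interpolating between $F_0$ and $y$, uniform nondegeneracy estimates for the Malliavin matrix of the perturbed random variables, and good control (in $L^p$ and in Malliavin-Sobolev norms) of the Radon--Nikodym derivatives; the key quantitative input is that on the block $[t-\delta,t]$ the conditional variance behaves like $\Phi(\delta)$, and that $\Phi$ is comparable across scales well enough to reconstruct the Gaussian profile with variance $\Phi(t)$. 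Technically this requires: (i) establishing that the relevant norms of $u$ restricted to a time block, and of its Malliavin derivatives, scale correctly with the block length; (ii) verifying the abstract hypotheses of the Kohatsu-Higa criterion in the present Hilbert-space-valued setting, in particular an exponential moment bound for the inverse Malliavin matrix; and (iii) patching the local estimate into a global one via a chaining over a geometric sequence of scales.

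I would organize the write-up as follows: first recall the Malliavin calculus framework and the explicit equations solved by the first and second Malliavin derivatives of $u(t,x)$ (already available from \cite{Nualart:07}); then prove the two-sided bounds $c\,\Phi(t)\le \|Du(t,x)\|^2_{\mathcal H_t}\le C\,\Phi(t)$ and their localized counterparts over $[t-\delta,t]$; then derive the upper bound as sketched; and finally devote the bulk of the paper to the lower bound, stating the adapted Kohatsu-Higa criterion as a separate proposition and checking its hypotheses one by one. The main obstacle I anticipate is step (ii)--(iii) of the lower-bound argument: obtaining the uniform-in-perturbation nondegeneracy and exponential-integrability estimates for the inverse Malliavin matrix of the localized, drifted random variables, because the noise is only spatially homogeneous (not space--time white), so the ``smallness'' of the last time block does not translate into smallness of a finite-dimensional Gaussian variance but must be extracted from the behaviour of $\Phi$ and of $\mathcal{F}\Gamma$ near the diagonal. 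Making the scaling of $\Phi(\delta)$ versus $\Phi(t)$ precise enough to close the Gaussian lower bound — rather than merely an exponential-type lower bound — is where the real work lies.
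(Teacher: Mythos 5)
Your overall route coincides with the paper's: the upper bound is proved exactly as you sketch (integration-by-parts density formula, exponential martingale inequality for $M_a=\int_0^a\int_{\R^d}\Gamma(t-s,x-y)\sigma(u(s,y))\,W(ds,dy)$ with $\langle M\rangle_t\leq c_2\Phi(t)$, the drift producing the shift $c_3T$, and negative-moment bounds $\E\bigl(\Vert Du(t,x)\Vert_{\mathcal{H}_t}^{-2p}\bigr)\leq C\Phi(t)^{-p}$), and the lower bound is indeed obtained by checking an adapted Kohatsu-Higa criterion. However, your account of that criterion contains concrete inaccuracies that would derail a literal implementation. No Girsanov change of measure is used anywhere, and the criterion does not require exponential moments of the inverse Malliavin matrix: hypothesis {\bf (H2b)} asks only for polynomial negative moments of the \emph{conditional} Malliavin matrix on each block, but with the precise scaling $\Delta_{n-1}(g)^{-1}$, where $\Delta_{n-1}(g)=\int_{t_{n-1}}^{t_n}\int_{\R^d}|\mathcal{F}\Gamma(t-s)(\xi)|^2\mu(d\xi)\,ds$ (Proposition \ref{prop1}). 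Moreover, the localization is not a single split at $t-\delta$ with a measure change on the last block: one works along a full partition $0=t_0<\cdots<t_N=t$, with the evolution sequence $F_n$ given by the mild-form integrals up to time $t_n$ (the kernel $\Gamma(t-s,x-y)$ kept fixed), and the local lower bound for the conditional density on each block comes from Taylor-expanding $\delta_y(\overline{F}_n)$ around the conditionally Gaussian term $I^n(h)=\int_{t_{n-1}}^{t_n}\int_{\R^d}\Gamma(t-s,x-y)\sigma(u_{n-1}(s,y))\,W(ds,dy)$ obtained by freezing $\sigma$ and $b$ at $u_{n-1}$. This forces the systematic expansion into processes and residues of arbitrary order $K$ (Lemma \ref{lemma19}), whose conditional Sobolev norms must scale like $\Delta_{n-1}(g)^{k/2}$ and $\Delta_{n-1}(g)^{(K+1)/2}$ (Lemma \ref{js} and the residue lemma, which rest on Lemma \ref{lema4} and Lemma \ref{lema14}); the chaining to the global Gaussian profile with variance $\Phi(t)$ is then performed inside the abstract Theorem \ref{t1}, not by a separate geometric-scale argument.

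The quantitative point you rightly single out as the crux --- comparing $\Phi(\delta)$ with $\Phi(t)$, i.e.\ extracting smallness of a block from the spectral measure rather than from a space--time white noise structure --- is resolved by one elementary but essential heat-kernel estimate, namely (\ref{eq:112}): $\int_{\tau_1}^{\tau_2}\int_{\R^d}|\mathcal{F}\Gamma(s)(\xi)|^2\mu(d\xi)\,ds\geq C(\tau_2-\tau_1)$. This time-homogeneous lower bound is what converts block lengths into increments of $\Phi$; it enters the proof of {\bf (H2b)} (via the choice of $\delta(\epsilon)$ in Proposition \ref{prop1}), of Lemma \ref{lema8}, and of the process/residue estimates, and it is specific to the heat equation (its absence is precisely why the lower bound does not carry over to the wave equation, cf.\ Remark \ref{rmk:heat}). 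Your item (i) corresponds to Lemma \ref{lema4} (conditional iterated-derivative bounds of order $\Phi(\delta)^{mp}$ on a block), but your plan never identifies (\ref{eq:112}), and the vaguer appeal to the behaviour of $\mathcal{F}\Gamma$ near the diagonal, together with the Girsanov/exponential-moment step, is not enough to close the argument as written.
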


One of the interests of these type of bounds is to understand the behavior of the density when $y$ is large and $t$ is small. In both cases,
one obtains the same upper and lower behavior for the density, that is, a Gaussian density with a variance which is equal to that of the stochastic integral term in the mild form of the linear equation. We observe that this variance does not depend on $x$ due to the spatially homogeneous structure of the noise.

In order to prove our main result, we will apply the techniques of the Malliavin calculus, for which we refer the reader to \cite{Nualart:06} and \cite{Marta}.  
Obtaining lower and upper Gaussian bounds for solutions to non-linear stochastic equations using the Malliavin calculus has been a current subject of research in the last twenty year. 
Precisely, the expression for the density arising from the integration-by-parts formula of the Malliavin calculus provides a direct way for obtaining an upper 
Gaussian-type bound for the density. Indeed, ones applies H\"older's inequality, and then combines the exponential martingale inequality together with estimates for the 
Malliavin norms of the derivative and the Malliavin matrix. This is a well-known method that has been applied in many situations (see for instance \cite{Guerin,Dalang:09}).
We will also apply this technique to the density of our stochastic heat equation in order to show the upper bound in Theorem \ref{maint} (see Section 5). 

On the other hand, to obtain Gaussian lower bounds for some classes of Wiener functionals turns out to be a more difficult and challenging issue. 
In this sense, the pioneering work is the article by Kusuoka and Stroock \cite{Kusuoka:87}, where the techniques of the Malliavin calculus have been applied to obtain a Gaussian lower estimate for the density 
of a uniformly hypoelliptic diffusion whose drift is a smooth combination of its diffusion coefficient. Later on, in \cite{Kohatsu:03},
Kohatsu-Higa took some of Kusuoka and Stroock's ideas and constructed a general method to prove that the density of a multidimensional functional of the Wiener sheet in $[0,T] \times \R^d$ 
admits a Gaussian-type lower bound. Then, still in \cite{Kohatsu:03}, the author applies his method to a one-dimensional stochastic heat equation in $[0,1]$ driven by the space-time white noise, 
and obtains a lower estimate for the density of the form:
\[
C_1\, t^{-\frac 14 } \exp\left(-\frac{|y-F_0|^2}{C_2\, t^{\frac 12}}\right),
\]
where $F_0$ denotes the contribution of the initial condition. This is the bound we would get if in equation (\ref{equa1}) we let $d=1$ and $\dot{W}$ be the space-time 
white noise. Indeed, this case corresponds to take $\Lam=\del$ in (\ref{eq:corr}), therefore the spectral measure $\mu$ is the Lebesgue measure on $\red$ and $\Phi(t)=C\, t^{\frac 12}$. 
As we will explain below, in the present paper we will use Kohatsu-Higa's method adapted to our spatially homogeneous Gaussian setting. 
The same author applied his method in \cite{Kohatsu:03b} to obtain Gaussian lower bounds for the density of uniformly elliptic non-homogeneous diffusions.
Another important case to which the method of \cite{Kohatsu:03} has been applied corresponds to a two-dimensional diffusion, 
which is equivalent to deal with a reduced stochastic wave equation in spatial dimension one, a problem which has been tackled in 
\cite{Dalang:04}. Moreover, let us also mention that the ideas of \cite{Kohatsu:03} have been further developed by Bally in \cite{Bally:06} in order to deal with more general diffusion processes, 
namely locally elliptic It\^o processes, and this has been applied for instance in \cite{Guerin}. Eventually, in \cite{Bally-Kohatsu} Bally and Kohatsu-Higa have recently combined their ideas in order to obtain lower bounds for the density a class of hypoelliptic two-dimensional diffusions, with some applications to mathematical finance.  

The increasing interest in finding Gaussian lower estimates for Wiener functionals has produced three very recent new approaches, 
all based again on Malliavin calculus techniques. First, in \cite{Nourdin:09} the authors provide sufficient conditions on a random variable in the Wiener space such that its density exists an admits an explicit formula, from which one can study possible Gaussian lower and upper bounds. This result has been applied in \cite{Nualart:09,Nualart:10} to our stochastic heat equation (\ref{equa1}) in the case where $\sig\equiv 1$. Precisely, \cite[Theorem 1 and Example 8]{Nualart:10} imply that, if $b$ is of class $\mathcal{C}^1$ with bounded derivative and condition (\ref{eq:111}) is fulfilled, then, for sufficiently small $t$, $u(t,x)$ has a density $p_{t,x}$ satisfying, for almost all $z\in \re$:
\[
\frac{E|u(t,x)-M_{t,x}|}{C_2\, \Phi(t)} \exp\left( -\frac{|z-M_{t,x}|^2}{C_1\, \Phi(t)} \right)\leq p_{t,x}(z) \leq 
\frac{E|u(t,x)-M_{t,x}|}{C_1\, \Phi(t)} \exp\left( -\frac{|z-M_{t,x}|^2}{C_2\, \Phi(t)} \right),
\]
where $M_{t,x}=E(u(t,x))$ (see \cite[Theorem 4.4]{Nualart:09} for a similar result which is valid for all $t$ but is not optimal). Compared to Theorem \ref{maint}, one the one hand, we point out that our result is valid for a general $\sigma$, arbitrary time $T>0$ and our estimates look somehow more Gaussian. On the other hand, the general method that we present in Section \ref{subsec:arturo} requires the underlying random variable to be smooth in the Malliavin sense, and this forces to consider a smooth coefficient $b$. We also remark that, even though the results of \cite{Nualart:10} are also valid for a more general class of SPDEs with additive noise (such as the stochastic wave equation in space dimension $d\in \{1,2,3\}$), Nourdin and Viens' method does not seem to be suitable for multiplicative noise settings. 

A second recent method for deriving Gaussian-type lower estimates for multidimensional Wiener functionals has been obtained 
by Malliavin and E.Nualart in \cite{Malliavin} (see \cite{Eulalia} for the one-dimensional counterpart). 
This technique is based on an exponential moment condition on the divergence of a covering vector field associated to the underlying Wiener functional, and has been applied in \cite{Eulalia} to a one-dimensional diffusion.

Last, but not least, in the recent paper \cite{Bally:10}, Bally and Caramellino develop another method to obtain lower bounds for multidimensional Wiener functionals based on the Riesz transform.

As we have already mentioned before, in the present paper we will apply the methodology of Kohatsu-Higa \cite{Kohatsu:03}.
For this, first we will need to extend the general result \cite[Theorem 5]{Kohatsu:03} on Gaussian lower bounds for {\it{uniformly elliptic random vectors}} from the space-time white noise framework to the case of functionals of our Gaussian spatially homogeneous noise (see Theorem \ref{t1}). This will be done in Section \ref{sec:general}, after having precisely described the Gaussian setting which we will work in.
The extension to such a general case turns out to be quite straightforward, since it essentially requires to replace norms in $L^2([0,T]\times A)$, with $A\subseteq \red$, 
by those in $L^2([0,T]; \hac)$, where $\hac$ is the Hilbert space that can be naturally associated to the spatial structure of the noise (see Section \ref{sec:malliavin} below).

In Section \ref{sec:heat}, we will recall the main results on differentiability in the Malliavin sense and existence and smoothness of the density applied to our stochastic 
heat equation (\ref{eq:11}). Moreover, we will prove a technical and useful result which provides a uniform estimate for the conditional norm of the iterated Malliavin derivative of the solution on a small time interval. 

Section \ref{sec:lower-bound} is devoted to apply the general result Theorem \ref{t1} of Section \ref{subsec:arturo} to the stochastic heat equation (\ref{eq:11}), to end up with the 
lower bound in Theorem \ref{maint}. That is, one needs to show that the solution $u(t,x)$ defines a uniformly elliptic random variable in the sense of Definition \ref{def}. 
Although the proof's structure is similar to that of \cite[Theorem 10]{Kohatsu:03}, the analysis in our setting becomes much more involved because of the spatial covariance 
structure of the underlying Wiener noise. As mentioned before, the upper bound in Theorem \ref{maint} will be proved in Section \ref{sec:upper-bound}.

Eventually, we have also added an appendix where, first, we recall some facts concerning Hilbert-space-valued stochastic and 
pathwise integrals and their conditional moment estimates, and, secondly, we state and prove a technical result which has been applied in Section \ref{sec:lower-bound}.

\medskip

As usual, we shall denote by $c, C$ any positive constants whose dependence will be clear from the context and their values may change from one line to another.

\section{General theory on lower bounds for densities}
\label{sec:general}

This section is devoted to extend Kohatsu-Higa's result \cite[Theorem 5]{Kohatsu:03} on lower bounds for the density of a uniformly elliptic random vector to a more general Gaussian space, namely the one determined by a Gaussian random noise on $[0,T]\times \red$ which is white in time and has a non-trivial homogeneous structure in space. For this, first we will rigorously introduce the Gaussian noise and the Malliavin calculus framework associated to it and needed in the sequel.

\subsection{Gaussian context and Malliavin calculus}
\label{sec:malliavin}

Our spatially homogeneous Gaussian noise is described as follows. On a complete probability space $(\Omega, \mathcal{F}, \P)$,
let $W=\{W(\varphi), \varphi \in \mathcal{C}^{\infty}_0(\re_+ \times \R^{d})\}$ be a zero mean Gaussian family
of random variables indexed by $\mathcal{C}^\infty$ functions with compact support with covariance functional given by
\begin{equation}
\E \left[ W(\varphi) W(\psi) \right]= \int_0^\infty dt \int_{\R^d} \Lam(dx) \left( \varphi(t,\star)*\tilde \psi(t,\star)\right)(x), \quad \varphi, \psi\in \cinf_0(\re_+\times \red).
\label{eq:00}
\end{equation}
Here, $\Lam$ denotes a non-negative and non-negative definite tempered measure on $\R^d$, $*$ stands for the convolution product, the symbol $\star$ denotes the spatial variable and $\tilde \psi(t,x):=\psi(t,-x)$. For such a Gaussian process to exist, it is necessary and sufficient that the covariance functional is non-negative definite and this is equivalent to the fact that $\Lam$ is the Fourier transform of a non-negative tempered measure $\mu$ on $\R^d$ (see \cite[Chap. VII, Th\'eor\`eme XVII]{Schwartz}). The measure $\mu$ is usually called the spectral measure of the noise $W$. By definition of the Fourier transform of tempered distributions, $\Lam =\tf \mu$ means that, for all $\phi$ belonging to the space $\cs(\red)$ of rapidly decreasing $\cinf$ functions,
\[
\int_{\red} \phi(x)\Lam(dx)=\int_{\red} \tf \phi(\xi) \mu(d\xi).
\]
Moreover, for some integer $m \geq 1$ it holds that
$$
\int_{\R^d} \frac{\mu(d\xi)}{(1+\Vert \xi \Vert^2)^m} < +\infty.
$$
Elementary properties of the convolution and Fourier transform show that
covariance (\ref{eq:00}) can be written in terms of the measure $\mu$, as follows:
\begin{equation*}
\E \left[ W(\varphi) W(\psi) \right]= \int_0^\infty \int_{\R^d}
\mathcal{F} \varphi(t)(\xi) \overline{\mathcal{F} \psi(t) (\xi)} \mu(d\xi) dt.
\end{equation*}
In particular, we obtain that
\begin{equation*}
\E \left[W(\varphi)^2\right] = \int_0^\infty \int_{\R^d}
|\mathcal{F} \varphi(t)(\xi)|^2 \mu(d\xi) dt.
\end{equation*}

\begin{exa}
Assume that the measure $\Lam$ is absolutely continuous with respect to Lebesgue measure on $\red$ with density $f$. Then, the covariance functional \textnormal{(\ref{eq:00})} reads
\[
\int_0^\infty dt \int_{\red} dx \int_{\red} dy\,  \ffi(t,x) f(x-y) \psi(t,y),
\]
which clearly exhibits the spatially homogeneous nature of the noise. The space-time white noise would correspond to the case where $f$ is the Dirac delta at the origin. 
\end{exa}

We note that the above-defined kind of noise has been widely used as a random perturbation for several classes of SPDEs (see for instance \cite{Millet-Sanz,Dalang:99,Peszat-Zabczyk,Marta}).

\smallskip

At this point, we can describe the Gaussian framework which is naturally associated to our noise $W$. Precisely, 
let $\mathcal{H}$ be the completion of the Schwartz space $\mathcal{S}(\R^{d})$ endowed with the semi-inner product
\begin{equation*}
\langle \phi_1, \phi_2 \rangle_{\mathcal{H}}= \int_{\R^d}  (\phi_1 * \tilde{\phi_2})(x) \, \Lam(dx) =
\int_{\R^d} \mathcal{F} \phi_1(\xi) \overline{\mathcal{F} \phi_2 (\xi)} \, \mu(d\xi), \quad \phi_1, \phi_2 \in \mathcal{S}(\R^{d}).
\end{equation*}
Notice that the Hilbert space $\mathcal{H}$ may contain distributions (see \cite[Example 6]{Dalang:99}). Fix $T>0$ and define  $\mathcal{H}_T=L^2([0,T]; \mathcal{H})$.
Then, the family $W$ can be extended to $\mathcal{H}_T$, so that we end up with a family of centered Gaussian random variables, still denoted by $W=\{W(g), g \in \mathcal{H}_T\}$, satisfying that
$\E[W(g_1)W(g_2)]=\langle g_1,g_2 \rangle_{\mathcal{H}_T}$, for all $g_1,g_2 \in \mathcal{H}_T$ (see for instance \cite[Lemma 2.4]{Dalang-Quer} and the explanation thereafter). 

The family $W$ defines as isonormal Gaussian process on the Hilbert space $\hact$ and we shall use the differential Malliavin calculus based on it (see, for instance, \cite{Nualart:06, Marta}). We denote the Malliavin derivative by
$D$, which is a closed and unbounded operator defined in $L^2(\Om)$ and taking values in  $L^2(\Omega;
\mathcal{H}_T)$, whose domain is denoted by $\mathbb{D}^{1,2}$. More general, for any $m \geq 1$, the domain of the iterated
derivative $D^m$ in $L^p(\Om)$  is denoted by $\mathbb{D}^{m,p}$, for any $p \geq 2$, and we recall that $D^m$ takes values in $L^p(\Omega; \mathcal{H}_T^{\otimes m})$. As usual, we set
$\mathbb{D}^{\infty}= \cap_{p \geq 1} \cap_{m \geq 1}
\mathbb{D}^{m,p}$. The space $\mathbb{D}^{m,p}$ can also be seen as the completion of the set of smooth functionals with respect to the semi-norm
\[
\|F\|_{m,p}=\Big\{\E \left[\vert F \vert^p\right]+\sum_{j=1}^m \E\left[\Vert D^j F \Vert^p_{\hact^{\otimes j}}\right]\Big\}^{\frac 1p}.
\]
For any differentiable random
variable $F$ and any $r=(r_1,...,r_m) \in [0,T]^m$, $D^m F(r)$ is an
element of $\mathcal{H}^{\otimes m}$ which will be denoted by
$D^m_r F$.

We define the Malliavin matrix of a $k$-dimensional random vector $F \in
(\mathbb{D}^{1,2})^{k}$ by $\gamma_F=(\langle DF_i, D F_j
\rangle_{\mathcal{H}_T})_{1\leq i,j\leq k}$.
We will say that a $k$-dimensional random vector $F$ is smooth if each of its components belongs to $\D^{\infty}$, and we will say that a smooth random vector $F$ is non-degenerate if
$(\text{det } \gamma_F)^{-1} \in \cap_{p \geq 1} L^p(\Omega)$.
It is well-known that a non-degenerate random vector has a $\mathcal{C}^{\infty}$
density (cf. \cite[Theorem 2.1.4]{Nualart:06}).

Let $(\mathcal{F}_t)_{t \geq 0}$ denote the $\sigma$-field generated
by the random variables $\{W_s(h), h \in \mathcal{H}, 0 \leq s
\leq t\}$ and the $\P$-null sets, where $W_t(h):=W(1_{[0,t]}h)$,
for any $t \geq 0$, $h \in \mathcal{H}$. Notice that this family defines a standard cylindrical Wiener process on the Hilbert space $\hac$. We define the predictable
$\sigma$-field as the $\sigma$-field in $\Omega \times [0,T]$
generated by the sets $\{ (s,t] \times A, 0\leq s<t \leq T, A \in
\mathcal{F}_s \}$.

As in \cite[Section 2]{Kohatsu:03}, one can define the conditional versions of the above Malliavin norms and spaces
(see also \cite{Moret:01, Dalang:04}). For all $0\leq s <t \leq T$, we set $\mathcal{H}_{s,t}=L^2([s,t]; \mathcal{H})$. For any integer $m \geq 0$ and $p>1$, we define the seminorm:
\begin{equation*}
\Vert F \Vert_{m,p}^{s,t}=\Big\{\E_s[\vert F \vert^p]+\sum_{j=1}^m \E_s\left[\Vert D^j F \Vert^p_{\mathcal{H}_{s,t}^{\otimes j}}\right]\Big\}^{\frac 1p},
\end{equation*}
where $\E_s[\cdot]=\E[\cdot \vert \mathcal{F}_s ]$. We will also write $\P_s\{\cdot\}=\P\{\cdot \vert \mathcal{F}_s \}$. Completing the space of smooth functionals with respect to this seminorm we obtain the space $\D^{m,p}_{s,t}$. We write $L^p_{s,t}(\Omega)$ for $\D^{0,p}_{s,t}$.
We say that $F \in \overline{\D}^{m,p}_{s,t}$ if $F \in \D^{m,p}_{s,t}$ and
$\Vert F \Vert_{m,p}^{s,t} \in \cap_{q \geq 1} L^q(\Omega)$. If we let now $F$ be a $k$-dimensional random vector, we define its associated conditional Malliavin matrix as $\gamma_F^{s,t}=(\langle DF_i, D F_j
\rangle_{\mathcal{H}_{s,t}})_{1\leq i,j\leq k}$.
We say that $F=(F_1,\dots,F_k)$ is conditionally non-degenerate in $[s,t]$ if $F_i \in \overline{\D}^{\infty}_{s,t}=\cap_{m \geq 0, p>1} \overline{\D}^{m,p}_{s,t}$,
for all $i=1,\dots,k$, and $(\text{det } \gamma_F^{s,t})^{-1} \in \cap_{p \geq 1} L_{s,t}^p(\Omega)$.

\subsection{The general result}
\label{subsec:arturo}

In order to state the main result of this section, we need to define what we understand by a uniformly elliptic random vector in our context.
\begin{defi} \label{def}
Let $F$ be a smooth ${\mathcal{F}}_t$-measurable $k$-dimensional random vector.
We say that $F$ is uniformly elliptic if there exists an element $g \in \hact$ such that $\|g(s)\|_\hac >0$ for almost all $s$, and
an $\epsilon>0$ such that,
for any sequence of partitions $\pi_N=\{0=t_0<t_1<\cdots<t_N=T\}$ whose norm is smaller than $\epsilon$
and
$\Vert \pi_N \Vert= \sup \{t_n-t_{n-1}, \, n=1,...,N\}$ converges to zero as $N \rightarrow \infty$,
the following conditions are satisfied:

Define
\begin{equation*}
0< \Delta_{n-1}(g):=\int_{t_{n-1}}^{t_n} \Vert g(s) \Vert^2_{\mathcal{H}} ds
< \infty, \; \; n=1,...,N.
\end{equation*}
\begin{itemize}
\item[\textnormal{\bf (H1)}] (Approximation property) There exists a sequence of smooth random vectors
$F_0,F_1$,...,$F_N$ such that $F_N=F$, $F_n$ is ${\mathcal{F}}_{t_n}$-measurable and, for any $n=1,\dots, N$, $F_n$ is  
conditionally non-degenerate in $[t_{n-1}, t_n]$. Moreover, for some $\gamma>0$ and for each $F_n$ and
$K \in \mathbb{N}$, there exists a random vector 
$\overline{F}_n = \overline{F}_n^K$ of the form
\begin{equation*}
\overline{F}_n = \Delta_{n-1} (g)^{(K+1) \gamma} Z_n+ F_{n-1}+ I^n(h) + G_n^K,
\end{equation*}
where the various terms in this formula and the relationship with $F_n$ are as follows.
$(Z_n, \, n=1,\dots,N)$ is an i.i.d. sequence of $k$-dimensional $N(0,Id)$
random vectors which are independent of the Gaussian family $W$, where $Id$ denotes the $k$-dimensional identity matrix. 
The random vectors $G_n^K$ are ${\mathcal{F}}_{t_n}$-measurable and belong to $\overline{\D}^{\infty}_{t_{n-1}, t_n}$. Eventually, $I^n(h)$ denotes a random vector whose components are of the form
$$(I^n(h))^{\ell}=\int_{t_{n-1}}^{t_n} \int_{\R^d} h^{\ell}(s,y) W(ds,dy), \quad \ell=1,\dots,k,$$
where, for each $\ell$, $h^{\ell}$ is a smooth ${\mathcal{F}}_{t_{n-1}}$-predictable processes with values in $\mathcal{H}_{t_{n-1},t_n}$ and, for any $m \in \mathbb{N}$ and $p \geq 1$, there exists a constant $C$ such that:
\begin{equation*}
\Vert F_n \Vert_{m,p}+ \sup_{\omega \in \Omega} \Vert h^{\ell} \Vert_{\mathcal{H}_{t_{n-1},t_n}}
(\omega) \leq C,
\end{equation*}
for any $\ell=1,\dots,k$. 

Furthermore, the following four conditions are
satisfied for the approximating sequence $\overline{F}_n$, for all $n=1,\dots,N$:
\item[\textnormal{\bf (H2a)}] For any $m \in \mathbb{N}$ and $p \geq 1$:
$$\Vert F_n-\overline{F}_n \Vert_{m,p}^{t_{n-1},t_n}
\leq C\, \Delta_{n-1}(g)^{(K+1)\gamma}\qquad \text{a.s.}$$
\item[\textnormal{\bf (H2b)}] For any $p>1$:
\begin{equation*}
\{\E_{t_{n-1}}[ \textnormal{det} (\gamma^{t_{n-1},t_n}_{F_n})^{-p} ]\}^{1/p}
\leq C\, \Delta_{n-1} (g)^{-k} \qquad \text{a.s.}
\end{equation*}
\item[\textnormal{\bf (H2c)}] Let $A=(a_{\ell,q})$ be the $k \times k$ matrix defined by
\[
a_{\ell,q}= \Delta_{n-1}(g)^{-1} \int_{t_{n-1}}^{t_n}
\langle h^{\ell}(s), h^q(s) \rangle_{\mathcal{H}} ds.
\]
There exist strictly positive constants $C_1$ and $C_2$ such that, for all $\zeta \in \mathbb{R}^k$,
\begin{equation*}
C_1 \zeta^{T} \zeta \geq \zeta^{T} A \zeta \geq C_2 \zeta^{T} \zeta, \quad \text{a.s.}
\end{equation*}
\item[\textnormal{\bf (H2d)}] There exists $\delta >0$ such that, for all $m\in \mathbb{N}$ and $p\geq 1$:
\begin{equation*}
\Vert G_n^K \Vert^{t_{n-1},t_n}_{m, p}\leq C\, \Delta_{n-1} (g)^{1/2+\delta} \qquad \text{a.s.}
\end{equation*}
\end{itemize}
\end{defi}
\vskip 12pt

In hypothesis {\bf{(H1)}} above, the Gaussian random variable $Z_n$ is indeed defined as follows. One enlarges the underlying probability space in order to include the increments of another cylindrical Wiener process $\{\overline{W}_t(h),\, t\geq 0, h\in \hac\}$ independent of $W$. Then, $Z_n=\overline{W}_{n+1}(h)-\overline{W}_n(h)$, where $h$ is any element of a complete orthonormal system on $\hac$. In this case, the expanded filtration is given by $\overline{{\mathcal{F}}}_{t}={\mathcal{F}}_{t} \vee \sigma\{ \overline{W}_s(h), \, s \leq n+1, t_n \leq t, h\in \hac\}$. All the norms and expectations considered so far have to be understood in the enlarged probability space. We remark that, as it has been explained in \cite[p. 436]{Kohatsu:03} for the space-time white noise case, 
the introduction of the term $\Delta_{n-1} (g)^{(K+1) \gamma} Z_n$ is necessary in order to ensure that $\overline{F}_n$ has a density.

On the other hand, note that condition {\bf (H2c)} is the ingredient that most directly reflects the {\it{uniformly elliptic}}
condition for a random vector on the Wiener space. 

\medskip

The next theorem establishes a Gaussian lower bound for the probability density of
a uniformly elliptic random vector. Its proof follows exactly the same steps as that of Theorem 5 in \cite{Kohatsu:03}, where the same result has been proved in a Gaussian setting associated to the Hilbert space $L^2([0,T]\times A)$, where $A\subseteq \red$ (that is, the space-time white noise). In our case, one essentially needs to replace the norms in the latter space by those in $\hact=L^2([0,T];\hac)$, in such a way that the proof follows with minimal changes. Nevertheless, for the sake of completeness, we believe that it is worth reminding the main steps of the proof of this theorem, for the reader will have a much clearer understanding of the long list of assumptions stated above.

\vskip 12pt
\begin{thm}\label{t1}
Let $F$ be a $k$-dimensional uniformly elliptic random vector and denote by $p_F$ its probability density. Then,
there exists a constant $M>0$ that depends on all the constants of Definition \ref{def}
such that:
\begin{equation*}
p_{F}(y) \geq M \Vert g \Vert_{\mathcal{H}_t}^{-k/2}
\exp \biggl(-\frac{\Vert y-F_0 \Vert^2}{M\Vert g \Vert^2_{\mathcal{H}_t}} \biggr),
\; \; \text{for all} \; y \in \mathbb{R}^k,
\end{equation*}
where $F_0$ is the first element in the sequence of hypothesis {\bf{(H1)}}.
\end{thm}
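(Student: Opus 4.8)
The plan is to prove Theorem \ref{t1} by adapting the iterative conditioning scheme of \cite[Theorem 5]{Kohatsu:03} to the Hilbert-space-valued setting $\hact=L^2([0,T];\hac)$. The key observation is that the entire argument is built on conditional expectations given $\mathcal{F}_{t_n}$ along the partition $\pi_N$, and on estimates of Malliavin norms and Malliavin matrices relative to the intervals $[t_{n-1},t_n]$; none of these constructions care about the internal structure of the ``spatial'' Hilbert space, so that norms in $L^2([0,T]\times A)$ can be systematically replaced by norms in $\hact$. I would therefore first fix a partition $\pi_N$ with $\|\pi_N\|<\e$ as in Definition \ref{def}, and write, for $y\in\R^k$,
\begin{equation*}
p_F(y)=\E\big[\,p_{F\mid\mathcal{F}_{t_{N-1}}}(y)\,\big],
\end{equation*}
reducing the global lower bound to a one-step conditional lower bound for the density of $F_N=F$ given $\mathcal{F}_{t_{N-1}}$, iterated backwards along the partition.

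The core of the argument is the following one-step estimate, to be carried out for each $n$: conditionally on $\mathcal{F}_{t_{n-1}}$, one shows that $F_n$ has a density at a point $z$ bounded below by
\begin{equation*}
c\,\Delta_{n-1}(g)^{-k/2}\exp\Big(-\frac{\|z-F_{n-1}\|^2}{c\,\Delta_{n-1}(g)}\Big),
\end{equation*}
uniformly over a suitable region. The mechanism is: (i) replace $F_n$ by the approximation $\overline{F}_n=\Delta_{n-1}(g)^{(K+1)\gam}Z_n+F_{n-1}+I^n(h)+G_n^K$ of {\bf(H1)}, controlling the error via {\bf(H2a)} and a density-comparison lemma (the density of $F_n$ and that of $\overline{F}_n$ are close in a weighted sense because their Malliavin norms are close and $F_n$ is conditionally non-degenerate by {\bf(H2b)}); (ii) observe that, conditionally on $\mathcal{F}_{t_{n-1}}$, the vector $I^n(h)$ is centered Gaussian with covariance $\Delta_{n-1}(g)A$, which by {\bf(H2c)} is comparable to $\Delta_{n-1}(g)\,Id$, so $F_{n-1}+I^n(h)$ already has a Gaussian density of the right order; (iii) absorb the extra noise term $\Delta_{n-1}(g)^{(K+1)\gam}Z_n$ (harmless, it only helps to have a density, and is of higher order in $\Delta_{n-1}(g)$ for $K$ large) and the remainder $G_n^K$ via {\bf(H2d)}, which makes $G_n^K$ of order $\Delta_{n-1}(g)^{1/2+\del}$, hence negligible compared to the Gaussian fluctuation of size $\Delta_{n-1}(g)^{1/2}$; then choose $K$ large enough (depending on $\gam,\del,k$) that all error contributions are dominated. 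Here the Hilbert-space-valued stochastic and pathwise integral estimates recalled in the appendix are what replaces the scalar Walsh-integral estimates of \cite{Kohatsu:03}.

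Next I would run the backward induction: define $\Psi_n(z)$ to be the conditional density of $F$ given $\mathcal{F}_{t_n}$ evaluated at $z$, and use the one-step estimate together with the Chapman--Kolmogorov-type identity
\begin{equation*}
p_{F\mid\mathcal{F}_{t_{n-1}}}(y)=\E_{t_{n-1}}\big[\,p_{F\mid\mathcal{F}_{t_n}}(y)\,\big]
\end{equation*}
to propagate a Gaussian lower bound with variance accumulating as $\sum_{j=n}^{N}\Delta_{j-1}(g)$. The convolution of the successive Gaussian one-step kernels telescopes: the product of lower bounds, after integrating out the intermediate values, yields a single Gaussian in $y-F_0$ with variance a constant multiple of $\sum_{n=1}^N\Delta_{n-1}(g)=\int_0^T\|g(s)\|_\hac^2\,ds=\|g\|_{\hact}^2=\|g\|_{\mathcal{H}_t}^2$, and normalizing constant of order $\|g\|_{\mathcal{H}_t}^{-k/2}$. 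This gives exactly the claimed bound with a constant $M$ depending only on the constants appearing in Definition \ref{def}; since the estimate holds for every admissible partition and the right-hand side is partition-independent, letting $\|\pi_N\|\to0$ is not even needed, though it confirms consistency.

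The main obstacle I expect is step (i)--(iii) of the one-step estimate, i.e. making rigorous the comparison between the density of $F_n$ and the explicit Gaussian-plus-remainder structure of $\overline{F}_n$ in the conditional setting: one must control, uniformly in $\om$ and in the base point, the conditional density of a perturbed Gaussian using integration by parts on the Wiener space restricted to $[t_{n-1},t_n]$, with all Malliavin-calculus estimates (Sobolev-type inequalities for densities, bounds on $(\det\gam_{F_n}^{t_{n-1},t_n})^{-1}$, norms of $DF_n$) expressed through $\hact$-norms rather than $L^2([0,T]\times A)$-norms. The bookkeeping of how the powers of $\Delta_{n-1}(g)$ combine — ensuring the remainder is $o(\Delta_{n-1}(g)^{1/2})$ and the additive $Z_n$-term is $o$ of that for $K$ large — is delicate but, as emphasized in the excerpt, mirrors \cite{Kohatsu:03} with only cosmetic changes once the appendix's conditional moment estimates for $\hac$-valued integrals are in place.
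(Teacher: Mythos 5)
Your overall strategy is the same as the paper's (which is itself a sketch of Kohatsu-Higa's Theorem 5 transported to $\hact$): a one-step lower bound for the conditional density built from the decomposition $\overline{F}_n=\Delta_{n-1}(g)^{(K+1)\gamma}Z_n+F_{n-1}+I^n(h)+G_n^K$, with \textbf{(H2c)} giving the non-degenerate Gaussian main term, \textbf{(H2d)} making $G_n^K$ negligible, \textbf{(H2a)} controlling the replacement of $F_n$ by $\overline{F}_n$, followed by an iteration along the partition via conditional densities (Fubini plus a localization argument). However, there is one step which, as you justify it, would fail. In your step (i) you argue that the densities of $F_n$ and $\overline{F}_n$ are comparable ``because their Malliavin norms are close and $F_n$ is conditionally non-degenerate by \textbf{(H2b)}''. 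The actual estimate of $\E_{t_{n-1}}[\delta_y(F_n)-\delta_y(\overline{F}_n)]$ is obtained by an integration-by-parts argument along the interpolation $\overline{F}_n(\rho)=\rho F_n+(1-\rho)\overline{F}_n$, and it requires the uniform bound $\sup_{\rho\in[0,1]}\{\E_{t_{n-1}}[\det(\gamma^{t_{n-1},t_n}_{\overline{F}_n(\rho)})^{-p}]\}^{1/p}\leq C\,\Delta_{n-1}(g)^{-k}$ (Step 2 of the paper's proof, i.e.\ Kohatsu-Higa's Proposition 12). Non-degeneracy of $F_n$ alone gives no control on the inverse Malliavin matrix of the convex combination, and this interpolation estimate is precisely the ``very technical'' ingredient the paper singles out; it must be stated and used, not absorbed into a generic ``density-comparison lemma''.

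A second, less serious, imprecision: the one-step estimate is not a global Gaussian bound but a local one — a constant lower bound of order $\Delta_{n-1}(g)^{-k/2}$, minus an error term $C_{n-1}\Delta_{n-1}(g)^{\alpha}$, valid only for $y$ with $\|y-F_{n-1}\|\leq c\,\Delta_{n-1}(g)^{1/2}$ and $\Delta_{n-1}(g)\leq\eta$. Consequently the Gaussian factor in $y-F_0$ does not come from ``telescoping convolutions'' of one-step Gaussian kernels; it emerges in Step 4 from chaining these local bounds along a sequence of balls joining $F_0$ to $y$, which forces a choice of partition (the number of steps and the sizes $\Delta_{n-1}(g)$) adapted to $\|y-F_0\|$, together with a localization to control the error terms $C_{n-1}\Delta_{n-1}(g)^{\alpha}$. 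Your closing remark that the choice of partition is irrelevant is therefore misleading: the hypotheses are required to hold for every sufficiently fine partition exactly so that this $y$-dependent choice is available.
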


\noindent {\it{Sketch of the proof}}. The proof is divided in four steps.

\noindent {\it Step 1.} First, as we have already mentioned, the fact that in the definition of $\overline{F}_n$ we include the term involving the random variable $Z_n$ let us prove that $\overline{F}_n$ is conditionally non-degenerate in $[t_{n-1},t_n]$.  Therefore, it has a smooth conditional density with respect to
${\mathcal{F}}_{t_{n-1}}$. 

Next, one proves that assumptions {\bf (H2a)}-{\bf (H2d)} imply
the following local lower bound for this conditional
density: there exist constants $M, c$ and $\eta$ such that,
if $\Delta_{n-1} (g)\leq \eta$ and $y \in A_n=\{ y \in \R^k: \Vert y-F_{n-1} \Vert \leq c \, \Delta_{n-1} (g)^{1/2}\}$, then
\begin{equation} \label{aux1}
\E_{t_{n-1}} [\delta_y(\overline{F}_n)] \geq \frac{1}{M \Delta_{n-1} (g)^{k/2}} \; \; \text{a.s.},
\end{equation}
where $\delta_y$ denotes the Dirac delta function. 
This is proved using the expression for the density
that follows from the integration-by-parts formula of the Malliavin calculus.
After normalizing this density (i.e. dividing by $\Delta_{n-1} (g)^{1/2}$),
the Taylor expansion of the delta function around the non-degenerate random vector
$\Delta_{n-1} (g)^{-1/2} I^n(h)$ is considered. Condition {\bf (H2d)} yields that the terms
of higher order, that is, those that are concentrated in the smooth random vector $G^K_n$, are
of order $\Delta_{n-1} (g)^{\delta}$, so that they are negligible
with respect to the first term $I^n(h)$. Using the fact that {\bf (H2c)}
is equivalent to saying that the density of
the term $I^n(h)$ corresponds to a non-degenerate Gaussian random variable,
the lower bound (\ref{aux1}) for the conditional density of $\overline{F}_n$ is obtained.
\vskip 12pt
\noindent {\it Step 2.} The second step consists in proving that, if $\overline{F}_n(\rho):=\rho F_n+(1-\rho)\overline{F_n}$, $\rho\in [0,1]$, then for any $p>1$ there exists a constant $C$ such that:
\begin{equation} \label{aux2}
\sup_{\rho \in [0,1]} \{\E_{t_{n-1}}[ \textnormal{det} (\gamma^{t_{n-1},t_n}_{\overline{F}_n(\rho)})^{-p} ]\}^{1/p}
\leq C\, \Delta_{n-1} (g)^{-k} \qquad \text{a.s.}
\end{equation}
The proof of this inequality is very technical (see \cite[Proposition 12]{Kohatsu:03}), but (\ref{aux2}) is an important ingredient, together with (\ref{aux1}), to end up with a lower bound for the conditional
density of $F_n$ with respect to ${\mathcal{F}}_{ t_{n-1}}$. This is explained in the next step.

\vskip 12pt
\noindent {\it Step 3.} Next, one shows that {\bf (H2a)}, (\ref{aux1}) and (\ref{aux2}) imply that 
there exist positive constants $c, M, \alpha>1, \eta$, and random variables $C_n \in \overline{{\mathcal{F}}}_{t_n}$,
$n=0,...,N-1$, satisfying that $\sup_{n=0,...,N} \E(\vert C_n \vert) \leq M$, such that if $\Delta_{n-1} (g)\leq \eta$ and $y \in A_n$, then
\begin{equation} \label{aux3}
\E_{t_{n-1}} [\delta_y(F_n)] \geq \frac{1}{M \Delta_{n-1} (g)^{k/2}}-C_{n-1}(\omega)  \Delta_{n-1} (g)^{\alpha}, 
\end{equation}
for almost all $w \in \Omega$. 
This lower bound is proved by writing 
\begin{equation*}
\E_{t_{n-1}} [\delta_y(F_n)] \geq \frac{1}{M \Delta_{n-1} (g)^{k/2}}+\E_{t_{n-1}} [\delta_y(F_n)-\delta_y(\overline{F}_n)], 
\end{equation*}
and then finding an upper bound of the second term on the right-hand side using {\bf (H2a)} and (\ref{aux2}).

\vskip 12pt
\noindent {\it Step 4.} Finally, one concludes the desired lower bound of Theorem \ref{t1} using (\ref{aux3}) as in \cite[Theorem 2]{Kohatsu:03}. The main idea is to use Fubini's theorem to write
$$
\E[\delta_y(F)]=\int_{\R^k} \cdots \int_{\R^k} \E[\delta_y(F) \delta_{y_{N-1}}(F_{N-1}) \cdots  \delta_{y_1}(F_1)]
d y_1 \cdots dy_{N-1}.
$$
Then one iteratively applies the lower bound (\ref{aux3}) using conditional densities, and finally use a localization procedure to conclude the proof. 
\hfill $\square$

\section{The stochastic heat equation}
\label{sec:heat}

In this section, we will recall some known facts about the stochastic heat equation on $\red$ which will be needed in the sequel.
We will also prove an estimate involving the iterated Malliavin derivative of the solution which, as far as we know, does not seem to exist in the literature (see Lemma \ref{lema4} below).

\medskip

We remind that the mild solution to the stochastic heat equation (\ref{equa1}) is given by the $\tf_t$-adapted process $\{u(t,x),\, (t,x)\in [0,T]\times \red\}$ that satisfies:
\begin{equation} \begin{split}
u(t,x)= (\Gam(t) \ast u_0)(x)&+ \int_0^t \int_{\red} \Gam(t-s,x-y)\sig(u(s,y)) W(ds,dy)  \\
& + \int_0^t \int_{\red} \Gam(t-s,x-y) b(u(s,y)) \, dy ds,
\label{eq:011}
\end{split}
\end{equation}
where $\Gam(t)$ is the Gaussian kernel with variance $2t$ and the following condition is fulfilled:
\[
\Phi(T)=\int_0^T \int_{\red} |\tf \Gam(t)(\xi)|^2\, \mu(d\xi) dt <\infty.
\]
As mentioned in the Introduction, this is equivalent to say that $\int_{\red} (1+\Vert \xi \Vert^2)^{-1} \mu(d\xi) <\infty$.  
Moreover, similar to \cite[Lemma 3.1]{Marquez}, one easily proves that, for all $0\leq \tau_1 <\tau_2\leq T$:
\beq
C (\tau_2-\tau_1) \leq \int_{\tau_1}^{\tau_2} \int_{\R^d} \vert \mathcal{F}\Gamma(t) (\xi) \vert^2 \mu(d \xi) dt,
\label{eq:112}
\eeq
for some positive constant $C$ depending on $T$.

\medskip

In order to apply the techniques of the Malliavin calculus to the solution of (\ref{eq:11}), let us consider the Gaussian context described in Section \ref{sec:malliavin}. That is, let $\{W(h),\; h\in \hact\}$ be the isonormal Gaussian process on the Hilbert space $\hact=L^2([0,T];\hac)$ defined therein.
Then, the following result is a direct consequence of \cite[Proposition 2.4]{Marquez}, \cite[Theorem 1]{Sanz:04} and \cite[Proposition 6.1]{Nualart:07}.
For the statement, we will use the following notation: for any $m\in \mathbb{N}$, set $\bar s:=(s_1,\dots,s_m)\in [0,T]^m$, $\bar z:=(z_1,\dots,z_m)\in (\red)^m$,
$\bar s(j):=(s_1,\dots,s_{j-1},s_{j+1},\dots,s_m)$ (resp. $\bar z(j)$), and, for any function $f$ and variable $X$ for which it makes sense, set
\[
\Del^m(f,X):= D^mf(X)-f'(X)D^mX.
\]
Note that $\Del^m(f,X)=0$ for $m=1$ and, if $m>1$, it only involves iterated Malliavin derivatives up to order $m-1$.

\begin{prop}\label{prop:dinf}
Assume that \textnormal{(\ref{eq:111})} is satisfied and $\sigma, b \in \mathcal{C}^{\infty}(\R)$ and their derivatives of order greater than or equal to one are bounded.
Then, for all $(t,x)\in [0,T]\times \red$, the random variable $u(t,x)$ belongs to $\D^\infty$. Furthermore, for any $m\in \N$ and $p\geq 1$, the iterated Malliavin derivative $D^mu(t,x)$ satisfies the following equation in $L^p(\Om;\hact^{\otimes m})$:
\begin{equation} \begin{split}
D^mu(t,x)& = Z^m(t,x)  \\
& + \int_0^t\int_{\red} \Gam(t-s,x-y) [\Del^m(\sig,u(s,y))+D^m u(s,y) \sig'(u(s,y))] W(ds,dy)  \\
& + \int_0^t\int_{\red} \Gam(t-s,x-y) [\Del^m(b,u(s,y))+D^m u(s,y) b'(u(s,y))]\, dy ds,
\label{eq:113}
\end{split}
\end{equation}
where $Z^m(t,x)$ is the element of $L^p(\Om;\hact^{\otimes m})$ given by
\[
Z^m(t,x)_{\bar s,\bar z}= \sum_{j=1}^m \Gam(t-s_j,x-z_j) D^{m-1}_{\bar s(j), \bar z(j)} \sig(u(s_j,z_j)).
\]
\end{prop}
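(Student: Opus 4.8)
The plan is to prove Proposition~\ref{prop:dinf} by induction on the order $m$ of the Malliavin derivative, combining the standard machinery for stochastic PDEs with homogeneous noise (Picard iteration, $L^p$-estimates for Hilbert-space-valued stochastic and pathwise integrals) with the chain rule for the Malliavin derivative. First I would recall that membership $u(t,x)\in\D^\infty$ and the case $m=1$ of equation~\eqref{eq:113} are already available: they follow from \cite[Proposition 2.4]{Marquez}, \cite[Theorem 1]{Sanz:04} and \cite[Proposition 6.1]{Nualart:07}, which is exactly how the statement is advertised. So the real content is to identify the equation satisfied by $D^m u(t,x)$ for $m\ge 2$ and to check it is well posed in $L^p(\Omega;\hact^{\otimes m})$.

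For the inductive step, I would assume $u(t,x)\in\D^{m,p}$ for all $p\ge 1$ and that $D^{m-1}u$ satisfies the analogous equation, and then apply the operator $D^m$ formally to the mild formulation~\eqref{eq:011}. The deterministic term $(\Gam(t)\ast u_0)(x)$ is non-random, so it contributes nothing (hence $Z^m$ only collects the $j$-th term coming from differentiating $\sig(u(s_j,z_j))$ in the stochastic integral). For the stochastic integral $\int_0^t\int_{\red}\Gam(t-s,x-y)\sig(u(s,y))\,W(ds,dy)$, one uses that $D^m$ commutes with the stochastic integral (in the appropriate sense, e.g.\ \cite[Lemma 2.4 / Prop.~6.1]{Nualart:07} or the commutation rules in \cite{Nualart:06}), producing two pieces: one where all $m$ derivatives hit the integrand $\sig(u(s,y))$ — this is handled by the Leibniz/Faà di Bruno formula for $D^m[\sig(u(s,y))]$, which decomposes as $\sig'(u(s,y))\,D^m u(s,y)$ plus lower-order terms $\Del^m(\sig,u(s,y))$ depending only on derivatives up to order $m-1$ — and one where one of the $m$ ``time-space'' variables $(s_j,z_j)$ is frozen as the integration variable against $\Gam(t-s_j,x-z_j)$, which produces precisely $Z^m(t,x)_{\bar s,\bar z}$. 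The pathwise drift integral with $b$ is treated identically, only without the frozen-variable term. Collecting everything gives~\eqref{eq:113}.

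Once the equation is written down, the remaining task is to justify that it genuinely holds in $L^p(\Omega;\hact^{\otimes m})$: that $Z^m(t,x)\in L^p(\Omega;\hact^{\otimes m})$, that the stochastic and pathwise integrals on the right-hand side are well defined as $\hact^{\otimes m}$-valued integrals, and that~\eqref{eq:113} has a unique solution in that space. For this I would set up a Picard iteration for the unknown $V^m:=D^m u$, treating $Z^m$ and $\Del^m(\sig,u)$, $\Del^m(b,u)$ as known ``data'' (legitimate by the induction hypothesis, which controls all $D^j u$, $j\le m-1$, in every $L^p$), and apply Gronwall-type / Minkowski-integral arguments together with the conditional and unconditional moment estimates for Hilbert-space-valued integrals recalled in the appendix of the paper. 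The key quantitative inputs are: the bound $\int_0^t\int_{\red}|\tf\Gam(t-s)(\xi)|^2\mu(d\xi)\,ds\le\Phi(T)<\infty$ coming from~\eqref{eq:111}/\eqref{hyp1}, the boundedness of $\sig',b'$ and of all higher derivatives of $\sig,b$ (to control $\Del^m$), and the already-established finiteness of $\sup_{(t,x)}\E\|D^j u(t,x)\|_{\hact^{\otimes j}}^p$ for $j<m$. A standard fixed-point argument in $L^p(\Omega;\hact^{\otimes m})$ with the time-increment estimate then yields existence, uniqueness and the required $L^p$-bound, closing the induction.

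The main obstacle, I expect, is the bookkeeping for the ``frozen variable'' contributions and the combinatorics of the higher-order chain rule: one must argue carefully that differentiating $\sig(u(s,y))$ inside the stochastic integral $m$ times produces exactly the symmetrized sum $Z^m$ (one term for each choice of which derivative variable coincides with the integration variable) plus the term $\int_0^t\int_{\red}\Gam(t-s,x-y)[\Del^m(\sig,u(s,y))+\sig'(u(s,y))D^m u(s,y)]\,W(ds,dy)$, and similarly interpret $D^{m-1}_{\bar s(j),\bar z(j)}\sig(u(s_j,z_j))$ as an element of $\hact^{\otimes(m-1)}$ that, tensored against $\Gam(t-s_j,x-z_j)$ in the remaining variable, lands in $\hact^{\otimes m}$. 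A clean way to make this rigorous is to prove the identity first for the Picard approximations $u_n(t,x)$ (where everything is a finite iterated stochastic integral and the commutation of $D^m$ with the integrals is elementary), establish the uniform $L^p$-estimates there, and then pass to the limit using the closedness of $D^m$ and the completeness of $\D^{m,p}$; the analytic estimates needed for that limit are exactly the ones above.
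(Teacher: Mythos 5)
Your proposal is sound: the paper does not actually write out a proof of Proposition \ref{prop:dinf} but presents it as a direct consequence of \cite[Proposition 2.4]{Marquez}, \cite[Theorem 1]{Sanz:04} and \cite[Proposition 6.1]{Nualart:07}, and the argument you sketch --- induction on $m$, differentiating the Picard iterates, the Leibniz/chain rule producing the frozen-variable term $Z^m$ and the lower-order remainder $\Del^m$, uniform $L^p$-bounds via the Hilbert-space-valued integral estimates and Gronwall, then passage to the limit using the closedness of $D^m$ --- is precisely the standard proof carried out in those references. The only point to watch is the limiting step: uniform boundedness of $\Vert u_n\Vert_{m,p}$ together with $u_n\to u$ in $L^p$ yields only weak convergence of $D^m u_n$, so identifying the limit equation requires either strong convergence of the derivatives (via Gronwall estimates on differences) or a uniqueness argument for the linear equation, which your route through the Picard approximations accommodates.
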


We remark that the Hilbert-space-valued stochastic and pathwise integrals in equation (\ref{eq:113}) are understood as it has been described in Section \ref{Hilbert-integrals}.

As far as the existence of a smooth density is concerned, we have the following result (see \cite[Theorem 6.2]{Nualart:07}):
\begin{thm}\label{thm:smooth-density}
Assume that \textnormal{(\ref{eq:111})} is satisfied and $\sigma, b \in \mathcal{C}^{\infty}(\R)$ and their derivatives of order greater than or equal to one are bounded.
Moreover, suppose that $\vert \sigma(z) \vert \geq c >0$, for all $z \in \R$. Then, for every $(t,x)\in (0,T]\times \red$, the law of the random variable $u(t,x)$ has a $\cinf$ density.
\end{thm}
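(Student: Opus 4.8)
The statement to prove is Theorem \ref{thm:smooth-density}: under condition (\ref{eq:111}), smoothness assumptions on $\sigma,b$, and non-degeneracy $|\sigma(z)|\geq c>0$, the law of $u(t,x)$ has a $\cinf$ density. The plan is to apply the standard Malliavin-calculus criterion: a random variable in $\D^\infty$ whose Malliavin variance $\gamma_{u(t,x)} = \|Du(t,x)\|_{\hact}^2$ has negative moments of all orders admits a $\cinf$ density (the one-dimensional case of \cite[Theorem 2.1.4]{Nualart:06}). By Proposition \ref{prop:dinf} we already know $u(t,x)\in\D^\infty$, so the whole burden is the estimate
\[
\E\bigl[\|Du(t,x)\|_{\hact}^{-2p}\bigr]<\infty \quad\text{for all }p\geq 1.
\]

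First I would write the equation satisfied by the first Malliavin derivative: from (\ref{eq:113}) with $m=1$ (so $\Del^1\equiv 0$),
\[
D_{s,z}u(t,x) = \Gam(t-s,x-z)\sig(u(s,z)) + \int_s^t\!\!\int_{\red}\!\Gam(t-r,x-y)\sig'(u(r,y))D_{s,z}u(r,y)\,W(dr,dy) + (\text{pathwise term}),
\]
valid for $s<t$ and zero for $s>t$. Then I would fix a small parameter $\delta\in(0,t)$ and bound from below
\[
\|Du(t,x)\|_{\hact}^2 \geq \int_{t-\delta}^t \|D_{s,\cdot}u(t,x)\|_{\hac}^2\,ds.
\]
On this short interval the leading term is $\Gam(t-s,x-\cdot)\sig(u(s,\cdot))$; using $|\sig|\geq c$ together with a lower bound for $\|\Gam(t-s,x-\cdot)\|_{\hac}^2$ in the spirit of (\ref{eq:112}), and controlling the remaining stochastic and pathwise contributions on $[t-\delta,t]$ by $L^p$-estimates that are $o$ of the main term as $\delta\downarrow 0$, one obtains a bound of the form $\|Du(t,x)\|_{\hact}^2 \geq c\,\Phi_\delta - R_\delta$ with $R_\delta$ small in $L^p$. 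The standard way to turn this into negative moments is the argument of Nualart (see \cite[Section 2.3]{Nualart:06}): decompose the probability $\P\{\|Du(t,x)\|_{\hact}^2<\e\}$ over a partition of $[t-\delta,t]$, on each piece use the structure of the SPDE and the Cauchy–Schwarz / factorization inequality for $\langle D u,\cdot\rangle$, and invoke Chebyshev with arbitrarily high moments of $Du$ (finite by Proposition \ref{prop:dinf}) to get $\P\{\|Du(t,x)\|_{\hact}^2<\e\}\leq C_q\,\e^q$ for every $q$, which yields $\E[\|Du(t,x)\|_{\hact}^{-2p}]<\infty$.

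Since this is exactly \cite[Theorem 6.2]{Nualart:07}, in practice the proof is simply a citation: I would state that the result is precisely that theorem and that, as recalled above, it follows from the membership $u(t,x)\in\D^\infty$ established in Proposition \ref{prop:dinf} together with the non-degeneracy estimate $\E[\|Du(t,x)\|_{\hact}^{-2p}]<\infty$ proved there under the hypothesis $|\sig|\geq c>0$ and condition (\ref{eq:111}). The main obstacle, were one to redo it, is the negative-moment estimate for the Malliavin variance: the spatially homogeneous covariance means one works with $\|\cdot\|_{\hac}$-norms of kernels rather than $L^2$-norms, and one must carefully show that the contribution of the drift and of the stochastic convolution over a short time interval $[t-\delta,t]$ is genuinely negligible compared to the non-degenerate leading term $\Gam(t-s,x-\cdot)\sig(u(s,\cdot))$, uniformly enough to run the partition/Chebyshev argument. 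All of this is already carried out in \cite{Nualart:07}, so here it suffices to quote it.
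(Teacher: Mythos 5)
Your proposal is correct and coincides with the paper's treatment: the paper gives no independent proof of Theorem \ref{thm:smooth-density} but simply quotes \cite[Theorem 6.2]{Nualart:07}, which is exactly the citation you make, and your sketch of that theorem's argument (membership of $u(t,x)$ in $\D^\infty$ via Proposition \ref{prop:dinf} plus negative moments of $\|Du(t,x)\|^2_{\hact}$ obtained from the small-time lower bound with leading term $\Gam(t-s,x-\cdot)\sig(u(s,\cdot))$ and a Chebyshev argument) is the same strategy used there and mirrored in Proposition \ref{prop1}. Nothing further is needed.
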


The following technical result, which will be used in the proof of Theorem \ref{maint}, exhibits an almost sure estimate for the conditional moment of the iterated Malliavin derivative of $u$ in a small time interval. 
As will be explained in Remark \ref{rmk:general-gam}, this result is still valid for a slightly more general class of SPDEs, such as the stochastic wave equation in space dimension $d\in \{1,2,3\}$. Nevertheless, for the sake of simplicity, we will focus either the statement and its proof on our stochastic heat equation (\ref{eq:11}).  

\begin{lem} \label{lema4}
Let $0\leq a<e\leq T$, $m\in \N$ and $p\geq 1$. Assume that the coefficients $b,\sig:\re\rightarrow \re$ belong to $\cinf(\re)$ and all their derivatives of order greater than or equal to one are bounded. Then, there exists a positive constant $C$, which is independent of $a$ and $e$, such that, for all $\del\in (0,e-a]$:
\begin{equation*}
\sup_{(\tau,y) \in [e-\del,e]\times \red} \E_a \left( \Vert D^m u(\tau,y) \Vert^{2p}_{\mathcal{H}^{\otimes m}_{e-\del,e}} \right) \leq C
\, \left(\Phi(\del)\right)^{mp}, \quad a.s.,
\end{equation*}
where we remind that $\mathcal{H}^{\otimes m}_{e-\del,e}$ denotes the Hilbert space $L^2([e-\del,e];\hac^{\otimes m})$ and, for all $t\geq 0$,
\[
\Phi(t)= \int_0^t \int_{\red} |\tf \Gam(s)(\xi)|^2\, \mu(d\xi) ds.
\]
\end{lem}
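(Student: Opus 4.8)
The plan is to prove the estimate by induction on the order $m$ of the Malliavin derivative, using the explicit equation \textnormal{(\ref{eq:113})} satisfied by $D^m u(\tau,y)$ together with the conditional moment estimates for Hilbert-space-valued stochastic and pathwise integrals recalled in the appendix (Section \ref{Hilbert-integrals}), and Gronwall's lemma applied on the small interval $[e-\del,e]$.

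\medskip

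\noindent\textbf{Base case $m=1$.} Here $\Del^1(\sig,u)=\Del^1(b,u)=0$, so from \textnormal{(\ref{eq:113})} the derivative $D u(\tau,y)$, viewed as an $\hac_{e-\del,e}$-valued object, solves
\[
D u(\tau,y)= \Gam(\tau-\cdot,y-\ast)\sig(u(\cdot,\ast)) \1_{[e-\del,\tau]} + \int_{e-\del}^\tau\!\!\int_{\red} \Gam(\tau-s,y-z)\, \sig'(u(s,z))\, D u(s,z)\, W(ds,dz) + (\text{pathwise term}).
\]
Taking $\E_a\|\cdot\|^{2p}_{\hac_{e-\del,e}}$, I would bound the free term by $\sup|\sig|^{2p}$ times $\big(\int_{e-\del}^\tau\!\int_{\red}|\tf\Gam(\tau-s)(\xi)|^2\mu(d\xi)ds\big)^{p}\le \Phi(\del)^p$, using the time-translation invariance of $\tf\Gam$. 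For the stochastic integral term I would apply the conditional Burkholder-type inequality from the appendix, which produces a factor $\Phi(\del)^{p-1}$ (Hölder in the $ds\,\mu(d\xi)$ "measure" of total mass $\le\Phi(\del)$) times $\int_{e-\del}^\tau \sup_z \E_a\|Du(s,z)\|^{2p}_{\hac_{e-\del,e}} \big(\int_{\red}|\tf\Gam(\tau-s)(\xi)|^2\mu(d\xi)\big)ds$; the pathwise term is handled similarly (and is lower order in $\del$). Setting $\varphi(\tau):=\sup_{(\tau',y)\in[e-\del,\tau]\times\red}\E_a\|Du(\tau',y)\|^{2p}_{\hac_{e-\del,e}}$ and noting that $s\mapsto\int_{\red}|\tf\Gam(\tau-s)(\xi)|^2\mu(d\xi)$ is integrable with integral $\le\Phi(\del)$, Gronwall's lemma (in the integral form with a finite "clock" $\Phi(\del)$, cf. \cite[Lemma 15]{Dalang:99}) yields $\varphi(e)\le C\,\Phi(\del)^p$, with $C$ depending only on $T,\sig,b$ and independent of $a,e,\del$.

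\medskip

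\noindent\textbf{Inductive step.} Assume the bound holds for all orders $1,\dots,m-1$. In \textnormal{(\ref{eq:113})} the inhomogeneous term $Z^m(\tau,y)$ and the terms $\Del^m(\sig,u(s,z))$, $\Del^m(b,u(s,z))$ only involve iterated derivatives of order $\le m-1$, together with bounded derivatives of $\sig,b$; using the Leibniz/Faà di Bruno structure of $\Del^m$, the induction hypothesis, and again the translation invariance of $\tf\Gam$, these contribute a term bounded by $C\,\Phi(\del)^{mp}$ to $\E_a\|D^m u(\tau,y)\|^{2p}_{\hac^{\otimes m}_{e-\del,e}}$ (one loses no powers of $\del$ because each factor $\Gam$ carries one $\Phi(\del)$ and the remaining derivatives carry the inductive powers, matching $m$ total). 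The two "linear-in-$D^m u$" integral terms are treated exactly as in the base case, producing a Gronwall inequality for $\psi(\tau):=\sup_{(\tau',y)\in[e-\del,\tau]\times\red}\E_a\|D^m u(\tau',y)\|^{2p}_{\hac^{\otimes m}_{e-\del,e}}$ of the form $\psi(\tau)\le C\Phi(\del)^{mp}+C\int_{e-\del}^\tau \psi(s)\,g_\tau(s)\,ds$ with $\int g_\tau\le\Phi(\del)$, and Gronwall closes the induction.

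\medskip

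\noindent\textbf{Main obstacle.} The routine parts are the combinatorics of $\Del^m$ and the bookkeeping of powers of $\Phi(\del)$; the genuinely delicate point is ensuring the constant $C$ is \emph{uniform in $a,e$ and in $\del\in(0,e-a]$}. This requires that the conditional stochastic- and pathwise-integral moment inequalities from the appendix hold with constants not depending on the base point $a$ nor on the interval $[e-\del,e]$ (which they do, by stationarity of the increments of $W$ and the fact that $\tf\Gam(\tau-s)$ depends only on $\tau-s$), and that the Gronwall argument uses the finite clock $\Phi(\del)$ rather than the length $\del$ — so that the exponential factor $e^{C\Phi(\del)}\le e^{C\Phi(T)}$ stays bounded. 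One must also invoke that $u(t,x)\in\D^\infty$ (Proposition \ref{prop:dinf}) a priori, so all the quantities $\psi(\tau)$ are finite before running Gronwall; this is where the hypothesis that $\sig,b$ have bounded derivatives of all orders $\ge1$ is used, exactly as in Proposition \ref{prop:dinf}.
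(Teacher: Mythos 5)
Your proposal follows essentially the same route as the paper's proof: induction on $m$ based on equation (\ref{eq:113}), the conditional moment bounds for Hilbert-space-valued stochastic and pathwise integrals (Lemmas \ref{lema2} and \ref{lema3}), the Leibniz rule combined with the induction hypothesis to bound the $Z^m$ and $\Delta^m(\sigma,u)$, $\Delta^m(b,u)$ contributions by $C\Phi(\delta)^{mp}$, and Gronwall's lemma \cite[Lemma 15]{Dalang:99} with a kernel of the form $J(t-s)+1$ whose total mass is controlled by $\Phi(T)+T$, which gives the uniformity in $a$, $e$, $\delta$. The only real difference is the base case $m=1$, which the paper simply cites from \cite[Lemma 2.5]{Marquez} while you sketch it directly; note that your bound of the free term by $\sup|\sigma|^{2p}$ uses the boundedness of $\sigma$ assumed in Theorem \ref{maint} rather than only the lemma's stated hypothesis that the derivatives of order $\geq 1$ are bounded.
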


\begin{proof}
We will proceed by induction with respect to $m\in \N$. First, let us observe that the case $m=1$ has been proved in \cite[Lemma 2.5]{Marquez} (see also
\cite[Lemma 5]{Sanz:04}). Suppose now that the statement holds for any $j=1,\dots,m-1$, and let us check its veracity for $j=m$.

Let $e-\del\leq t\leq e$ and $x\in \red$. Then, the conditioned norm of the Malliavin derivative $D^mu(t,x)$ can be decomposed as follows:
\[
E_a \left( \|D^mu(t,x)\|^{2p}_{\hac^{\otimes m}_{e-\del,e}}\right)\leq C(B_1+B_2+B_3), \quad a.s.
\]
with
\[
B_1= E_a \left( \int_{(e-\del,e)^m} \Big\| \sum_{j=1}^m \Gam(t-s_j,x-\star) D^{m-1}_{\bar s(j)} \sig(u(s_j,\star))\Big\|^2_{\hac^{\otimes m}}
d\bar s \right)^p
\]
(here, if we formally denote by $(z_1,\dots,z_m)$ the variables of $\hac^{\otimes m}$, the symbol $\star$ corresponds to $z_j$),
\begin{equation*} \begin{split}
B_2&= E_a \left( \left\| \int_0^t\int_{\red} \Gam(t-s,x-y) [\Del^m(\sig,u(s,y))-D^mu(s,y) \sig'(u(s,y))]W(ds,dy)\right\|^{2p}_{\hac^{\otimes m}_{e-\del,e}}\right), \\
B_3&= E_a \left( \left\| \int_0^t\int_{\red} \Gam(t-s,x-y) [\Del^m(b,u(s,y))-D^mu(s,y) b'(u(s,y))] \, dy ds \right\|^{2p}_{\hac^{\otimes m}_{e-\del,e}}\right).
\end{split}
\end{equation*}
Let us start with the study of the term $B_1$. First, note that we must have that $e-\del\leq s_j\leq t$, thus
\[
B_1 \leq C \sum_{j=1}^m E_a \left( \int_{e-\del}^t ds_j \int_{(e-\del,e)^{m-1}} d\bar s(j) \Big\| \Gam(t-s_j,x-\star) D^{m-1}_{\bar s(j)}
\sig(u(s_j,\star))\Big\|^2_{\hac^{\otimes m}} \right)^p.
\]
At this point, we can proceed as in the proof of \cite[Lemma 2]{Sanz:04} (see p. 173 therein), so that we can infer that
\[
B_1 \leq C \left(\int_{e-\del}^t J(t-r)\, dr\right)^{p-1} \int_{e-\del}^t \sup_{y\in\red} E_a \left( \|D^{m-1} \sig(u(r,y))\|^{2p}_{\hac^{\otimes (m-1)}_{e-\del,e}} \right) J(t-r)\, dr,
\]
where we have used the notation 
\[
J(r)=\int_{\red} |\tf \Gam(r)(\xi)|^2\, \mu(d\xi).
\]
Precisely, we have used the fact that $\Gam$ is a smooth function, and then applied H\"older's and Cauchy-Schwarz inequalities. Hence, we have that
\beq
B_1 \leq C \left(\Phi(\del)\right)^p \sup_{(r,y) \in [e-\del,e]\times \red} E_a \left( \|D^{m-1} \sig(u(r,y))\|^{2p}_{\hac^{\otimes (m-1)}_{e-\del,e}} \right)
\label{eq:52}
\eeq
In order to bound the above supremum, one applies the Leibniz rule for the iterated Malliavin derivative, the smoothness assumptions on $\sig$, H\"older's inequality and the induction hypothesis, altogether yielding
\[
\sup_{(r,y) \in [e-\del,e]\times \red} E_a \left( \|D^{m-1} \sig(u(r,y))\|^{2p}_{\hac^{\otimes (m-1)}_{e-\del,e}} \right) \leq C (\Phi(\del))^{(m-1)p}, \quad a.s.
\]
Plugging this bound in (\ref{eq:52}), we end up with
\beq
B_1 \leq C (\Phi(\del))^{mp}, \quad a.s.
\label{eq:53}
\eeq

Next, we will deal with the term $B_2$, which will be essentially bounded by means of Lemma \ref{lema2}, as follows:
\begin{align*}
B_2 & \leq C \left(\int_{e-\del}^t J(t-r)\, dr\right)^{p-1} \nonumber \\
& \quad \times \int_{e-\del}^t \left[ \sup_{y\in \red} E_a \left( \|\Del^m(\sig,u(s,y))\|^{2p}_{\hac^{\otimes m}_{e-\del,e}} \right) +
\sup_{y\in \red} E_a \left( \|D^m u(s,y)\|^{2p}_{\hac^{\otimes m}_{e-\del,e}} \right)\right] J(t-s) ds.
\end{align*}
Owing again to the the Leibniz rule for the Malliavin derivative and noting that $\Del^m$ only involves Malliavin derivatives up to order $m-1$, one makes use of the induction hypothesis to infer that
\[
\sup_{y\in \red} E_a \left( \|\Del^m(\sig,u(s,y))\|^{2p}_{\hac^{\otimes m}_{e-\del,e}} \right) \leq C (\Phi(\del))^{mp}, \quad a.s.
\]
Hence,
\begin{equation} \begin{split}
B_2 & \leq C (\Phi(T))^{p-1} \int_{e-\del}^t \left[ (\Phi(\del))^{mp} +
\sup_{y\in \red} E_a \left( \|D^m u(s,y)\|^{2p}_{\hac^{\otimes m}_{e-\del,e}} \right)\right] J(t-s) ds \\
& \leq C_1 \int_{e-\del}^t \left[ (\Phi(\del))^{mp} +
\sup_{(\tau,y)\in [e-\del,s]\times \red}
E_a \left( \|D^m u(\tau,y)\|^{2p}_{\hac^{\otimes m}_{e-\del,e}} \right)\right] J(t-s) ds,
\label{eq:54}
\end{split}
\end{equation}
almost surely, where $C_1$ denotes some positive constant.

Eventually, using similar arguments, we can show that the term $B_3$ is bounded above by:
\begin{equation} \begin{split}
&C \int_{e-\del}^t\int_{\red} \Gam(t-s,x-y) \left[ E_a \left( \|\Del^m(b,u(s,y))\|^{2p}_{\hac^{\otimes m}_{e-\del,e}} \right) + E_a \left( \|D^m u(s,y)\|^{2p}_{\hac^{\otimes m}_{e-\del,e}} \right)\right] dy ds \\
& \leq C \int_{e-\del}^t \left[ (\Phi(\del))^{mp} +
\sup_{(\tau,y)\in [e-\del,s]\times \red}
E_a \left( \|D^m u(\tau,y)\|^{2p}_{\hac^{\otimes m}_{e-\del,e}} \right)\right] ds, \quad a.s.
\label{eq:55}
\end{split}
\end{equation}
Here, we have also used that $\int_{\red} \Gam(s,y)\, dy$ is uniformly bounded with respect to $s$.

Set
\[
F(t):=\sup_{(s,y)\in [e-\del,t]\times \red} E_a \left( \|D^m u(s,y)\|^{2p}_{\hac^{\otimes n}_{e-\del,e}} \right), \quad t\in [e-\del,e].
\]
Then, (\ref{eq:53})-(\ref{eq:55}) imply that
\[
F(t)\leq C_2 (\Phi(\del))^{mp} + C_1 \int_{e-\del}^t [(\Phi(\del))^{mp} + F(s)] (J(t-s)+1) ds, \quad a.s.,
\]
where $C_1$ and $C_2$ are some positive constants. We conclude the proof by applying Gronwall's lemma \cite[Lemma 15]{Dalang:99}.
\end{proof}

\begin{remark}\label{rmk:general-gam}
Lemma \ref{lema4} still remains valid for a more general class of SPDEs, namely for those that have been considered in the paper \cite{Dalang:99} (see also \cite{Nualart:07}). In these references, an SPDE driven by a linear second-order partial differential operator has been considered, where one assumes that the corresponding fundamental solution $\Gam$ satisfies the following: for all $s$, $\Gam(s)$ is a non-negative measure which defines a distribution with rapid decrease such that condition (\ref{hyp1}) is fulfilled and
\[
\sup_{0\leq s\leq T}\Gam(s,\red)<+\infty.
\]
As explained in \cite[Section 3]{Dalang:99}, together with the stochastic heat equation, the stochastic wave equation in space dimension $d\in \{1,2,3\}$ is another example of such a type of equations.
Finally, we point out that the proof of Lemma \ref{lema4} in such a general setting would require a smoothing procedure of $\Gam$ in terms of an approximation of the identity, which makes the proof slightly 
longer and more technical; this argument has been used for instance in \cite[Lemma 5]{Sanz:04}.
\end{remark}

\section{Proof of the lower bound}
\label{sec:lower-bound}

In this section, we prove the lower bound in the statement of Theorem \ref{maint}. For this,
we are going to show that $u(t,x)$ is a uniformly elliptic random variable in the sense of Definition \ref{def}.
Then, an application of Theorem \ref{t1} will give us the desired lower bound.

\medskip

To begin with, we fix $(t,x) \in (0,T] \times \R^d$, we consider a partition $0=t_0<t_1<\cdots<t_N=t$ whose norm converges to zero, and define:
\begin{equation*} \begin{split}
F_n= (\Gam(t)\ast u_0)(x) &+\int_0^{t_n} \int_{\R^d} \Gamma(t-s, x-y) \sigma(u(s,y)) W(ds, dy) \\
&+\int_0^{t_n} \int_{\R^d} \Gamma(t-s, x-y) b(u(s,y))  dsdy.
\end{split}
\end{equation*}
Clearly, $F_n$ is ${\mathcal{F}}_{t_n}$-measurable, for all $n=0,...,N$ and note that $F_0= (\Gam(t)\ast u_0)(x) $.
Moreover, $F_n$ belongs to $\D^\infty$ and, for all $m\in \mathbb{N}$ and $p\geq 1$, the norm $\Vert F_n \Vert_{m,p}$ can be uniformly bounded with respect to $(t,x)\in (0,T]\times \red$ 
(see \cite[Proposition 2.4]{Marquez}, \cite[Theorem 1]{Sanz:04}, and also \cite[Proposition 6.1]{Nualart:07}). 

The local variance of the random variable $u(t,x)$ will be measured through the function $g(s):=\Gamma(t-s)$. Then, observe that we have:
$$
\Delta_{n-1} (g)=\int_{t_{n-1}}^{t_n} \int_{\R^d} \vert \mathcal{F}\Gamma(t-s) (\xi) \vert^2 \mu(d
\xi) ds,
$$
and this quantity is clearly positive for all $n$ (see (\ref{eq:111})).
 
We next prove that condition {\bf (H2b)} is satisfied, which in particular implies that $F_n$ is conditionally non-degenerate in $[t_{n-1}, t_n]$. Recall that we are assuming that the coefficients $b$ and $\sig$ belong to $\mathcal{C}^\infty_b(\re)$.
\begin{prop}\label{prop1}
For any $p>0$, there exists a constant $C$ such that:
\begin{equation} \label{aux}
\E_{t_{n-1}} \biggl[ \bigg\vert \int_{t_{n-1}}^{t_n} \Vert D_r F_n \Vert^2_{\mathcal{H}} \, dr \bigg\vert^{-p} \biggr]
\leq C\, \Delta_{n-1} (g)^{-p} \qquad \text{a.s.}
\end{equation}
\end{prop}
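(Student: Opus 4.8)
The plan is to derive a lower bound for the conditional Malliavin variance of $F_n$ over the small interval $[t_{n-1},t_n]$ by isolating the dominant contribution coming from the stochastic-integral term and absorbing the pathwise (drift) contribution. Recalling that $F_n$ is $\mathcal{F}_{t_n}$-measurable and that only the stochastic and pathwise integrals on $[0,t_n]$ depend on the noise, the Malliavin derivative $D_rF_n$ for $r\in[t_{n-1},t_n]$ splits, by Proposition \ref{prop:dinf} (the case $m=1$), into a leading term
\[
\Gamma(t-r,x-\star)\,\sigma(u(r,\star))
\]
coming from differentiating the It\^o integral at the ``diagonal'' time $r$, plus terms involving $D_r u(s,y)$ for $s\in[r,t_n]$ integrated against $\sigma'$ (stochastic part) and $b'$ (pathwise part). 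Thus the first step is to write, for $r\in[t_{n-1},t_n]$,
\[
\|D_rF_n\|_{\mathcal H}\ \ge\ \|\Gamma(t-r,x-\star)\sigma(u(r,\star))\|_{\mathcal H}\ -\ \|R_r\|_{\mathcal H},
\]
where $R_r$ collects the two remainder integrals, and then integrate $\|\cdot\|_{\mathcal H}^2$ in $r$ over $[t_{n-1},t_n]$, using $(a-b)^2\ge \tfrac12 a^2-b^2$.

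The second step is to bound the leading term from below. Using $|\sigma(z)|\ge c>0$ and the fact that $\|\Gamma(t-r,x-\star)\sigma(u(r,\star))\|_{\mathcal H}^2=\int_{\red}|\mathcal F\Gamma(t-r)(\xi)|^2 |\mathcal F(\sigma(u(r,\cdot))\Gamma(t-r,x-\cdot))(\xi)/\mathcal F\Gamma(t-r)(\xi)|^2\mu(d\xi)$ — more carefully, exploiting that the noise is non-negative definite so that $\sigma$ bounded below forces $\|\Gamma(t-r,x-\star)\sigma(u(r,\star))\|_{\mathcal H}^2\ge c^2\|\Gamma(t-r,x-\star)\|_{\mathcal H}^2=c^2\int_{\red}|\mathcal F\Gamma(t-r)(\xi)|^2\mu(d\xi)=c^2 J(t-r)$ — one gets
\[
\int_{t_{n-1}}^{t_n}\|\Gamma(t-r,x-\star)\sigma(u(r,\star))\|_{\mathcal H}^2\,dr\ \ge\ c^2\int_{t_{n-1}}^{t_n}J(t-r)\,dr\ =\ c^2\,\Delta_{n-1}(g),
\]
by the very definition of $\Delta_{n-1}(g)$ with $g(s)=\Gamma(t-s)$. (One must check the inequality $\|\Gamma\,\sigma(u)\|_{\mathcal H}\ge c\|\Gamma\|_{\mathcal H}$ carefully; since $\mathcal H$ may contain distributions this uses that $\Gamma(t-r,x-\cdot)$ is a genuine non-negative function and that $\langle f\phi,f\phi\rangle_{\mathcal H}\ge c^2\langle f,f\rangle_{\mathcal H}$ whenever $f\ge0$ and $|\phi|\ge c$, which follows from writing the $\mathcal H$-inner product as $\int\int f(x)\phi(x)f(y)\phi(y)\Lambda(dx-dy)$ and a pointwise domination at the level of the non-negative measure $\Lambda$.)

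The third step is to show the remainder term is negligible on a small interval: by the conditional second-moment estimate of Lemma \ref{lema4} with $m=1$ (applied on $[t_{n-1},t_n]$, i.e.\ with $a=t_{n-1}$, $e=t_n$, $\delta=t_n-t_{n-1}$), together with the boundedness of $\sigma',b'$ and the boundedness of the $\mathcal H$-norm of $\Gamma$, one gets $\mathbb E_{t_{n-1}}\big(\int_{t_{n-1}}^{t_n}\|R_r\|_{\mathcal H}^2dr\big)\le C\,\Phi(t_n-t_{n-1})^2\le C\,\Delta_{n-1}(g)\cdot\Phi(\|\pi_N\|)$, which is $o(\Delta_{n-1}(g))$ once $\|\pi_N\|$ is small enough (here one uses $\Phi(\delta)\le C\delta^{1/2}$-type control, or simply continuity of $\Phi$ at $0$, and the lower bound (\ref{eq:112}) relating $\Delta_{n-1}(g)$ to the interval length to convert between the two). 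Combining the three steps, on the event in question, $\int_{t_{n-1}}^{t_n}\|D_rF_n\|_{\mathcal H}^2dr\ge \tfrac{c^2}{4}\Delta_{n-1}(g)$ with high probability; to upgrade this to the negative-moment bound (\ref{aux}) one follows the standard Kohatsu-Higa / Dalang-Nualart scheme: estimate $\mathbb P_{t_{n-1}}\{\int_{t_{n-1}}^{t_n}\|D_rF_n\|_{\mathcal H}^2dr<\epsilon\,\Delta_{n-1}(g)\}$ by a small-ball argument — splitting off the leading Gaussian-type term and using Chebyshev on the conditional moments of $R_r$ from Lemma \ref{lema4} — and then integrate $\mathbb E_{t_{n-1}}[X^{-p}]=p\int_0^\infty \lambda^{p-1}\mathbb P_{t_{n-1}}\{X<1/\lambda\}d\lambda$. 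The main obstacle is the small-ball/negative-moment estimate: controlling $\mathbb P_{t_{n-1}}\{\int_{t_{n-1}}^{t_n}\|D_rF_n\|^2_{\mathcal H}dr\le\epsilon\Delta_{n-1}(g)\}$ with the right polynomial rate in $\epsilon$ requires the conditional higher-moment bounds of Lemma \ref{lema4} and a careful use of the homogeneous-noise inner product (rather than the $L^2([0,T]\times A)$ computations available in \cite{Kohatsu:03}); this is exactly where the spatial covariance structure makes the argument ``much more involved,'' as announced in the Introduction.
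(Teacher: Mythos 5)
Your overall strategy (split $D_rF_n$ into the diagonal term $\Gamma(t-r,x-\star)\sigma(u(r,\star))$ plus a remainder, bound the diagonal term below via $|\sigma|\ge c$ and the non-negativity of $\Gamma$ and $\Lambda$, control the remainder with the conditional estimates of Lemma \ref{lema4}, and pass to negative moments through $\E_{t_{n-1}}[X^{-p}]=p\int_0^\infty y^{p-1}\P_{t_{n-1}}\{X<1/y\}\,dy$) is indeed the skeleton of the paper's proof, and your Step 2 lower bound $\Vert\Gamma(t-r,x-\star)\sigma(u(r,\star))\Vert^2_{\mathcal H}\ge c^2 J(t-r)$ is exactly the ingredient the paper uses (and justifies by an approximation-of-the-identity argument when verifying {\bf (H2c)}).

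However, there is a genuine gap in Step 3 and in the way you propose to close the argument. Working on the whole interval $[t_{n-1},t_n]$ and showing that the remainder has conditional second moment of order $\Delta_{n-1}(g)\cdot\Phi(\Vert\pi_N\Vert)$, hence is ``negligible with high probability once the mesh is small,'' only yields a bound on $\P_{t_{n-1}}\{\int_{t_{n-1}}^{t_n}\Vert D_rF_n\Vert^2_{\mathcal H}dr<\epsilon\,\Delta_{n-1}(g)\}$ that is small in the mesh but \emph{independent of} $\epsilon$; with such a bound the integral $\int_0^\infty y^{p-1}\P_{t_{n-1}}\{X<1/y\}\,dy$ diverges, so no negative moment of any order is obtained. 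What is needed is precisely the quantitative small-ball estimate $\P_{t_{n-1}}\{\cdot<\epsilon\}\le C\epsilon^{q}$ for every $q$, which you acknowledge but do not actually prove: you describe it as ``the standard scheme'' without supplying the mechanism. The paper's device is to restrict the time integral to a sub-interval $[t_n-\delta,t_n]$ and to choose $\delta=\delta(\epsilon)$ through the relation $I_0/\Delta_{n-1}(g)=\tfrac{4}{\sigma_0^2}\epsilon$, where $I_0=\int_{t_n-\delta}^{t_n}J(t-r)\,dr$; then the conditional Chebyshev bound for the remainder carries the factor $\delta^{p-1}\Phi(\delta)^p$, and the time-homogeneity lower bound (\ref{eq:112}) converts the choice of $\delta$ into $\delta\le C\epsilon$, giving the rate $C\epsilon^{p-1}$ with $p$ arbitrarily large. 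This $\delta(\epsilon)$ localization near $t_n$, together with (\ref{eq:112}) (whose absence is exactly what blocks the wave equation, cf.\ Remark \ref{rmk:heat}), is the missing idea; note also that the resulting constant is uniform in $n$ and in the partition, whereas your argument would make the estimate depend on the mesh being small, which is not how {\bf (H2b)} is meant to hold.
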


\begin{proof}
It follows similarly as the proof of Theorem 6.2 in \cite{Nualart:07}.
More precisely, it suffices to show that for any $q>2$, there exists $\ep_0=\epsilon_0(q)>0$ such that, for all $\epsilon \leq \epsilon_0$:
\begin{equation} \label{cc}
\P_{t_{n-1}} \biggl\{\Delta_{n-1}^{-1} (g) \int_{t_{n-1}}^{t_n} \Vert D_r F_n \Vert^2_{\mathcal{H}} \, dr <\epsilon\biggr\} \leq C \epsilon^q \qquad \text{a.s.}
\end{equation}
Indeed, if we set $X=\Delta_{n-1}^{-1} (g) \int_{t_{n-1}}^{t_n} \Vert D_r F_n\Vert^2_{\mathcal{H}} \, dr$, then we have:
\begin{equation*}
\E_{t_{n-1}} [X^{-p}]=\int_0^{\infty} p y^{p-1} \P_{t_{n-1}} \left\{ X< \frac{1}{y}\right\}\, dy \qquad \text{a.s.}
\end{equation*}
Choosing $q$ sufficiently large in (\ref{cc}) (namely $q>p$), we conclude that (\ref{aux}) is fulfilled, and hence the statement of {\bf (H2b)}.

We next prove (\ref{cc}).
For any $\delta\in (0,t_n-t_{n-1})$, we have that
\begin{equation*}
\begin{split}
\int_{t_{n-1}}^{t_n} \Vert D_r F_n \Vert^2_{\mathcal{H}} \, dr\geq \frac{\sig_0^2}{2} I_0 -(2I_1+2I_2),
\end{split}
\end{equation*}
where
\begin{equation*}
\begin{split}
I_0&=\int_{t_n-\delta}^{t_n}
\int_{\R^d} \vert \mathcal{F}\Gamma(t-r) (\xi) \vert^2 \mu(d\xi) dr, \\
I_1&= \int_{t_n-\delta}^{t_n} \bigg\Vert \int_{r}^{t_n}
\int_{\R^d} \Gamma(t-s, x-z) \sigma'(u(s,z)) D_{r}u(s,z) \, W(ds,dz) \bigg\Vert^2_{\mathcal{H}} dr, \\
I_2&= \int_{t_n-\delta}^{t_n} \bigg\Vert \int_{r}^{t_n}
\int_{\R^d} \Gamma(t-s, x-z) b'(u(s,z)) D_{r}u(s,z) \, dzds \bigg\Vert^2_{\mathcal{H}} dr,
\end{split}
\end{equation*}
and where we have denoted by $\sig_0$ the constant such that $|\sig(v)|\geq \sig_0>0$, for all $v\in \re$.

We next bound the $p$th moments of $I_1$ and $I_2$ for $p >1$.
Owing to H\"older's inequality and Lemmas \ref{lema2} and \ref{lema4}, we get:
\begin{equation*} \begin{split}
\E_{t_{n-1}} [\vert I_1 \vert^{p}] & \leq
\delta^{p-1} \, \E_{t_{n-1}} \biggl[ \int_{t_n-\delta}^{t_n} \bigg\Vert \int_{t_n-\delta}^{t_n}
\int_{\R^d} \Gamma(t-s, x-z) \\
& \qquad \qquad \times \sigma'(u(s,z)) D_{r}u(s,z) \, W(ds,dz) \bigg\Vert^{2p}_{\mathcal{H}} dr \biggr] \\
& = \delta^{p-1} (t_n-t_{n-1})^{p-1} \E_{t_{n-1}} \biggl[ \int_0^{\del} \bigg\Vert \int_{t_n-\delta}^{t_n}
\int_{\R^d} \Gamma(t-s, x-z) \\
& \qquad \qquad \times \sigma'(u(s,z)) D_{t_n-r}u(s,z) \, W(ds,dz) \bigg\Vert^{2p}_{\mathcal{H}} dr \biggr] \\
& \leq
\delta^{p-1} I_0^p \sup_{\stackrel{s \in [0, \del]}{z\in \red} } \E_{t_{n-1}} \biggl[ \Vert D_{t_n-\cdot} u(t_n-s,z)
\Vert^{2p}_{\mathcal{H}_{\del}} \biggr] \\
&\leq \delta^{p-1} \, I_0^p \, \Phi(\del)^p.
\end{split}
\end{equation*}
Similarly, appealing to the (conditional) H\"older's inequality, and Lemmas \ref{lema3} and \ref{lema4},
we have:
\begin{equation*} \begin{split}
\E_{t_{n-1}} [\vert I_2 \vert^{p}] & \leq \delta^{p-1} \,\E_{t_{n-1}} \biggl[ \int_0^{\delta}
\bigg\Vert \int_{t_n-\delta}^{t_n}
\int_{\R^d} \Gamma(t-s, x-z)
b'(u(s,z)) D_{t_n-r}u(s,z) \, dzds \bigg\Vert^{2p}_{\mathcal{H}} dr \biggr] \\
& \leq \delta^{p-1} \left(\int_{t_n-\delta}^{t_n} \int_{\red}\Gam(t-s,z)\,dz ds\right)^{2p}
\sup_{ \stackrel{s \in [0, \del]}{z\in \red} } \E_{t_{n-1}} \biggl[ \Vert D_{t_n-\cdot} u(t_n-s,z) \Vert^{2p}_{\mathcal{H}_{\delta}} \biggr] \\
&\leq \delta^{p-1} \left(\int_{t_n-\delta}^{t_n} \int_{\red} \Gam(t-s,z)\,dz ds\right)^{2p} \, \Phi(\del)^p.
\end{split}
\end{equation*}
Observe that, because $\Gam$ is a Gaussian density,
\[
\bar{I}_0:= \int_{t_n-\delta}^{t_n} \int_{\red} \Gam(t-s,z)\, dz ds \leq C\, \del.
\]
Putting together the bounds obtained so far, we have proved that:
\begin{equation*}
\E_{t_{n-1}} [\vert 2I_1+2I_2 \vert^p] \leq \delta^{p-1} \Phi(\del)^p
\left( I_0^p + \bar{I}_0^p\right) \qquad \text{a.s.}
\end{equation*}
Thus, the (conditional) Chebyschev's inequality yields
\begin{equation}
\begin{split}
&\P_{t_{n-1}} \biggl\{\Delta_{n-1}^{-1} (g) \int_{t_{n-1}}^{t_n} \Vert D_r F_n \Vert^2_{\mathcal{H}} \, dr <\epsilon\biggr\}\\
& \leq C \biggl( \frac{\sig_0^2}{2} \frac{I_0}{\Delta_{n-1}(g)}-\epsilon \biggr)^{-p}
(\Delta_{n-1}(g))^{-p} \, \delta^{p-1} \Phi(\del)^p \left( I_0^p + \bar{I}_0^p\right) \qquad \text{a.s.}
\end{split}
\label{eq:10}
\end{equation}
At this point, we choose $\delta=\delta(\epsilon)$ such that $\frac{I_0}{\Delta_{n-1}(g)}=\frac{4}{\sig_0^2}\epsilon$.
Thus, condition (\ref{eq:112}) implies that $\frac{4}{\sig_0^2}\epsilon\geq \frac{C \del}{\Delta_{n-1}(g)}$, that is
$\delta \leq C \epsilon$. Hence, plugging this bound in (\ref{eq:10}), we obtain:
\begin{align*}
\P_{t_{n-1}} \biggl\{\Delta_{n-1}^{-1} (g) \int_{t_{n-1}}^{t_n} \Vert D_r F_n \Vert^2_{\mathcal{H}} \, dr <\epsilon\biggr\} &
\leq C \, I_0^{-p} \, \ep^{p-1} \left( I_0^p + \bar{I}_0^p\right) \\
& \leq C\, \ep^{p-1}.
\end{align*}
In order to obtain (\ref{cc}), it suffices to choose $p$ sufficiently large such that $p-1 \geq q$. The proof of (\ref{aux}) is now complete.
\end{proof}

\begin{remark}\label{rmk:heat}
In the above proof, we have used the lower bound (\ref{eq:112}). This has prevented us from proving our main result Theorem \ref{maint} for other type of SPDEs, such as the stochastic wave equation (see Remark \ref{rmk:general-gam}). Indeed, for the latter SPDE, we do not have a kind of time homogeneous lower bound of the form (\ref{eq:112}), which has been a key point in order to conclude the proof of Proposition \ref{prop1}. 
\end{remark}

\medskip

In order to define the approximation sequence $\overline{F}_n$, we proceed similarly as in \cite[p. 442]{Kohatsu:03}.
Precisely, we observe first that
\begin{equation*}
\begin{split}
F_n-F_{n-1}&= \int_{t_{n-1}}^{t_n} \int_{\R^d} \Gamma(t-s, x-y) \sigma(u(s,y)) W(ds, dy) \\
&\qquad +\int_{t_{n-1}}^{t_n} \int_{\R^d} \Gamma(t-s, x-y) b(u(s,y))  dsdy.
\end{split}
\end{equation*}
Our aim is to find a Taylor-type expansion of the two terms above. This will be done by applying the mean value theorem to the
functions $\sigma$ and $b$ around the point $u_{n-1}(s,y)$ defined by
\begin{equation*}
\begin{split}
u_{n-1}(s,y)&= \int_{\red} \Gam(s,y-z)u_0(z)dz+ \int_0^{t_{n-1}} \int_{\R^d} \Gamma(s-r, y-z) \sigma(u(r,z)) W(dr, dz) \\
& \qquad +\int_{0}^{t_{n-1}} \int_{\R^d} \Gamma(s-r, y-z) b(u(r,z))  \, dz dr,
\end{split}
\end{equation*}
where $(s,y)\in [t_{n-1},t_n]\times \red$. We clearly have that $u_{n-1}(s,y)$ is ${\mathcal{F}}_{t_{n-1}}$-measurable and  belongs to $\D^{\infty}$.

We next consider the difference:
\begin{equation} \label{difu}
\begin{split}
u(s,y)-u_{n-1}(s,y)&= \int_{t_{n-1}}^s \int_{\R^d} \Gamma(s-r, y-z) \sigma(u(r,z)) W(dr, dz) \\
&\qquad +\int_{t_{n-1}}^s \int_{\R^d}  \Gamma(s-r, y-z) b(u(r,z)) drdz.
\end{split}
\end{equation}
We have the following estimate for the norm in $\mathbb{D}_{t_{n-1},s}^{m,p}$ of the above difference:
\begin{lem} \label{lema8}
For all $m \in \mathbb{N}$ and $p>1$, there exists a constant $C$ such that, for any $s \in[t_{n-1}, t_n]$, we have:
\begin{equation*}
\Vert u(s,y)-u_{n-1}(s,y) \Vert^{t_{n-1},s}_{m,p} \leq C \, (\Phi(s-t_{n-1}))^{1/2}.
\end{equation*}
\end{lem}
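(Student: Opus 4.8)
The plan is to mimic the induction-on-$m$ scheme already used in the proof of Lemma \ref{lema4}, now applied to the difference $u(s,y)-u_{n-1}(s,y)$ rather than to $u$ itself; the point is that this difference solves the \emph{same} type of stochastic evolution equation as $u$ but with the time integrals started at $t_{n-1}$ instead of at $0$, so all the kernel estimates that produced $\Phi(\delta)^{mp}$ will now produce $\Phi(s-t_{n-1})^{mp/2}$ raised to the appropriate power. Concretely, fix $n$ and write $\tau=t_{n-1}$. For $m=0$, from (\ref{difu}) one bounds $\E_\tau(|u(s,y)-u_{\tau}(s,y)|^{2p})$ by the sum of the $2p$-th moments of the stochastic and pathwise integrals; applying Lemma \ref{lema2} and Lemma \ref{lema3} (the conditional moment estimates for Hilbert-space-valued integrals) together with $\sup_{s,z}\E|u(s,z)|^{2p}<\infty$ and the boundedness of $\sigma,b$, one gets a bound $C\int_\tau^s(J(s-r)+1)\,dr \leq C\,\Phi(s-\tau)+C(s-\tau)$, and since $\Phi(s-\tau)\geq C(s-\tau)$ by (\ref{eq:112}), this is $\leq C\,\Phi(s-\tau)$, giving the $m=0$ case with exponent $p$ in $\Phi$ (i.e. $(\Phi(s-\tau))^{1/2}$ after taking the $2p$-th root, as claimed).

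For the inductive step, I would differentiate (\ref{difu}) $m$ times in the Malliavin sense. By Proposition \ref{prop:dinf} (applied to the equation with lower limit $t_{n-1}$, which has the same structure), $D^m(u(s,y)-u_{\tau}(s,y))$ satisfies an equation of the form: a ``$Z^m$'' term built from $\Gamma(s-s_j,\cdot)D^{m-1}\sigma(u(s_j,\cdot))$ with $s_j\in[\tau,s]$, plus a stochastic integral over $[\tau,s]$ of $\Gamma(s-r,\cdot)[\Delta^m(\sigma,u)+D^m u\,\sigma'(u)]$, plus the analogous pathwise integral with $b$. Taking $\|\cdot\|_{\mathcal{H}^{\otimes m}_{t_{n-1},s}}^{2p}$ and $\E_\tau$, and splitting into three terms $B_1,B_2,B_3$ exactly as in Lemma \ref{lema4}, the $B_1$ term is handled by the Cauchy--Schwarz/Hölder argument of \cite[Lemma 2]{Sanz:04} and yields $C(\Phi(s-\tau))^p\sup_{(r,z)}\E_\tau(\|D^{m-1}\sigma(u(r,z))\|^{2p}_{\mathcal{H}^{\otimes(m-1)}_{t_{n-1},s}})$; here, crucially, the inner supremum is $\leq C(\Phi(s-\tau))^{(m-1)p}$ by Lemma \ref{lema4} itself (the already-proven uniform estimate), so $B_1\leq C(\Phi(s-\tau))^{mp}$. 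The $B_2$ and $B_3$ terms produce, via Lemma \ref{lema2}, Lemma \ref{lema3} and the Leibniz rule, a contribution $C(\Phi(s-\tau))^{mp}$ (from the $\Delta^m$ pieces, which only involve derivatives up to order $m-1$, again controlled by Lemma \ref{lema4}) plus $C_1\int_\tau^s F(r)(J(s-r)+1)\,dr$, where $F(r):=\sup_{(\rho,z)\in[\tau,r]\times\red}\E_\tau(\|D^m(u(\rho,z)-u_\tau(\rho,z))\|^{2p}_{\mathcal{H}^{\otimes m}_{t_{n-1},r}})$. One then has $F(s)\leq C(\Phi(s-\tau))^{mp}+C_1\int_\tau^s F(r)(J(s-r)+1)\,dr$, and Gronwall's lemma \cite[Lemma 15]{Dalang:99} closes the induction, giving $F(s)\leq C(\Phi(s-\tau))^{mp}$ and hence the claimed bound after taking the $2p$-th root.

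The main technical obstacle, as in Lemma \ref{lema4} and Remark \ref{rmk:general-gam}, is the careful handling of the $B_1$-type term: one must not literally have $D^{m-1}\sigma(u)$ against $\mathcal{H}^{\otimes m}$-norms of products of a deterministic kernel $\Gamma(s-s_j,\cdot)$ with a random function, and the rigorous treatment requires the smoothing-of-$\Gamma$-by-an-approximate-identity device and the Fubini-type manipulation of \cite[Lemma 2]{Sanz:04}, so that the ``extra'' space variable $z_j$ can be integrated out against $\mu$ to give a factor $J(s-s_j)$ while the remaining $m-1$ variables are kept. A secondary point is bookkeeping: one must keep the norms in $\mathcal{H}^{\otimes m}_{t_{n-1},s}$ (not $_{t_{n-1},t_n}$) throughout, using monotonicity of these norms in the time interval only where it is actually valid, and invoke (\ref{eq:112}) to absorb the pathwise-integral contributions of the form $C(s-\tau)$ into $\Phi(s-\tau)$. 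Everything else is a routine repetition of the estimates in the proof of Lemma \ref{lema4}, now with the role of $\delta$ played by $s-t_{n-1}$.
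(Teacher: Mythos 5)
Your proposal is correct in substance, but it takes a different route from the paper, which disposes of Lemma \ref{lema8} in a few lines: it first notes that Lemma \ref{lema4} (applied with $a=t_{n-1}$, $e=s$, $\delta=s-t_{n-1}$) gives $\sup_{(\tau,z)\in[t_{n-1},s]\times\red}\E_{t_{n-1}}\bigl[\Vert D^{j}u(\tau,z)\Vert^{p}_{\hac^{\otimes j}_{t_{n-1},s}}\bigr]\leq C\,\Phi(s-t_{n-1})^{jp/2}$ for all $j\leq m$, and then applies the appendix Lemma \ref{lema14} directly to $X=u(s,y)-u_{n-1}(s,y)$ with $X_0=0$, $f=b$, $g=\sigma$, $v=u$, $I_i\equiv 1$, $\gamma=0$, concluding with (\ref{eq:112}); all the Leibniz/H\"older/Lemma \ref{lema2} estimates you carry out by hand are packaged inside the proof of Lemma \ref{lema14}, so no induction on $m$ and no Gronwall argument are needed. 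Your re-run of the Lemma \ref{lema4} scheme on the difference does reach the same bounds and is sound (your $F(r)$ indeed closes the Gronwall loop because $D^{m}u_{n-1}$ vanishes for derivative times beyond $t_{n-1}$), but it contains an avoidable redundancy: by that same vanishing, for $j\geq 1$ the terms $\E_{t_{n-1}}\bigl[\Vert D^{j}(u-u_{n-1})(s,y)\Vert^{p}_{\hac^{\otimes j}_{t_{n-1},s}}\bigr]$ coincide with those of $u$ itself and are already controlled by Lemma \ref{lema4} (and $\Phi(s-t_{n-1})^{j/2}\leq C\Phi(s-t_{n-1})^{1/2}$ since $\Phi$ is bounded on $[0,T]$), and the $D^{m}u$ appearing in your $B_2$, $B_3$ is likewise bounded directly by Lemma \ref{lema4}; hence only your $m=0$ computation (Lemmas \ref{lema2} and \ref{lema3}, boundedness of $\sigma$ and $b$, and (\ref{eq:112})) is genuinely new. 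The paper's organization buys brevity and reuse of Lemma \ref{lema14}, which is needed anyway for the residues and the order-$k$ processes later in Section \ref{sec:lower-bound}; yours buys independence from that auxiliary lemma at the cost of repeating its estimates. A small further point: for the heat kernel no smoothing by an approximation of the identity is required, since $\Gamma(t)$ is a smooth function (as used in the proof of Lemma \ref{lema4}); that device only becomes necessary in the more general setting of Remark \ref{rmk:general-gam}.
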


\begin{proof}
First, observe that Lemma \ref{lema4} implies that,  for all $j=0,\dots,m$:
\begin{equation*}
\sup_{(\tau,z) \in [t_{n-1},s] \times \R^d}  E_{t_{n-1}} \left( \|D^j u(\tau,z)\|^p_{\hac_{t_{n-1},s}^{\otimes j}} \right) 
\leq C \, \left( \Phi(s-t_{n-1})\right)^{\frac{jp}{2}},\qquad \text{a.s.} 
\end{equation*}
At this point, we can apply Lemma \ref{lema14} in the following situation: $X=u(s,y)-u_{n-1}(s,y)$, $X_0=0$, $f=b$, $g=\sigma$, $v=u$, $I_i(s,y)\equiv1$, $a=t_{n-1}$, $b=s$, $t=s$, $\gamma=0$; we point out that here we are doing an abuse of notation, since $b$ in the present section denotes a different type of object in comparison with the statement of Lemma \ref{lema14} (here $b$ is the drift coefficient while in the lemma it is simply a time parameter). Hence, one ends up with 
\begin{equation*} \begin{split}
\Vert u(s,y)-u_{n-1}(s,y) \Vert^{t_{n-1},s}_{m,p} &\leq C\, \left\{ (s-t_{n-1})^p+\left(\int_{t_{n-1}}^s J(s-r)dr\right)^{\frac{p}{2}} \right\}^{\frac 1p}\\
&\leq C (\Phi(s-t_{n-1}))^{1/2},
\end{split}
\end{equation*}
where the constant $C$ depends on $(m,p,T)$ and we recall that $J(r)=\int_{\red} |\tf \Gam(r)(\xi)|^2 \mu(d\xi)$. The latter inequality has been obtained by applying (\ref{eq:112}). Therefore, we conclude the proof. 
\end{proof}

Let us start with the decomposition of the term $F_n-F_{n-1}$. For this, we set $u(\lambda, s,y):=\lambda u(s,y)+(1-\lambda) u_{n-1}(s,y)$, for $\lambda \in [0,1]$. Then, by the mean value theorem we have:
\begin{equation} \label{aauu}
\begin{split}
&F_n-F_{n-1}= \int_{t_{n-1}}^{t_n} \int_{\R^d} \Gamma(t-s, x-y) \sigma(u_{n-1}(s,y)) W(ds, dy) \\
&\qquad +\int_{t_{n-1}}^{t_n} \int_{\R^d}  \Gamma(t-s, x-y) b(u_{n-1}(s,y)) \, dyds \\
&\qquad +\int_{t_{n-1}}^{t_n} \int_{\R^d} \Gamma(t-s, x-y) \biggl(\int_0^1 \sigma'(u(\lambda, s,y))\, d \lambda\biggr)
(u(s,y)-u_{n-1}(s,y)) W(ds, dy) \\
&\qquad +\int_{t_{n-1}}^{t_n} \int_{\R^d} \Gamma(t-s, x-y) \biggl(\int_0^1 b'(u(\lambda, s,y))\, d \lambda\biggr)
(u(s,y)-u_{n-1}(s,y))  \, dyds.
\end{split}
\end{equation}
As we will make precise below, the first and second terms on the right-hand side of (\ref{aauu}) will be called {\it{processes of order}} $1$ and $2$, respectively, while the third and fourth terms will be called {\it{residues of order}} $1$ and $2$, respectively.

At this point, we need to introduce some notation, namely we are going to define what we understand by {\it{processes and residues of order $k\in \mathbb{N}$}}. The former will be denoted by $J_k$ and the latter by $R_k$. In all the definitions that follow, we assume that $s\leq t$.

For $k=1$, we define:
\begin{equation*}
\begin{split}
J_1(s,t,x)= \int_{t_{n-1}}^{s} \int_{\R^d} \Gamma(t-r, x-y) \sigma(u_{n-1}(r,y)) W(dr, dy).
\end{split}
\end{equation*}
For $k \geq 2$, the process $J_k$ of order $k$ is defined either of the form:
\begin{equation} \label{Jk1}
J_k(s,t,x)=\int_{t_{n-1}}^{s} \int_{\R^d} \Gamma(t-r, x-y) \sigma^{(\ell)}(u_{n-1}(r,y)) \prod_{j=1}^{\ell} J_{m_j}(r,r,y) W(dr, dy),
\end{equation}
where $\ell \leq k-1$ and $m_1+ \cdots +m_{\ell}=k-1$, or
\begin{equation} \label{Jk1b}
\begin{split}
J_k(s,t,x)=\int_{t_{n-1}}^{s} \int_{\R^d} \Gamma(t-r, x-y) b^{(\ell)}(u_{n-1}(r,y)) \prod_{j=1}^{\ell} J_{m_j}(r,r,y) \, dy dr,
\end{split}
\end{equation}
where $\ell \leq k-2$ and $m_1+ \cdots +m_{\ell}=k-2$; in the case $k=2$, the process $J_k$ is defined by
\begin{equation*}
\begin{split}
J_2(s,t,x)=\int_{t_{n-1}}^{s} \int_{\R^d} \Gamma(t-r, x-y) b(u_{n-1}(r,y))  \, dy dr.
\end{split}
\end{equation*}
It is clear that, for any $k\in \mathbb{N}$, the set of processes of order $k$ is finite. Let $\mathcal{A}_k$ be an index set for the family of processes of order $k$, so that if $\alpha \in \mathcal{A}_k$, the corresponding process of order $k$ indexed by $\alpha$ will be denoted by $J_k^\al$.

The residues $R_k$ of order $k$ are defined as follows. For $k=1$, $R_1$ is defined to be either:
\begin{equation} \label{res1a}
\begin{split}
R_1(s,t,x)=\int_{t_{n-1}}^{s} \int_{\R^d} \Gamma(t-r, x-y) \biggl(\int_0^1 \sigma'(u(\lambda, r,y))\, d \lambda\biggr)
(u(r,y)-u_{n-1}(r,y)) W(dr, dy)
\end{split}
\end{equation}
or
\begin{equation} \label{res1b}
\begin{split}
R_1(s,t,x)=\int_{t_{n-1}}^{s} \int_{\R^d} \Gamma(t-r, x-y)  b(u(r,y)) \, dydr .
\end{split}
\end{equation}
For $k=2$, set:
\begin{equation} \label{r2b}
\begin{split}
R_2(s,t,x)=\int_{t_{n-1}}^{s} \int_{\R^d} \Gamma(t-r, x-y)\biggl(\int_0^1 b'(u(\lambda, r,y))\, d \lambda\biggr)
(u(r,y)-u_{n-1}(r,y))  dr dy.
\end{split}
\end{equation}
Eventually, for any $k\geq 3$, the residue of order $k$ can be one of the following four possibilities. First, it can be either:
\begin{equation} \label{Rk1}
\begin{split}
R_k(s,t,x)&=\frac{1}{(k-1)!} \int_{t_{n-1}}^{s} \int_{\R^d} \Gamma(t-r, x-y) \biggl(\int_0^1 (1-\lambda)^{k-1} \sigma^{(k)}(u(\lambda, r,y))\, d \lambda\biggr) \\
& \qquad \times (u(r,y)-u_{n-1}(r,y))^k W(dr, dy)
\end{split}
\end{equation}
or
\begin{equation}  \label{Rk2}
\begin{split}
R_k(s,t,x)=\int_{t_{n-1}}^{s} \int_{\R^d} \Gamma(t-r, x-y) \sigma^{(\ell)}(u_{n-1}(r,y)) \prod_{j=1}^{\ell} I_{m_j}(r,y) W(dr, dy),
\end{split}
\end{equation}
where $I_{m_j}(r,y)$ is either $R_{m_j}(r,r,y)$ or $J_{m_j}(r,r,y)$, but at least there is one $j$ such that
$I_{m_j}(r,y)=R_{m_j}(r,r,y)$. As before $\ell \leq k-1$ and $m_1+ \cdots +m_{\ell}=k-1$. Secondly, $R_k$ can be either:
\begin{equation}  \label{Rk3}
\begin{split}
R_k(s,t,x)&=\frac{1}{(k-2)!} \int_{t_{n-1}}^{s} \int_{\R^d}\Gamma(t-r, x-y) \biggl(\int_0^1 (1-\lambda)^{k-2} b^{(k-1)}(u(\lambda, r,y))\, d \lambda\biggr) \\
& \qquad \times (u(r,y)-u_{n-1}(r,y))^{k-1}  \, dy dr
\end{split}
\end{equation}
or
\begin{equation}  \label{Rk4}
\begin{split}
R_k(s,t,x)=\int_{t_{n-1}}^{s} \int_{\R^d}  \Gamma(t-r, x-y)  b^{(\ell)}(u_{n-1}(r,y)) \prod_{j=1}^{\ell} I_{m_j}(r,y) \, dy dr,
\end{split}
\end{equation}
where $\ell \leq k-2$, $m_1+ \cdots +m_{\ell}=k-2$ and $I_{m_j}$ are as in (\ref{Rk2}). We will denote here by $\mathcal{B}_k$ any index set for the residues of order $k$ and $R_k^\al$ the element corresponding to $\al\in \mathcal{B}_k$.

\medskip

Having all these notation in mind, we have the following decomposition for the difference $F_n-F_{n-1}$. 
For the statement, let us remind that the drift and diffusion coefficients $b$ and $\sigma$ are assumed to be of class $\cinf_b$. 
\begin{lem}\label{lemma19}
For all $K \in \mathbb{N}$ we have:
\begin{equation} \label{ll}
F_n-F_{n-1}=\sum_{k=1}^K \sum_{\alpha \in \mathcal{A}_k} C_1(\alpha, k) J_k^{\alpha}(t_n,t,x)+
\sum_{\alpha \in \mathcal{B}_K} C_2(\alpha, K) R_K^{\alpha}(t_n,t,x),
\end{equation}
where $C_1(\al,k)$ and $C_2(\al,K)$ denote some positive constants.
\end{lem}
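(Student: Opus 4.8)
The plan is to prove the decomposition~(\ref{ll}) by induction on $K$. The base case $K=1$ is precisely the identity~(\ref{aauu}): the first two terms on its right-hand side are, by definition, $J_1(t_n,t,x)$ (with $\sigma$) and $J_2$ restricted appropriately---wait, more carefully, the first term is the process $J_1^\alpha(t_n,t,x)$ and the second term is itself a residue $R_1$ of the type~(\ref{res1b}), while the third and fourth terms are residues $R_1$ of type~(\ref{res1a}) and $R_2$ of type~(\ref{r2b}). So for $K=1$ one reads off~(\ref{ll}) directly from~(\ref{aauu}) after checking that every summand on the right of~(\ref{aauu}) is, by the definitions given, either a process of order $1$ or a residue of order $1$ (the drift term being grouped into the residues, consistently with the convention that residues of order $1$ include~(\ref{res1b})). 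This bookkeeping is the content of the base step.

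For the inductive step, assume~(\ref{ll}) holds for some $K\geq 1$. The only terms that are not yet of the final form are the residues $R_K^\alpha(t_n,t,x)$, $\alpha\in\mathcal{B}_K$. I would take each such residue and, inside its stochastic (or pathwise) integral, expand the factor $\sigma^{(\ell)}(u(\lambda,r,y))$ or $b^{(\ell)}(u(\lambda,r,y))$---or, in the cases~(\ref{Rk2}),(\ref{Rk4}), expand the inner residue factors $R_{m_j}(r,r,y)$---using once more the mean value theorem / Taylor expansion of $\sigma,b$ around $u_{n-1}(r,y)$, exactly as was done to pass from $F_n-F_{n-1}$ to~(\ref{aauu}). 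Each application of Taylor's formula with integral remainder splits a residue of order $k$ into a finite sum of processes of order $k+1$ (the terms where the expansion produces $\sigma^{(\ell)}(u_{n-1})\prod J_{m_j}$, matching~(\ref{Jk1})--(\ref{Jk1b})) plus new residues of order $k+1$ (the terms carrying an integral remainder $\int_0^1(1-\lambda)^{k}\sigma^{(k+1)}(u(\lambda,\cdot))\,d\lambda$ or a product $\prod I_{m_j}$ with at least one genuine residue factor, matching~(\ref{Rk1})--(\ref{Rk4})). Summing these contributions over $\alpha\in\mathcal{B}_K$ and absorbing the combinatorial factors into new positive constants $C_1(\cdot,K+1)$, $C_2(\cdot,K+1)$ yields~(\ref{ll}) with $K$ replaced by $K+1$. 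Since $b,\sigma\in\mathcal{C}^\infty_b$, all the derivatives appearing are well defined and bounded, so every integral written down makes sense (as a Hilbert-space-valued stochastic or pathwise integral in the sense of Section~\ref{Hilbert-integrals}).

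The step I expect to require the most care is verifying that the Taylor expansion of a residue of order $k$ produces \emph{exactly} objects fitting the prescribed forms~(\ref{Jk1})--(\ref{Jk1b}) and~(\ref{Rk1})--(\ref{Rk4})---in particular that the order bookkeeping is consistent: differentiating $\sigma^{(\ell)}(u_{n-1})$ is not what happens (that argument is frozen at $u_{n-1}$), rather it is the \emph{difference} $u-u_{n-1}$ and the inner $J_{m_j}$ or $R_{m_j}$ factors that get expanded, and one must check the index constraints $\ell\le k-1$, $\sum m_j = k-1$ for the $\sigma$-type terms (resp. $\ell\le k-2$, $\sum m_j=k-2$ for the $b$-type terms) are propagated correctly, together with the ``at least one $R$ factor'' condition in~(\ref{Rk2}),(\ref{Rk4}). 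This is essentially a careful unwinding of the recursive definitions and is the analogue of~\cite[Lemma~7]{Kohatsu:03}; once the correspondence is set up, the induction closes routinely.
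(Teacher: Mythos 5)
Your plan is essentially the paper's proof: induction on $K$, the one-step Taylor expansion with integral remainder around $u_{n-1}$ (realized as an integration by parts in the $\lambda$-integral) applied to residues of the forms (\ref{Rk1}) and (\ref{Rk3}), the decomposition of the difference $u-u_{n-1}$ into $J_1$ plus order-one residues (equation (\ref{difu2}) in the paper) to turn the frozen-coefficient terms into objects of the next order, and a recursive treatment of the inner factors for the forms (\ref{Rk2}) and (\ref{Rk4}) --- which the paper makes precise by strengthening the induction hypothesis to ``every residue of order $k<K$ decomposes into a sum of processes and residues of order $k+1$'' and applying it to the inner factors $R_{m_j}$, rather than by re-expanding them directly.

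The one concrete slip is your base case. Identity (\ref{aauu}) is not the $K=1$ instance of (\ref{ll}): its second term, $\int_{t_{n-1}}^{t_n}\int_{\R^d}\Gamma(t-s,x-y)\,b(u_{n-1}(s,y))\,dy\,ds$, is by definition the process $J_2(t_n,t,x)$ of order two --- not a residue of type (\ref{res1b}), which carries $b(u)$ rather than $b(u_{n-1})$ --- and its fourth term is the residue (\ref{r2b}), which is of order two as well; neither may appear in (\ref{ll}) when $K=1$. The correct base case is read off from the expression for $F_n-F_{n-1}$ before the drift is expanded: apply the mean value theorem to $\sigma$ only, so that $F_n-F_{n-1}$ equals $J_1(t_n,t,x)$ plus the residues (\ref{res1a}) and (\ref{res1b}), the drift integral $\int_{t_{n-1}}^{t_n}\int_{\R^d}\Gamma(t-s,x-y)\,b(u(s,y))\,dy\,ds$ being kept intact as the order-one residue (\ref{res1b}). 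This is what the paper does; with this correction, and with the strengthened induction hypothesis above making your ``expand the inner residue factors'' step rigorous, your argument closes exactly as in the paper.
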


\begin{proof}
We will use an induction argument with respect to $K$. First, the case $K=1$ follows applying the mean value theorem to the function $\sigma$ around the point $u_{n-1}$ in the first term on the right-hand side of (\ref{aauu}). 

In order to illustrate our argument to tackle the general case, let us prove our statement for the case $K=2$. In fact, consider the decomposition (\ref{aauu}) and observe that it suffices to prove that the residue of order $1$ (\ref{res1a}) can be written 
as a sum of a process of order $2$ and residues of order $2$. This is achieved by integrating by parts in the integral with respect to $d\lam$:
\begin{equation} \label{rhs}
\begin{split}
&\int_{t_{n-1}}^{t_n} \int_{\R^d} \Gamma(t-s, x-y) \biggl(\int_0^1 \sigma'(u(\lambda, s,y))\, d \lambda\biggr)
(u(s,y)-u_{n-1}(s,y)) W(ds, dy) \\
&=\int_{t_{n-1}}^{t_n} \int_{\R^d} \Gamma(t-s, x-y) \sigma'(u_{n-1}(s,y))
(u(s,y)-u_{n-1}(s,y)) W(ds, dy) \\
&+\int_{t_{n-1}}^{t_n} \int_{\R^d} \Gamma(t-s, x-y) \biggl(\int_0^1 (1-\lambda) \sigma^{(2)}(u(\lambda, s,y))\, d \lambda\biggr)
(u(s,y)-u_{n-1}(s,y))^2 W(ds, dy).
\end{split}
\end{equation}
Note that, one the one hand, by definition the second term on the right-hand side of (\ref{rhs}) is a residue of order $2$. On the other hand, applying the mean value theorem inside the stochastic integral in (\ref{difu}), one obtains the following decomposition:
\begin{equation} \label{difu2}
\begin{split}
u(s,y)-u_{n-1}(s,y)=J_1(s,s,y)+\sum_{\alpha \in \mathcal{B}_1} R_1^{\alpha}(s,s,y).
\end{split}
\end{equation}
Hence, the first term on the right-hand side of (\ref{rhs}) can be written as a sum of a process of order $2$ and two residues of order $2$. This concludes the proof of (\ref{ll}) for $K=2$. We point out that the same arguments used in the latter final part of the case $K=2$ would let us conclude that the residue of order 2 (\ref{r2b}) can be decomposed as the sum of a process of order $3$ and residues of order $3$. 

Now, we assume that (\ref{ll}) holds for $K$, and that any residue of order $k<K$ can be decomposed as a sum of processes of order $k+1$ and residues of order $k+1$. Then, we consider any residue term of order $K$ in (\ref{ll}).
If this residue of order $K$ is of the form (\ref{Rk1}), by integrating by parts one can rewrite it as
\begin{equation*} 
\begin{split}
&\frac{1}{K !} \int_{t_{n-1}}^{t_n} \int_{\R^d} \Gamma(t-s, x-y) \sigma^{(K)}(u_{n-1}(s,y))
(u(s,y)-u_{n-1}(s,y))^K W(ds, dy) \\
&+\frac{1}{K !}
\int_{t_{n-1}}^{t_n} \int_{\R^d} \Gamma(t-s, x-y) \biggl(\int_0^1 (1-\lambda)^K \sigma^{(K+1)}(u(\lambda, s,y))\, d \lambda\biggr) \\
& \qquad \times 
(u(s,y)-u_{n-1}(s,y))^{K+1} W(ds, dy).
\end{split}
\end{equation*}
Thus, by (\ref{difu2}), the above expression can be written as a sum of processes of order $k+1$
and residues of order $k+1$. The same computations can be done if the residue of order $K$ is of the form (\ref{Rk3}). 
Eventually, in case the residue is
of the form (\ref{Rk2}) or (\ref{Rk4}), we will make use of the induction hypothesis. Namely, we will be able to write any $R_{m_j}(K,K,y)$ therein as a sum of processes of order
$m_j+1$ and residues of order $m_j+1$. Therefore, the resulting terms will be sums of processes of order $k+1$ and residues of order $k+1$, which yields that the desired decomposition holds for $K+1$. This concludes the proof.
\end{proof}

\medskip

At this point, we are in position to define the approximation sequence  $\overline{F}_n=\overline{F}_n^K$ needed in hypothesis {\bf{(H1)}}. For all $K\in \mathbb{N}$, set:
\begin{equation*}
\overline{F}_n = \Delta_{n-1} (g)^{\frac{K+1}{2}} Z_n+ F_{n-1}+\sum_{k=1}^K \sum_{\alpha \in \mathcal{A}_k} C_1(\alpha, k) J_k^{\alpha}(t_n,t,x),
\end{equation*}
where the last term above is the one given in Lemma \ref{lemma19}.
Note that here we are dealing with $1$-dimensional random vectors, that is random variables. According to the expression of $\overline{F}_n$ in assumption {\bf{(H1)}}, we have 
\[
h(s,y)=\Gamma(t-s, x-y) \sigma(u_{n-1}(s,y)) \quad \text{and} \quad G_n^K=\sum_{k=2}^K \sum_{\alpha \in \mathcal{A}_k} C_1(\alpha, k) J_k^{\alpha}(t_n,t,x),
\]
and we have taken $\gam=\frac 12$.
The boundedness of $\sig$ and the definition of the processes $J_k^\al$ guarantee that all the conditions of hypothesis {\bf (H1)} are satisfied. Let us also remind that $\{Z_n,\, n\in \mathbb{N}\}$ is an i.i.d. sequence of standard Gaussian random variables which is independent of the noise $W$ (see the explanation right after hypothesis {\bf{(H2d)}}), and that here we consider $g(s)=\Gam(t-s)$, so that
\[
\Del_{n-1}(g)=\int_{t_{n-1}}^{t_n} \int_{\red} |\tf \Gam(t-s)(\xi)|^2 \,\mu(d\xi) ds.
\]

\medskip

We next verify condition {\bf (H2a)}. For this, we observe that Lemma \ref{lemma19} yields:
$$
F_n-\overline{F}_n = -\Delta_{n-1} (g)^{\frac{K+1}{2}} Z_n+\sum_{\alpha \in \mathcal{B}_K} C_2(\alpha, K) R_K^{\alpha}(t_n,t,x).
$$
Then, hypothesis {\bf (H2a)} is a consequence of the following lemma:
\begin{lem}
For all $m \in \mathbb{N}$ and $p>1$, there exists a constant $C$ such that, for any $K \in \mathbb{N}$, we have:
 \begin{equation}\label{eq:101}
\Vert R_K(t_n,t,x) \Vert^{t_{n-1},t_n}_{m, p}\leq C\, \Delta_{n-1} (g)^{\frac{K+1}{2}} \qquad \text{a.s.}
\end{equation}
where $R_K$ denotes any of the four types of residues of order $K$.
\end{lem}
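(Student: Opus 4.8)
\medskip
\noindent\emph{Proof strategy.}
The plan is to argue by induction on $K$, run in tandem with an auxiliary estimate for the \emph{processes} of every order, namely that $\Vert J_k^{\al}(s,t,x)\Vert^{t_{n-1},s}_{m,p}\le C\,\Phi(s-t_{n-1})^{k/2}$ for all $k\in\N$, $\al\in\mathcal{A}_k$ and $s\in[t_{n-1},t_n]$, with $C=C(m,p,T,k)$. I would first prove this process bound by induction on $k$: the base cases $J_1$, $J_2$ have bounded integrands (here $\sig,b\in\cinf_b$ and $u_{n-1}(r,y)$ lies in $\D^{\infty}$ with conditional norms bounded uniformly in $(r,y)$ by Proposition~\ref{prop:dinf} / Lemma~\ref{lema4}), so the conditional stochastic-integral estimate (Lemma~\ref{lema2}) together with the inequality $\int_{t_{n-1}}^{s}J(t-r)\,dr\le\Phi(s-t_{n-1})$ gives $\Phi(s-t_{n-1})^{1/2}$, while the pathwise estimate (Lemma~\ref{lema3}), the bound $\sup_{r}\int_{\red}\Gam(r,y)\,dy<\infty$ and (\ref{eq:112}) give $(s-t_{n-1})\le C\,\Phi(s-t_{n-1})$; for $k\ge2$ and a process of the form (\ref{Jk1}) or (\ref{Jk1b}), the Leibniz rule and H\"older's inequality bound the conditional norm of the integrand $\sig^{(\ell)}(u_{n-1})\prod_{j}J_{m_j}(r,r,y)$, respectively $b^{(\ell)}(u_{n-1})\prod_{j}J_{m_j}(r,r,y)$, by $C\prod_{j}\Phi(r-t_{n-1})^{m_j/2}=C\,\Phi(r-t_{n-1})^{(k-1)/2}$ (resp. $(k-2)/2$) by the induction hypothesis, and the outer integration then produces $\Phi(s-t_{n-1})^{k/2}$ as above.

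For the residues themselves I would induct on $K$. The cases $K=1,2$ follow at once from the same ingredients: in (\ref{res1b}) the integrand is bounded, whereas in (\ref{res1a}) and (\ref{r2b}) the factor $u(r,y)-u_{n-1}(r,y)$ contributes $\Phi(r-t_{n-1})^{1/2}$ by Lemma~\ref{lema8} --- here I would also record that $\int_0^1\sig'(u(\lam,r,y))\,d\lam$, and likewise each $\lam$-average $\int_0^1(1-\lam)^{j}\phi^{(j)}(u(\lam,r,y))\,d\lam$ with $\phi\in\{\sig,b\}$, has conditional norms bounded uniformly in the remaining variables, by Minkowski's integral inequality together with the fact that $u(\lam,r,y)$ is a convex combination of $u(r,y)$ and $u_{n-1}(r,y)$, both in $\D^\infty$, and $\sig,b\in\cinf_b$. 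For $K\ge3$ one treats the four shapes (\ref{Rk1})--(\ref{Rk4}): in (\ref{Rk1}) (resp. (\ref{Rk3})) the factor $(u-u_{n-1})^K$ (resp. $(u-u_{n-1})^{K-1}$) is handled by the Leibniz rule for products, H\"older and Lemma~\ref{lema8}, yielding conditional norm $\le C\,\Phi(r-t_{n-1})^{K/2}$ (resp. $(K-1)/2$); in (\ref{Rk2}) (resp. (\ref{Rk4})) the integrand is $\sig^{(\ell)}(u_{n-1})\prod_{j}I_{m_j}$ (resp. $b^{(\ell)}(u_{n-1})\prod_{j}I_{m_j}$) with $\sum_{j}m_j=K-1$ (resp. $K-2$) and at least one $I_{m_j}=R_{m_j}$, so applying the induction hypothesis to that residue factor (each residue factor contributing one more half-power of $\Phi$ than a process factor of the same order) and the process estimate to the others gives integrand norm $\le C\,\Phi(r-t_{n-1})^{K/2}$ (resp. $(K-1)/2$), any larger power being harmless since $\Phi\le\Phi(T)$. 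In every case, performing the outer stochastic integration (Lemma~\ref{lema2}) or pathwise integration (Lemma~\ref{lema3}) against $\Gam(t-r,x-\cdot)$ and invoking (\ref{eq:112}) reduces the bound to the required power $\Delta_{n-1}(g)^{(K+1)/2}$; finally, since for each $K$ the index sets $\mathcal{A}_k$ and $\mathcal{B}_K$ are finite, summing the finitely many estimates yields (\ref{eq:101}).

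I expect the main obstacle to be this last reduction: passing from the integrand bounds, which involve $\Phi(r-t_{n-1})$ and the kernel $\Gam(t-r,x-\cdot)$, to a clean bound in terms of $\Delta_{n-1}(g)=\int_{t_{n-1}}^{t_n}J(t-s)\,ds$ to the power $(K+1)/2$, uniformly over $n$ and over all partitions of mesh $<\e$. This is where the time-homogeneous lower bound (\ref{eq:112}) (cf. Remark~\ref{rmk:heat}) is essential, and it is the step that forces one to keep careful track, through the recursive definitions (\ref{Jk1})--(\ref{Rk4}), of which kernel carries which time origin and of how the half-powers of $\Phi$ accumulate along the nesting. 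The remaining work --- repeated use of the Leibniz rule on products of several conditionally Malliavin-differentiable factors, splitting the resulting norms by H\"older, and controlling uniformly in every $L^q$ all the compositions $\phi^{(j)}(u_{n-1})$ and the $\lam$-averages $\int_0^1\phi^{(j)}(u(\lam,\cdot))\,d\lam$ --- is routine given $\sig,b\in\cinf_b$ and the uniform $\D^\infty$ bounds on $u$ and $u_{n-1}$. A minor point used throughout is that the conditional norm $\Vert\cdot\Vert^{t_{n-1},r}_{m,p}$ of a quantity built from the noise restricted to $[0,r]$ agrees with $\Vert\cdot\Vert^{t_{n-1},t_n}_{m,p}$, which is what lets the inner norms be pulled under the outer time integral.
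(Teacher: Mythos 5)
Your overall architecture --- a tandem induction on the order of processes and residues, Lemma~\ref{lema8} for the factors $u-u_{n-1}$, the conditional estimates of Lemmas~\ref{lema2} and \ref{lema3} in place of the packaged appendix Lemma~\ref{lema14}, and a final reduction to powers of $\Delta_{n-1}(g)$ --- is the same as the paper's. However, the step you yourself single out as the main obstacle is a genuine gap as you propose to close it. You carry the induction in powers of $\Phi(s-t_{n-1})$ and plan to convert the resulting $\Phi(t_n-t_{n-1})$-powers into $\Delta_{n-1}(g)$-powers ``by (\ref{eq:112})''. But (\ref{eq:112}) gives $t_n-t_{n-1}\leq C\,\Delta_{n-1}(g)$, which is exactly what is needed to absorb the plain time-length factors produced by the pathwise (drift) integrals; it does \emph{not} give $\Phi(t_n-t_{n-1})\leq C\,\Delta_{n-1}(g)$, and that inequality is false in general. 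Since $J$ is nonincreasing, $\Delta_{n-1}(g)=\int_{t-t_n}^{t-t_{n-1}}J(v)\,dv\leq\int_0^{t_n-t_{n-1}}J(v)\,dv=\Phi(t_n-t_{n-1})$, and the discrepancy degenerates for intervals far from $t$: with $d=1$ and $\mu$ the Lebesgue measure, $\Phi(\delta)\asymp\delta^{1/2}$ while $\Delta_{n-1}(g)\asymp\delta\,(t-t_{n-1})^{-1/2}$. Concretely, for the residue (\ref{res1a}) your scheme (Lemma~\ref{lema2} plus Lemma~\ref{lema8}) yields a bound of order $\Delta_{n-1}(g)^{1/2}\,\Phi(t_n-t_{n-1})^{1/2}$, which is not $O(\Delta_{n-1}(g))$ uniformly over the partition; iterating in $\Phi$-powers therefore cannot deliver $\Delta_{n-1}(g)^{(K+1)/2}$.

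The paper does the bookkeeping in the other currency: all inner-factor estimates are required in the form $\bigl(\int_{t_{n-1}}^{s}J(t-r)\,dr\bigr)^{\gamma\alpha_i}$ --- the partial $\Delta_{n-1}(g)$ computed with the \emph{same outer kernel time} $t$ --- which is precisely hypothesis (\ref{lema14b}) of Lemma~\ref{lema14}, and Lemma~\ref{js} is accordingly stated as $\Vert J_k(t_n,t,x)\Vert^{t_{n-1},t_n}_{m,p}\leq C\,\Delta_{n-1}(g)^{k/2}$ rather than in powers of $\Phi(t_n-t_{n-1})$; the bound (\ref{eq:112}) enters only to convert factors of $t_n-t_{n-1}$ coming from the $dy\,ds$ integrals. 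You would need to restate your auxiliary process bound and the inner-factor estimates in this form (and note that this is also the delicate spot in the paper's own argument: the nested factors $J_{m_j}(r,r,y)$ and $u(r,y)-u_{n-1}(r,y)$ carry the ``local'' time $r$, so checking (\ref{lema14b}) for them, e.g.\ via Lemma~\ref{lema8}, is exactly where care is required). A second, smaller omission: when you ``perform the outer stochastic integration by Lemma~\ref{lema2}'' on the norms $\Vert\cdot\Vert^{t_{n-1},t_n}_{m,p}$, the iterated Malliavin derivative of the Wiener integral also produces the terms in which derivatives hit the integrator (the first sum in (\ref{eq:48})); these are of the same order but must be estimated separately, which is what the $A_1$/$A_2$ splitting in the proof of Lemma~\ref{lema14} accomplishes.
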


\begin{proof}
We start by assuming that the residue process $R_K$ is of the form (\ref{Rk1}). Then, we will apply Lemma \ref{lema14} in the following setting: $X_0=0$, $f=0$,
$g(z)=\sigma^{(K)}(\lambda z +(1-\lambda) u_{n-1}(r,y))$,
$I_i(r,y)=u(r,y)-u_{n-1}(r,y)$ for all $i=1,\dots,i_0=K$ and $v=u$. Note that Lemmas \ref{lema4} and \ref{lema8} imply that conditions (\ref{lema14a}) and (\ref{lema14b}) are satisfied with $\gamma=\frac12$, $\alpha_i=1$, $\alpha=K$, respectively (we are again making an abuse of notation since the latter $\al$ has nothing to do with the one in $R_K^\al$ above).
Altogether, we obtain that:
\begin{equation*} \begin{split}
 \Vert R_K(t_n,t,x) \Vert^{t_{n-1},t_n}_{m, p}&\leq C \left(\int_{t_{n-1}}^{t_n} J(t-r)\, dr\right)^{\frac 12 -\frac 1p}\\
&\qquad \times\left\{ \int_{t_{n-1}}^{t_n} 
\left( \int_{t_{n-1}}^s J(t-r)\, dr\right)^{\frac{K p}{2}} J(t-s)\, ds \right\}^{\frac 1p} \\
&\leq C\,  \Delta_{n-1} (g)^{\frac{K+1}{2}}, \qquad \text{a.s.},
\end{split}
\end{equation*}
where we have used the fact that $J(r)=\int_{\red} |\tf \Gam(r)(\xi)|^2 \mu(d\xi)$.

Let us now assume that the residue $R_K$ is of the form (\ref{Rk3}). In this case, we apply Lemma \ref{lema14} in the following situation: $X_0=0$, $g=0$, $f(z)=\sigma^{(K-1)}(\lambda z +(1-\lambda) u_{n-1}(r,y))$,
$I_i(r,y)=u(r,y)-u_{n-1}(r,y)$ for all $i=1,\dots,i_0=K-1$, $v=u$, $\gamma=\frac12$, $\alpha_i=1$, $\alpha=K-1$. Thus, we end up with:
\begin{equation*} \begin{split}
 \Vert R_K(t_n,t,x) \Vert^{t_{n-1},t_n}_{m, p}&\leq C \int_{t_{n-1}}^{t_n} \left(\int_{t_{n-1}}^{s} J(t-r)\, dr\right)^{\frac{K-1}{2}} ds  \\
& \leq C \, (t_n-t_{n-1}) \Delta_{n-1} (g)^{\frac{K-1}{2}} \\
&\leq C \, \Delta_{n-1} (g)^{\frac{K+1}{2}}, \qquad \text{a.s.}
\end{split}
\end{equation*}
In the last inequality we have used the estimate (\ref{eq:112}).

We finally show estimate (\ref{eq:101}) for residues $R_K$ of the form (\ref{Rk2}) (the case (\ref{Rk4}) follows along the same lines). This will be deduced thanks to an induction argument on $K$ together with an application of Lemma \ref{lema14}.
We have already checked the validity of (\ref{eq:101}) for $K=1,2$. Assume that the residues (\ref{Rk2}) of order $k\leq K-1$ satisfy the desired estimate. Then, taking into account that in (\ref{Rk2}) there is at least one residue of order $k<K$ hidden in the product of $I_{m_j}(r,y)$ and Lemma \ref{js} below establishes suitable bounds for processes of order $K$, we can conclude by applying again Lemma \ref{lema14} as follows: $X_0=0$, $f=0$,
$g(z)=\sigma^{(K)}(z)$, $v=u_{n-1}$, $\gamma=\frac12$, $\alpha_i=m_j$, $\alpha=K$. Details are left to the reader. 
\end{proof}

\medskip

A consequence of the above lemma is that condition {\bf{(H2a)}} is satisfied. Moreover, as we have already explained at the beginning of the present section, hypothesis {\bf{(H2b)}}  follows from Proposition \ref{prop1}. Next we prove {\bf{(H2c)}}, that is 
\beq \label{eq:110}
C_1 \leq \Del_{n-1}(g)^{-1} \int_{t_{n-1}}^{t_n} \|h(s)\|^2_\hac \, ds \leq C_2,
\eeq
for some positive constants $C_1, C_2$, where we recall that $h(s,y)=\Gam(t-s,x-y)\sig(u_{n-1}(s,y))$. The upper bound is an immediate consequence of \cite[Theorem 2]{Dalang:99}, since we assume that $\sig$ is bounded. In order to obtain the lower bound  in (\ref{eq:110}), we can use the same argument of \cite[Theorem 5.2]{Nualart:07}. Precisely, let $(\psi_\ep)_\ep$ be an approximation of the identity and define $G_\ep(s,y):=(\psi_\ep * h(s))(y)$, so that $G_\ep(s)\in \mathcal{S}(\red)$. Then, by the non-degeneracy assumption on $\sig$, we can infer that, almost surely:
\begin{align*}
\int_{t_{n-1}}^{t_n} \|h(s)\|^2_\hac \, ds & = \lim_{\ep\rightarrow 0} \int_{t_{n-1}}^{t_n} \|G_\ep(s)\|^2_\hac \, ds \\
& = \lim_{\ep\rightarrow 0} \int_{t_{n-1}}^{t_n} ds \int_{\red} \Lam(dy) \int_{\red} dz \, G_\ep(s,z) G_\ep(s,z-y) \\
& \geq C\, \int_{t_{n-1}}^{t_n} \int_{\red} |\tf \Gam(t-s)(\xi)|^2 \, \mu(d\xi) ds\\
& = C \, \Del_{n-1} (g). 
\end{align*} 
Therefore, we obtain the lower bound in (\ref{eq:110}), which means that {\bf{(H2c)}} is satisfied.

\medskip

Eventually, we check condition {\bf (H2d)}.  To begin with, let us remind that 
\begin{equation*}
G_n^K =\sum_{k=2}^K \sum_{\alpha \in \mathcal{A}_k} C_1(\alpha, k) J_k^{\alpha}(t_n,t,x).
\end{equation*}
Then, hypothesis {\bf (H2d)} is proved in the following lemma:
\begin{lem} \label{js}
For all $m \in \mathbb{N}$ and $p>1$, there exists a constant $C$ such that, for any $k \in \mathbb{N}$, we have:
 \begin{equation*}
\Vert J_k(t_n,t,x) \Vert^{t_{n-1},t_n}_{m, p}\leq C\, \Delta_{n-1} (g)^{\frac{k}{2}} \qquad \text{a.s.},
\end{equation*}
where $J_k$ denotes an arbitrary process of order $k$.
\end{lem}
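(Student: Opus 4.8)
\noindent\emph{Proof proposal.} The plan is to argue by induction on the order $k$, along the lines of the proof of estimate (\ref{eq:101}) for the residues $R_K$: the recursive definitions (\ref{Jk1})--(\ref{Jk1b}) reduce the bound for a process of order $k$ to those for the lower--order processes occurring in its integrand, and each reduction step is carried out by a single application of Lemma \ref{lema14}.

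First I would settle the base cases. The process $J_1(t_n,t,x)$ is a stochastic integral with integrand $\Gam(t-r,x-y)\sig(u_{n-1}(r,y))$; since $\sig\in\cinf_b(\re)$ and $u_{n-1}(r,y)$ is ${\mathcal F}_{t_{n-1}}$-measurable --- so that its iterated Malliavin derivatives vanish on $[t_{n-1},t_n]$ and its conditional norms there reduce to conditional moments --- Lemma \ref{lema14} with $X_0=0$, $f=0$, $g=\sig$, $v=u_{n-1}$, $I_i\equiv1$ and $\gam=\tfrac12$ yields $\|J_1(t_n,t,x)\|^{t_{n-1},t_n}_{m,p}\le C\,\Del_{n-1}(g)^{1/2}$. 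The purely pathwise process $J_2(t_n,t,x)=\int_{t_{n-1}}^{t_n}\int_{\red}\Gam(t-r,x-y)b(u_{n-1}(r,y))\,dy\,dr$ is handled directly: $\|J_2(t_n,t,x)\|^{t_{n-1},t_n}_{m,p}\le C(t_n-t_{n-1})\le C\,\Del_{n-1}(g)$, using $\int_{\red}\Gam(s,y)\,dy=1$, boundedness of $b$ and the lower bound (\ref{eq:112}); the remaining order-$2$ processes are of the form (\ref{Jk1}) and fall under the inductive step.

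For the inductive step, assume the estimate holds for every process of order $<k$. Any process $J_k$ of order $k$ is either of the form (\ref{Jk1}), a stochastic integral whose integrand carries the factor $\sig^{(\ell)}(u_{n-1}(r,y))\prod_{j=1}^{\ell}J_{m_j}(r,r,y)$ with $\ell\le k-1$ and $m_1+\cdots+m_\ell=k-1$, or of the form (\ref{Jk1b}), a pathwise integral with $b^{(\ell)}$ in place of $\sig^{(\ell)}$, $\ell\le k-2$ and $m_1+\cdots+m_\ell=k-2$. By the induction hypothesis applied uniformly in $(r,y)$, each diagonal factor $J_{m_j}(r,r,y)$ obeys a bound of order $\tfrac{m_j}{2}$ in the relevant conditional norm, so hypotheses (\ref{lema14a})--(\ref{lema14b}) hold with $\al_i=m_i$ and total order $\al=k-1$ (resp.\ $\al=k-2$). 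Applying Lemma \ref{lema14} with $X_0=0$, $v=u_{n-1}$, $\gam=\tfrac12$, and $f=0$, $g(z)=\sig^{(\ell)}(z)$ (resp.\ $g=0$, $f(z)=b^{(\ell)}(z)$), and then bounding $\int_{t_{n-1}}^s J(t-r)\,dr\le\Del_{n-1}(g)$ and absorbing factors $(t_n-t_{n-1})$ via (\ref{eq:112}), one obtains in the first case $\Del_{n-1}(g)^{1/2-1/p}\,\Del_{n-1}(g)^{(k-1)/2+1/p}=\Del_{n-1}(g)^{k/2}$ and in the second $(t_n-t_{n-1})\,\Del_{n-1}(g)^{(k-2)/2}\le C\,\Del_{n-1}(g)^{k/2}$. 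Since for each order there are only finitely many processes and every estimate depends on the partition solely through $\Del_{n-1}(g)$, the constant $C$ is independent of $n$; this closes the induction.

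The hard part will be the bookkeeping needed to cast the recursion (\ref{Jk1})--(\ref{Jk1b}) into the hypotheses of Lemma \ref{lema14}: one must pair the parameters $(\gam,\al_i,\al)$ to each of the two forms, verify the boundedness of the coefficients $\sig^{(\ell)},b^{(\ell)}$ and of the diagonal factors $J_{m_j}(r,r,y)$ coming from the induction hypothesis, and check that the exponent arithmetic ($\tfrac12+\tfrac{k-1}{2}=\tfrac k2$ in the stochastic case, $1+\tfrac{k-2}{2}=\tfrac k2$ in the pathwise case) genuinely closes the recursion uniformly in $n$. This is, however, the very machinery already used for the residues $R_K$, so no essentially new difficulty is expected.
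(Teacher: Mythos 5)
Your argument is essentially the paper's own proof: induction on $k$, with each step a single application of Lemma \ref{lema14} taking $v=u_{n-1}$, $\gamma=\tfrac12$, $\alpha_i=m_i$ (hence $\alpha=k-1$ for the form (\ref{Jk1}) and $\alpha=k-2$ for (\ref{Jk1b})), and the lower bound (\ref{eq:112}) to absorb the factor $t_n-t_{n-1}$ in the pathwise case, with exactly the same exponent arithmetic. The only (cosmetic) difference is in the base case $k=1$: the paper takes $\gamma=0$ (equivalently no factors $I_i$ at all), which is the consistent parameter choice, since with $I_i\equiv 1$ hypothesis (\ref{lema14b}) cannot hold with an exponent $\gamma\alpha_i>0$ as $s\downarrow t_{n-1}$; the resulting bound $C\,\Delta_{n-1}(g)^{1/2}$ is the same.
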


\begin{proof} It is based on several applications of Lemma \ref{lema14}. We shall perform an induction argument with respect to $k$. Assume that $J_k$ is of the form (\ref{Jk1}). 

The case $k=1$ follows directly from Lemma \ref{lema14} applied to the case $X_0=0$, $f=0$,
$g(z)=\sigma(z)$, $v=u_{n-1}$ and $\gamma=0$, where the constant on the right-hand side of (\ref{lema14a}) only depends on $j,p$ and $T$. Similarly, the case $k=2$ follows again appealing to Lemma \ref{lema14}, with $X_0=0$, $f=0$,
$g(z)=\sigma'(z)$, $v=u_{n-1}$, $i_0=1$, $\gamma=\frac12$, $\alpha=1$, and using the result for $k=1$.
 
Assume that the statement holds for any process of order up to $k-1$. Then, if $J_k$ is given by (\ref{Jk1}), we can apply Lemma \ref{lema14} in the case $X_0=0$, $f=0$,
$g(z)=\sigma^{(\ell)}(z)$, $v=u_{n-1}$, $i_0=\ell$, $\gamma=\frac12$, $\alpha_i=m_i$ and $\alpha=k-1$. 
Observe that the induction hypothesis guarantees that condition (\ref{lema14b}) is satisfied. Altogether, we obtain:
\begin{equation*} \begin{split}
 \Vert J_k(t_n,t,x) \Vert^{t_{n-1},t_n}_{m, p}&\leq C 
\left(\int_{t_{n-1}}^{t_n} J(t-r)\, dr\right)^{\frac 12 -\frac 1p}\\
&\qquad \times  \left\{ \int_{t_{n-1}}^{t_n} 
\left( \int_{t_{n-1}}^s J(t-r)\, dr\right)^{\frac{(k-1) p}{2}} J(t-s)\, ds \right\}^{\frac 1p} \\
&\leq C \, \Delta_{n-1} (g)^{\frac{k}{2}}, \qquad \text{a.s.}
\end{split}
\end{equation*}

On the other hand, if $J_k$ is of the form (\ref{Jk1b}), one proceeds similarly as before. Namely, we apply  Lemma \ref{lema14} with $X_0=0$, $g=0$,
$f(z)=\sigma^{(\ell)}(z)$, $v=u_{n-1}$, $i_0=\ell$,$\gamma=\frac12$, $\alpha_i=m_i$, $\alpha=k-2$, so that we end up with:
\begin{equation*} \begin{split}
 \Vert J_k(t_n,t,x) \Vert^{t_{n-1},t_n}_{m, p}&\leq C \int_{t_{n-1}}^{t_n} \left(\int_{t_{n-1}}^{s} J(t-r)\, dr\right)^{\frac{k-2}{2}} ds  \\
& \leq C\, (t_n-t_{n-1}) \Delta_{n-1} (g)^{\frac{k-2}{2}} \\
&\leq C\, \Delta_{n-1} (g)^{\frac{k}{2}}, \qquad \text{a.s.},
\end{split}
\end{equation*}
where the last inequality follows from (\ref{eq:112}). This concludes the proof.
\end{proof}

\medskip

With this lemma, we conclude that hypotheses {\bf{(H1)}}, {\bf{(H2a)}}-{\bf{(H2d)}} are satisfied and we obtain, in view of Definition \ref{def}, that the random variable $F=u(t,x)$ is uniformly elliptic. Therefore, by Theorem \ref{t1}, we have proved the lower bound in Theorem \ref{maint}.

\section{Proof of the upper bound}
\label{sec:upper-bound}

This section is devoted to prove the upper bound of Theorem \ref{maint}.  For this,
we will follow a standard procedure based on the density formula provided by the integration-by-parts formula of the Malliavin calculus and the exponential martingale inequality applied to the martingale part of our random variable $u(t,x)$ for $(t,x)\in (0,T]\times \red$ 
(this method has been used for instance in \cite[Proposition 2.1.3]{Nualart:06} and \cite{Guerin,Dalang:09}). We remind that we are assuming that the coefficients $b$ and $\sig$ belong to $\mathcal{C}^\infty_b(\re)$ and the spectral measure $\mu$ satisfies 
\[
\int_{\red}\frac{1}{1+\Vert \xi \Vert^2}\, \mu(d\xi) < +\infty.
\]
Moreover, we have that:
\begin{equation} \begin{split}
u(t,x)= F_0&+ \int_0^t \int_{\red} \Gam(t-s,x-y)\sig(u(s,y)) W(ds,dy)  \\
& + \int_0^t \int_{\red} \Gam(t-s,x-y) b(u(s,y)) \, dy ds, \quad a.s.,
\label{eq:222}
\end{split}
\end{equation}
where $F_0=(\Gam(t)*u_0)(x)$.

\medskip

To begin with, we consider the continuous one-parameter martingale $\{M_a, \tf_a,\, 0 \leq a \leq t\}$ defined by
\begin{equation*}
M_a=\int_0^a \int_{\R^d} \Gamma(t-s,x-y) \sigma(u(s,y)) \, W(ds, dy),
\end{equation*}
where the  filtration $\{\mathcal{F}_a, \, 0\leq a \leq t\}$ is the one generated by $W$. Notice that
$M_0=0$ and one has that
\begin{equation*}
\langle M \rangle_t =\Vert \Gamma(t-\cdot,x-\star) \sigma(u(\cdot,\star)) \Vert_{\mathcal{H}_t}.
\end{equation*}
Since $\sigma$ is bounded, we clearly get that $\langle M \rangle_t \leq c_2 \Phi(t)$, a.s. for some positive constant $c_2$ (see for instance \cite[Theorem 2]{Dalang:99}).

On the other hand, since the drift $b$ is also assumed to be bounded and $\Gam(s)$ defines a probability density, we can directly estimate the drift term in (\ref{eq:222}) as follows: 
\begin{equation}
\left\vert\int_0^t \int_{\red} \Gam(t-s,x-y) b(u(s,y)) \, dy ds \right \vert \leq c_3 \, T, \qquad \text{a.s.}
\label{eq:224}
\end{equation}

We next consider the expression for the density of a non-degenerate random variable that follows from the integration-by-parts formula of the Mallavin calculus. Precisely, we apply \cite[Proposition 2.1.1]{Nualart:06} so that we end up with the following expression for the density $p_{t,x}$ of $u(t,x)$:
\[
p_{t,x}(y)=E \left[ {\bf{1}}_{\{ u(t,x)>y \}} \del (Du(t,x)\, \|Du(t,x)\|^{-2}_{\hact} )\right], \quad y\in \re,
\]
where $\del$ denotes the divergence operator or Skorohod integral, that is the adjoint of the Malliavin derivative operator (see \cite[Ch. 1]{Nualart:06}).
Taking into account that the Skorohod integral above has mean zero, one can also check that:
\[
p_{t,x}(y)= -E \left[ {\bf{1}}_{\{ u(t,x)<y \}} \del (Du(t,x)\, \|Du(t,x)\|^{-2}_{\hact} )\right], \quad y\in \re.
\]
Then, owing to (\ref{eq:222}), \cite[Proposition 2.1.2]{Nualart:06} and the estimate (\ref{eq:224}), we can infer that:
\begin{equation}
\begin{split}
p_{t,x}(y) &\leq c_{\alpha, \beta, q} \P\{|M_t| > |y-F_0|-c_3T\}^{1/q} \\
& \qquad \times \biggl( \E[\Vert D u(t,x) \Vert_{\mathcal{H}_t}^{-1}]+ \Vert D^2 u(t,x)\Vert_{L^{\alpha}(\Omega; \mathcal{H}_t^{\otimes 2})} \Vert \Vert D u(t,x) \Vert^{-2}_{\mathcal{H}_t} \Vert_{ L^\beta(\Om)}\biggr),
\end{split}
\label{eq:223}
\end{equation}
where $\alpha, \beta, q$ are any positive real numbers satisfying $\frac{1}{\alpha}+\frac{1}{\beta}+\frac{1}{q}=1$.
Thus, we proceed to bound all the terms on the right-hand side of (\ref{eq:223}). 

First, by the exponential martingale inequality (see for instance \cite[Sec. A2]{Nualart:06}) and the fact that $\langle M \rangle_t \leq c_2 \Phi(t)$, we obtain:
\begin{equation} 
\P \{|M_t| > \vert y-F_0\vert-c_2T \} \leq 2 \exp \biggl( -\frac{(\vert y-F_0 \vert-c_3T)^2}{ c_2 \Phi(t)}\biggr).
\label{eq:226}
\end{equation}
Secondly, we observe that the following estimate is satisfied: for all $p>0$, there exists a constant $C$, depending also on $T$, such that
\beq
E( \|Du(t,x)\|_{\hac_t}^{-2p} )\leq C\, \Phi(t)^{-p}.
\label{eq:225}
\eeq
\begin{remark}
Indeed, this bound could be considered as a kind of particular case of (\ref{aux}). Nevertheless, though its proof is very similar to that of Proposition \ref{prop1} and we will omit it (an even more similar proof is given in \cite[Theorem 6.2]{Nualart:07}), it is important to make the following observation. Namely, the fact that in (\ref{eq:225}), compared to Proposition \ref{prop1}, we are not considering a time interval of the form $[t_{n-1},t_n]$ but directly $[0,t]$,  makes it possible to obtain the desired estimate (\ref{eq:225}) without using condition (\ref{eq:112}). 
Therefore, owing to Remark \ref{rmk:heat}, one could verify that the density upper bound we are proving in the present section turns out to be valid for the stochastic wave equation in space dimension $d\in \{1,2,3\}$, with the only differences that $F_0$ and the term $c_3 T$ should be replaced by the corresponding contribution of the initial conditions and $c_3T^2$, respectively. Note that the power $T^2$ in that case is due to the total mass of the associated fundamental solution considered as a measure in $\red$ (see \cite[Example 6]{Dalang:99}). Finally, we point out that, for spatial dimension one, density upper bounds for a reduced stochastic wave equation have been obtained in \cite{Dalang:04}.
\end{remark}

Going back to our estimate (\ref{eq:225}), let in particular $p=\frac 12$ and $p=\beta$ in such a way that, respectively:
\begin{equation}
\E[\Vert D u(t,x) \Vert_{\mathcal{H}_t}^{-1}] \leq C\, \Phi(t)^{-1/2} \quad \text{and}\quad 
\Vert \Vert D u(t,x) \Vert^{-2}_{\mathcal{H}_t} \Vert_{L^\beta(\Om)} \leq C\, \Phi(t)^{-1}.
\label{eq:227}
\end{equation}

Eventually, Lemma \ref{lema4} implies that
\begin{equation} 
\Vert D^2 u(t,x)\Vert_{L^{\alpha}(\Omega; \mathcal{H}_t^{\otimes 2})} \leq C\, \Phi(t) \leq C\, \Phi(t)^{\frac 12},
\label{eq:228}
\end{equation}
where the latter constant $C$ depends on $T$. 
Hence, plugging estimates (\ref{eq:226})-(\ref{eq:228}) into (\ref{eq:223}) we end up with:
\begin{equation*}
p_{t,x}(y) \leq c_1\, \Phi(t)^{-1/2} \exp \biggl( -\frac{(|y-F_0\vert-c_3 T)^2}{c_2 \Phi(t)}\biggr),
\end{equation*}
where the constants $c_i$ do not depend on $(t,x)$. This concludes the proof of Theorem \ref{maint}.

\appendix
\section{Appendix}

The first part of this section is devoted to recall the construction of the Hilbert-space-valued stochastic and pathwise integrals used throughout the paper, as well as establish the corresponding conditional $L^p$-bounds for them. In the second part, we state and proof a technical result that has been very useful in the proofs of Section \ref{sec:lower-bound}.

\subsection{Hilbert-space-valued stochastic and pathwise integrals}
\label{Hilbert-integrals}

Let us briefly explain the construction of stochastic and pathwise integrals in a Hilbert-space-valued setting, the former of course being with respect to $W$. This is an important point in order to consider the linear stochastic equation satisfied by the iterated Malliavin derivative of the solution of many SPDEs (for a more detailed exposition, see \cite[Section 2]{Quer:04} and \cite{Marta}).

More precisely, let $\ca$ be a separable Hilbert space and
$\{K(t,x),\; (t,x)\in [0,T]\times\red\}$ an $\ca$-valued predictable process satisfying the following condition:
\beq
\sup_{(t,x)\in [0,T]\times\red} E(\|K(t,x)\|^p_\ca)<+\infty,
\label{eq:1}
\eeq
where $p\geq 2$. We aim to define the $\ca$-valued stochastic integral
\[
G\cdot W_t=\int_0^t \int_{\red} G(s,y) W(ds,dy), \quad t\in [0,T],
\]
for integrands of the form $G=\Gam(s,dy)K(s,y)$, where we assume here that $\Gam$ is as described in Remark \ref{rmk:general-gam}. In particular, $\Gam$ satisfies condition (\ref{hyp1}), that is:
\[
\int_0^T \int_{\R^d} \vert \mathcal{F}\Gamma(t) (\xi) \vert^2 \mu(d\xi) dt<+\infty.
\]
Note that these assumptions imply that $G$ is a well-defined element of $L^2(\Om\times [0,T]; \hac\otimes \ca)$. Recall that we denote by $\{\tf_t,\; t\geq 0\}$ the (completed) filtration generated by $W$.
Then, the stochastic integral of $G$ with respect to $W$ can be defined componentwise, as follows: let $\{e_j,\; j\in \N\}$ be a complete orthonormal basis of $\ca$ and set $G^j:=\Gam(s,dy)K^j(s,y)$, where $K^j(s,y):=\langle K(s,y),e_j\rangle_\ca$, $j\in \N$. We define
\[
G\cdot W_t:= \sum_{j\in \N} G^j\cdot W_t,
\]
where $G^j\cdot W_t=\int_0^t\int_{\red} \Gam(s,y)K^j(s,y)W(ds,dy)$ is a well-defined real-valued stochastic integral (see \cite[Remark 1]{Quer:04}). By (\ref{eq:1}), one proves that the above series is convergent in $L^2(\Om;\ca)$ and the limit does not depend on the orthonormal basis. Moreover, $\{G\cdot W_t, \tf_t, \; t\in[0,T]\}$ is a continuous square-integrable martingale such that
\[
E(\|G\cdot W_T\|^2_\ca)=E(\|G\|^2_{\hac_T\otimes \ca}).
\]
We also have the following estimate for the $p$th moment of $G\cdot W_t$ (see \cite[Theorem 1]{Quer:04}): for all $t\in [0,T]$,
\beq
E(\|G\cdot W_t\|^p_\ca)\leq C_p \Phi(t)^{\frac p2-1} \int_0^t \sup_{x\in \red} E(\|K(s,x)\|^p_\ca) J(s) \,ds,
\label{eq:2}
\eeq
where we remind that
\[
\Phi(t)= \int_0^t \int_{\red} |\tf \Gam(s)(\xi)|^2 \, \mu(d\xi)ds \quad \text{and} \quad J(s)=\int_{\red} |\tf \Gam(s)(\xi)|^2 \, \mu(d\xi).
\]

Next, we consider a conditional version of (\ref{eq:2}):
\begin{lem} \label{lema2}
For all $p\geq 2$ and $0\leq a<b\leq T$, we have:
\[
E [\|G\cdot W_b-G\cdot W_a\|^p_\ca| \tf_a]\leq C_p (\Phi(b)-\Phi(a))^{\frac p2-1} \int_a^b \sup_{x\in \red} E[\|K(s,x)\|^p_\ca|\tf_a] \,J(s) \,ds, \quad a.s.
\]
\end{lem}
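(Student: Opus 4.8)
The plan is to carry the conditioning on $\tf_a$ through each step of the proof of the unconditional estimate (\ref{eq:2}), i.e. of \cite[Theorem 1]{Quer:04}: the spatial analysis is identical and the only genuinely new ingredient is that conditional versions of the two probabilistic inequalities involved are available. First I would observe that $G\cdot W_b-G\cdot W_a$ is the stochastic integral over $[0,T]$ of $\1_{(a,b]}(s)\,\Gam(s,dy)K(s,y)$ with respect to $W$, and that, by the martingale property recalled above, the process $r\mapsto G\cdot W_{a+r}-G\cdot W_a$ is, conditionally on $\tf_a$ and with respect to the shifted filtration $(\tf_{a+r})_{r\ge0}$, a continuous square-integrable $\ca$-valued martingale vanishing at $r=0$, whose tensor quadratic variation at $r=b-a$ equals $Q:=\int_a^b\langle\Gam(s,\cdot)K(s,\cdot),\Gam(s,\cdot)K(s,\cdot)\rangle_{\hac\otimes\ca}\,ds$. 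The conditional Burkholder--Davis--Gundy inequality for Hilbert-space-valued martingales then gives
\[
E\bigl[\,\|G\cdot W_b-G\cdot W_a\|_\ca^p\,\bigm|\,\tf_a\,\bigr]\;\le\;C_p\,E\bigl[\,Q^{p/2}\,\bigm|\,\tf_a\,\bigr]\qquad\text{a.s.}
\]

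Next I would bound the integrand of $Q$ pointwise in $(s,\omega)$, exactly as in the derivation of (\ref{eq:2}): expanding the $\hac$-inner product componentwise along an orthonormal basis of $\ca$ via $\langle\phi,\phi\rangle_\hac=\int_{\red^2}\phi(w)\,\phi(w-z)\,dw\,\Lam(dz)$, and using that $\Lam\ge0$ (so the basis sum may be interchanged with the integrals by Tonelli), then the Cauchy--Schwarz inequality in $\ca$, and then the Cauchy--Schwarz inequality with respect to the finite measure $\Gam(s,w)\Gam(s,w-z)\,dw\,\Lam(dz)$, whose total mass is exactly $J(s)$, one obtains
\[
\langle\Gam(s,\cdot)K(s,\cdot),\Gam(s,\cdot)K(s,\cdot)\rangle_{\hac\otimes\ca}\;\le\;\int_{\red}\|K(s,w)\|_\ca^2\,g_s(w)\,dw,
\]
where $g_s\ge0$ satisfies $\int_{\red}g_s(w)\,dw=J(s)$ (when $\Gam(s,\cdot)$ is only a measure one first smooths it by an approximation of the identity, as in \cite[Lemma 5]{Sanz:04}). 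Hence $Q\le\int_a^b\!\int_{\red}\|K(s,w)\|_\ca^2\,g_s(w)\,dw\,ds$, and applying the conditional Minkowski integral inequality (valid since $p\ge2$) followed by Hölder's inequality in the $s$-variable with respect to the measure $J(s)\,ds$ on $[a,b]$ --- the same two steps used to pass from the quadratic variation to the right-hand side of (\ref{eq:2}) --- produces
\[
E\bigl[\,Q^{p/2}\,\bigm|\,\tf_a\,\bigr]\;\le\;\Bigl(\int_a^b J(s)\,ds\Bigr)^{p/2-1}\int_a^b\sup_{w\in\red}E\bigl[\,\|K(s,w)\|_\ca^p\,\bigm|\,\tf_a\,\bigr]\,J(s)\,ds\qquad\text{a.s.},
\]
and since $\int_a^b J(s)\,ds=\Phi(b)-\Phi(a)$, combining the two displays yields the assertion.

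The spatial estimates above are routine; the point that requires care is the first step, namely making the \emph{conditional} isometry/BDG machinery rigorous for an $\ca$-valued integral that was defined only as an $L^2(\Om;\ca)$-limit of componentwise real stochastic integrals. I would handle this in the usual way: establish the bound first for elementary $\tf$-predictable integrands $K$, where the conditional isometry $E[\|G\cdot W_b-G\cdot W_a\|_\ca^2\mid\tf_a]=E[Q\mid\tf_a]$ and the conditional BDG inequality are immediate, and then pass to the general case by approximation, using (\ref{eq:1}) and the resulting a priori bound; along the way one checks that $\sup_{w\in\red}E[\|K(s,w)\|_\ca^p\mid\tf_a]$ admits an $\tf_a$-measurable version, which holds in all our applications because there $K$ possesses an $(s,w)$-continuous modification.
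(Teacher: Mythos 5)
Your proposal is correct and follows essentially the same route as the paper, which proves Lemma \ref{lema2} simply by noting that one repeats the proof of the unconditional estimate (\ref{eq:2}) from \cite[Theorem 1]{Quer:04} with the Burkholder--Davis--Gundy inequality replaced by its conditional version for Hilbert-space-valued martingales. Your spelled-out spatial estimates (Cauchy--Schwarz in $\ca$ and with respect to the measure of total mass $J(s)$, then H\"older in time) and the measurability/approximation remarks are exactly the details the paper leaves implicit.
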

The proof of this result is essentially the same as its non-conditioned counterpart (\ref{eq:2}), except of the use of
a conditional Burkholder-Davis-Gundy type inequality for Hilbert-space-valued martingales.

\medskip

Let us now recall how we define the Hilbert-space-valued pathwise integrals involved in the stochastic equations satisfied by the Malliavin derivative of the solution. Namely, as before, we consider a Hilbert space $\ca$, a complete orthonormal system $\{e_j,\; j\in \N\}$, and an $\ca$-valued stochastic process
$\{Y(t,x),\; (t,x)\in [0,T]\times \red\}$ such that, for $p\geq 2$,
\beq
\sup_{(t,x)\in [0,T]\times \red} E(\|Y(t,x)\|^p_\ca)<+\infty.
\label{eq:4}
\eeq
Then, we define the following pathwise integral, with values in $L^2(\Om;\ca)$:
\[
\ci_t:=\int_0^t \int_{\red} Y(s,y)\, \Gam(s,dy) ds:=\sum_{j\in \N} \left( \int_0^t \int_{\red} \langle Y(s,y),e_j\rangle_\ca \, \Gam(s,dy) ds\right) e_j, \quad t\in [0,T],
\]
where $\Gam$ is again as general as described in Remark \ref{rmk:general-gam}. Moreover, a direct consequence of the considerations in \cite[p. 24]{quer-thesis} is that:
\beq
E(\|\ci_t\|^p_\ca)\leq \left(\int_0^t \Gam(s,\red)\, ds\right)^{p-1} \int_0^t \sup_{z\in \red} E(\|Y(s,z)\|^p_\ca) \,\Gam(s,\red)\, ds.
\label{eq:3}
\eeq
In the paper, we need the following straightforward conditional version of the above estimate (\ref{eq:3}):
\begin{lem} \label{lema3}
Let $p\geq 2$. Then, for any $\sig$-field $\cg$, we have:
\[
E[\|\ci_t\|^p_\ca | \cg] \leq \left(\int_0^t \Gam(s,\red)\, ds\right)^{p-1} \int_0^t \sup_{z\in \red} E[\|Y(s,z)\|^p_\ca |\cg] \,\Gam(s,\red)\, ds, \quad a.s.
\]
\end{lem}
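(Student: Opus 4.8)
The plan is to follow the proof of the unconditioned estimate (\ref{eq:3}) essentially verbatim, replacing the expectation by $E[\,\cdot\mid\cg\,]$ and invoking the conditional Tonelli theorem to push it through the $\Gam(s,dy)\,ds$-integral. First I would record the pathwise bound
\[
\|\ci_t\|_\ca \leq \int_0^t \int_{\red} \|Y(s,y)\|_\ca \, \Gam(s,dy)\, ds \qquad \text{a.s.},
\]
which follows from viewing $\ci_t$ as the $\ca$-valued Bochner integral $\int_0^t\int_{\red} Y(s,y)\,\Gam(s,dy)\,ds$ (equivalently, from its componentwise definition together with the Cauchy--Schwarz inequality in $\ell^2(\N)$) and the triangle inequality; here one uses, as in Remark \ref{rmk:general-gam}, that each $\Gam(s)$ is a non-negative measure with $\sup_{0\le s\le T}\Gam(s,\red)<\infty$, so that $\Gam(s,dy)\,ds$ is a finite measure on $[0,t]\times\red$ of total mass $m_t:=\int_0^t\Gam(s,\red)\,ds$.

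Next I would apply Jensen's inequality for the convex function $z\mapsto z^p$ with respect to the probability measure $m_t^{-1}\,\Gam(s,dy)\,ds$, which gives
\[
\|\ci_t\|_\ca^p \leq m_t^{\,p-1}\int_0^t \int_{\red} \|Y(s,y)\|_\ca^p \, \Gam(s,dy)\, ds \qquad \text{a.s.}
\]
(this is exactly the step producing (\ref{eq:3}) in the unconditioned case). Taking $E[\,\cdot\mid\cg\,]$ on both sides and exchanging it with the $\Gam(s,dy)\,ds$-integral --- legitimate by the conditional Tonelli theorem, since the integrand is non-negative and, by predictability of $Y$, jointly measurable in $(s,y,\omega)$, while (\ref{eq:4}) makes the right-hand side a.s.\ finite --- then bounding $E[\,\|Y(s,y)\|_\ca^p\mid\cg\,]\le \sup_{z\in\red}E[\,\|Y(s,z)\|_\ca^p\mid\cg\,]$ uniformly in $y$ and performing the remaining $\Gam(s,dy)$-integration over $\red$ (which contributes one extra factor $\Gam(s,\red)$), I obtain
\[
E[\,\|\ci_t\|_\ca^p\mid\cg\,] \leq m_t^{\,p-1}\int_0^t \Bigl(\sup_{z\in\red}E[\,\|Y(s,z)\|_\ca^p\mid\cg\,]\Bigr)\Gam(s,\red)\, ds \qquad \text{a.s.},
\]
which is the assertion of the lemma once $m_t$ is written out as $\int_0^t\Gam(s,\red)\,ds$.

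I do not anticipate any genuine obstacle: the argument is purely measure-theoretic and identical in structure to the one underlying (\ref{eq:3}) (see \cite[p. 24]{quer-thesis}). The only point deserving a word of care is the measurability of $s\mapsto \sup_{z\in\red}E[\,\|Y(s,z)\|_\ca^p\mid\cg\,]$ required for the last integration and the joint measurability needed for the Tonelli exchange; both are handled exactly as in the unconditioned case, using the predictability of $Y$ and the separability of $\red$. Alternatively, the lemma can be read off from a conditional Minkowski integral inequality, which is itself an immediate consequence of the conditional Jensen inequality invoked above.
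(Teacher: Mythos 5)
Your argument is correct and is essentially the proof the paper has in mind: the paper states Lemma \ref{lema3} without proof as the ``straightforward conditional version'' of (\ref{eq:3}), whose derivation is exactly the pathwise bound plus Jensen/H\"older with respect to the finite measure $\Gam(s,dy)\,ds$ that you carry out, with the conditional expectation exchanged via conditional Tonelli. The only cosmetic remark is that the paper does not assume predictability of $Y$ (only joint measurability and the moment bound (\ref{eq:4})), but that is all your argument actually uses.
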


\subsection{An auxiliary result}
\label{sec:aux}

Let $a\leq b\leq t$ and $x\in \red$, and consider the following random variable:
\begin{equation} \begin{split}
X = X_0(t,a,b)&+\int_a^b \int_{\red} \Gam(t-s,x-y) f(v(s,y)) \prod_{i=1}^{i_0} I_i(s,y) \, dyds \\
& + \int_a^b \int_{\red} \Gam(t-s,x-y) g(v(s,y)) \prod_{i=1}^{i_0} I_i(s,y) W(ds,dy),
\label{eq:44}
\end{split}
\end{equation}
where $X_0(t,a,b)$ is a $\tf_a$-measurable random variable, $f,g:\re\rightarrow \re$ are smooth functions and $v$ and $I_i$ are certain smooth stochastic processes (in the Malliavin sense). As usual, $\Gam$ denotes the fundamental solution of the stochastic heat equation in $\red$. In the next result, we provide an estimate for the $p$th moment of the iterated Malliavin derivative of order $m$ for the random variable $X$.

\begin{lem}\label{lema14}
Assume that $f,g\in \cinf_b(\re)$ and that $v$ and $I_1,\dots,I_{i_0}$ are smooth stochastic processes on $[a,b]$ for which the random variable \textnormal{(\ref{eq:44})} is well-defined. Let $m\in \N$ and $p\geq 2$. Suppose that, for all $j=0,\dots,m$ and $s\in [a,t]$, there exist constants $c(j,p,a,s)>1$ and
$C(m,p)$ such that the former increases in $p$ and
\begin{equation} \label{lema14a}
\left( E_a \|D^j v(s,y)\|^p_{\hac_{a,b}^{\otimes j}} \right)^{\frac 1p} \leq c(j,p,a,s) \quad \text{and}
\end{equation}
\begin{equation} \label{lema14b}
\left( E_a \|D^j I_i(s,y)\|^p_{\hac_{a,b}^{\otimes j}} \right)^{\frac 1p} \leq C(m,p) \left(\int_a^s J(t-r)\, dr\right)^{\gam \al_i},
\end{equation}
for all $i=1,\dots,i_0$ and $y\in \red$, and some $\gam\geq 0$, $\al_1,\dots,\al_{i_0}>0$, where we recall that $J(t-r)=\int_{\red} |\tf \Gam(t-r)(\xi)|^2 \mu(d\xi)$.

Then $X\in \D^{m,\infty}$ and the following is satisfied: if we set $\al:=\al_1+\dots+\al_{i_0}$, there exist $p'$ and $p^*$ such that
\begin{equation} \begin{split}
&\left( E_a \|D^m X\|^p_{\hac_{a,b}^{\otimes m}}\right)^{\frac 1p} \leq \left( E_a \|D^m X_0(t,a,b)\|^p_{\hac_{a,b}^{\otimes m}}\right)^{\frac 1p}\\
&\qquad \qquad+ C_1 \int_a^b \left[ c^*(m-1,p',a,s)^m+c(m,p^*,a,s)\right] \left(\int_a^s J(t-r)dr\right)^{\al \gam} ds \\
& \qquad\qquad+ C_2 \left(\int_a^b J(t-r)\, dr\right)^{\frac 12 -\frac 1p} \left\{ \int_a^b \left[ c^*(m-1,p',a,s)^m+c(m,p^*,a,s)\right]^p \right. \\
& \qquad\qquad\qquad \times \left. \left( \int_a^s J(t-r)\, dr\right)^{\al \gam p} J(t-s)\, ds \right\}^{\frac 1p},
\label{eq:45}
\end{split}
\end{equation}
where $C_1, C_2$ are some positive constants possibly depending on $m$ and $p$, such that if $f \equiv 0$ then $C_1=0$, and if
$g \equiv 0$ then $C_2=0$.
We also use the notation $c^*(m-1,p',a,s):=\max_{0\leq j\leq m-1} c(j,p',a,s)$ and we set $c(-1,p,a,s)=0$.

In the case where $c(j,p',a,s)<1$ for all $j=1,\dots,m$, estimate \textnormal{(\ref{eq:45})} is replaced by
\begin{align*}
\left( E_a \|D^m X\|^p_{\hac_{a,b}^{\otimes m}}\right)^{\frac 1p} &\leq \left( E_a \|D^m X_0(t,a,b)\|^p_{\hac_{a,b}^{\otimes m}}\right)^{\frac 1p}
\nonumber \\
& \quad + C_1 \int_a^b \left[ c^*(m-1,p',a,s)+c(m,p^*,a,s)\right] \left(\int_a^s J(t-r)dr\right)^{\al \gam} ds \nonumber \\
& \quad + C_2 \left(\int_a^b J(t-r)\, dr\right)^{\frac 12 -\frac 1p} \left\{ \int_a^b \left[ c^*(m-1,p',a,s)+c(m,p^*,a,s)\right]^p \right. \nonumber \\
& \qquad \quad \times \left. \left( \int_a^s J(t-r)\, dr\right)^{\al \gam p} J(t-s)\, ds \right\}^{\frac 1p}.
\end{align*}

\end{lem}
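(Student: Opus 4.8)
The plan is to prove the estimate by induction on $m$, the order of the Malliavin derivative, using the explicit linear stochastic/pathwise integral equation satisfied by $D^m X$ together with the conditional $L^p$-moment bounds of Lemmas \ref{lema2} and \ref{lema3}. First I would treat the base case $m=0$, which is immediate: applying the conditional versions of the moment estimates \textnormal{(\ref{eq:2})} and \textnormal{(\ref{eq:3})} directly to the two integral terms in \textnormal{(\ref{eq:44})}, using the boundedness of $f$ and $g$ (so that $|f(v(s,y))|, |g(v(s,y))| \leq C$ pointwise), and bounding the product $\prod_{i=1}^{i_0} I_i(s,y)$ in $L^p$ by H\"older's inequality together with \textnormal{(\ref{lema14b})} with $j=0$, which produces exactly the factor $\bigl(\int_a^s J(t-r)\,dr\bigr)^{\al\gam}$ inside the time integrals. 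The powers $\frac12-\frac1p$ on the $J$-integral and the structure of the two terms (one from the pathwise integral, governed by $\Gam(s,\red)$ which is uniformly bounded and hence absorbed into constants, and one from the stochastic integral, governed by $J$) come out precisely as in \textnormal{(\ref{eq:45})} with $m=0$.

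For the inductive step, I would differentiate the equation \textnormal{(\ref{eq:44})} $m$ times in the Malliavin sense. By the Leibniz rule and the fact that $D$ commutes with stochastic and pathwise integration (as recalled in Section \ref{Hilbert-integrals}), $D^m X$ solves a linear equation whose inhomogeneous term collects: (i) $D^m X_0(t,a,b)$; (ii) terms where all $m$ derivatives hit the coefficient, i.e.\ involving $D^m(f(v(s,y)))$ and $D^m(g(v(s,y)))$, which by Fa\`a di Bruno expand into sums of products of derivatives of $f,g$ (bounded, since $f,g\in\cinf_b$) times products of $D^{j_1}v,\dots,D^{j_r}v$ with $j_1+\dots+j_r=m$; (iii) cross terms where some derivatives hit $v$ and the rest hit the factors $I_i$; and (iv) the ``diagonal'' term $\int\Gam\, g(v)\prod I_i\, D^m(\cdot)$-type contribution that feeds $D^m X$ back into itself. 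The terms of type (ii)–(iii) are estimated using \textnormal{(\ref{lema14a})} for the $v$-derivatives (giving factors $c(j,p',a,s)$, whose product over a partition of $m$ is controlled by $c^*(m-1,p',a,s)^m$ when all parts are $\leq m-1$, or by $c(m,p^*,a,s)$ when one part equals $m$) and \textnormal{(\ref{lema14b})} for the $I_i$-derivatives; H\"older's inequality is used to split the $L^p$ norm of a product into a product of $L^{p_k}$ norms with $\sum 1/p_k = 1/p$, which is where the auxiliary exponents $p'$ and $p^*$ enter. Again Lemmas \ref{lema2} and \ref{lema3} convert these pointwise-in-$(s,y)$ bounds into the two time-integral expressions of \textnormal{(\ref{eq:45})}. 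The self-referential term (iv) is handled by Gronwall's lemma \cite[Lemma 15]{Dalang:99}, exactly as in the proof of Lemma \ref{lema4}: one sets $F(b):=\bigl(E_a\|D^m X\|^p_{\hac_{a,b}^{\otimes m}}\bigr)^{1/p}$ viewed as a function of the upper limit, obtains $F(b) \leq (\text{explicit terms}) + C\int_a^b F(s)(J(t-s)+1)\,ds$, and concludes.

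The variant with $c(j,p',a,s)<1$ for $j=1,\dots,m$ follows the same argument, the only change being that in a product of derivatives of $v$ indexed by a partition of $m$ one cannot afford the crude bound $\prod c \le (c^*)^{\text{number of factors}}$ by the largest; instead, since each $c<1$, the product of several such factors is itself $\le c^*(m-1,p',a,s)$, so the power $m$ on $c^*$ is replaced by $1$, yielding the stated simplified estimate.

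The main obstacle I expect is bookkeeping the combinatorial explosion of terms produced by iterating the Leibniz rule on both $f(v(\cdot))$ and $g(v(\cdot))$ and on the product $\prod_{i=1}^{i_0} I_i$ simultaneously, and—crucially—verifying that in \emph{every} resulting term the accumulated power of $\bigl(\int_a^s J(t-r)\,dr\bigr)$ is at least $\al\gam = (\al_1+\dots+\al_{i_0})\gam$. This holds because each factor $I_i$ contributes, via \textnormal{(\ref{lema14b})}, at least $\bigl(\int_a^s J(t-r)\,dr\bigr)^{\gam\al_i}$ regardless of how many derivatives land on it, and these multiply; extracting $\bigl(\int_a^s J(t-r)\,dr\bigr)^{\al\gam}$ as a common factor and absorbing the remaining bounded quantities into $C_1,C_2$ is the step that must be done carefully but is otherwise routine. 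The requirement that $c(j,p,a,s)$ be increasing in $p$ is what lets one replace the various $L^{p_k}$-norm exponents arising from H\"older by a single pair $(p',p^*)$ dominating them all.
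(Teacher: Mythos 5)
There is a genuine gap, and it sits exactly where the real work of this lemma lies. You assert that ``$D$ commutes with stochastic and pathwise integration'' and your inventory (i)--(iv) of terms reflects that. For the pathwise integral this is fine, but the Malliavin derivative does \emph{not} commute with the Walsh/Dalang stochastic integral: differentiating $m$ times produces, besides the integral of $D^m\bigl(g(v)\prod_i I_i\bigr)$ against $W$, the trace terms in which each derivative variable $s_j$ hits the noise itself, namely
$\sum_{j=1}^m \Gam(t-s_j,x-\star)\,D^{m-1}_{\bar s(j)}\bigl(g(v(s_j,\star))\prod_{i} I_i(s_j,\star)\bigr)$,
exactly as in the term $Z^m$ of Proposition \ref{prop:dinf}. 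These terms are of leading order and are the most delicate part of the paper's proof (the quantity $A_1$ there): they are not covered by Lemma \ref{lema2}, and their estimation uses the smoothness of $\Gam$, Cauchy--Schwarz with respect to the covariance measure $\Lam(dy)$ and H\"older's inequality, in the spirit of the arguments of Quer-Sardanyons and Sanz-Sol\'e. Without these terms your bound for the stochastic-integral contribution cannot be obtained, so the proposal as written would fail at the inductive step.

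A second, related misreading: your term (iv), the alleged ``diagonal'' contribution that feeds $D^mX$ back into itself, does not exist. In \textnormal{(\ref{eq:44})} the integrands involve only the given processes $v$ and $I_1,\dots,I_{i_0}$, whose derivatives up to order $m$ are controlled by hypotheses \textnormal{(\ref{lema14a})}--\textnormal{(\ref{lema14b})}; $X$ itself never appears on the right-hand side, so no Gronwall argument (and in fact no induction on $m$) is needed --- this is precisely what distinguishes this lemma from Lemma \ref{lema4}, where $u$ solves an equation and $D^mu$ reappears inside the integrals. The remaining ingredients of your plan (Leibniz rule plus a generalized H\"older inequality to produce the factors $c^*(m-1,p',a,s)^m$, $c(m,p^*,a,s)$ and $\bigl(\int_a^s J(t-r)dr\bigr)^{\al\gam}$, Lemmas \ref{lema2} and \ref{lema3} to convert pointwise bounds into the two time-integral expressions, and the observation handling the case $c<1$) do match the paper's argument, but the missing trace terms and the spurious Gronwall step mean the structural skeleton of the proof is not yet correct.
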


\begin{proof}
The assumptions on the functions $f,g$ and the processes $v, I_i$ clearly yield that $X\in \D^{m,\infty}$. Hence, the proof will be devoted to establish estimate (\ref{eq:45}). The method is similar to that of Lemma 14 in \cite{Kohatsu:03}, and thus we will only focus on the parts of the proof which really exhibit a different methodology.

To begin with, we have that
\begin{equation} \begin{split}
 & \left( E_a \|D^m X\|^p_{\hac_{a,b}^{\otimes m}} \right)^{\frac 1p} \leq \left( E_a \|D^m X_0(t,a,b)\|^p_{\hac_{a,b}^{\otimes m}}\right)^{\frac 1p} \\
& \; + \int_a^b\int_{\red} \Gam(t-s,x-y)
\left( E_a \|D^m f(v(s,x-y)) \prod_{i=1}^{i_0} I_i(s,x-y)\|^p_{\hac_{a,b}^{\otimes m}} \right)^{\frac 1p} \, dyds \\
& \; + \left( E_a \left\| D^m \left(\int_a^b\int_{\red} \Gam(t-s,x-y) g(v(s,y)) \prod_{i=1}^{i_0} I_i(s,y) W(ds,dy)\right) \right\|^p_{\hac_{a,b}^{\otimes m}} \right)^{\frac 1p}
\label{eq:46}
\end{split}
\end{equation}
The second term on the right-hand side of the above inequality may be bounded in the same manner as for the corresponding term in the proof of \cite[Lemma 14]{Kohatsu:03} (see (13) therein). Indeed, one makes use of the Leibniz rule for the iterated Malliavin derivative and applies a generalization of H\"older's inequality, which altogether yields to
\begin{equation}
\begin{split} 
& \int_a^b\int_{\red} \Gam(t-s,x-y) \left( E_a \|D^m f(v(s,x-y)) \prod_{i=1}^{i_0} I_i(s,x-y)\|^p_{\hac_{a,b}^{\otimes m}} \right)^{\frac 1p} \, dyds \\
& \; \leq C(m,p) \int_a^b \left( c^*(m-1,p',a,s)^m + c(m,p^*,a,s)\right) \left( \int_a^s J(t-r) \, dr\right)^{\al \gam} ds.
\label{eq:47}
\end{split}
\end{equation}
Here, we have $p':=\max_k p_k$, where this maximum is taken over a finite set whose cardinal depends on $m$, and $p_k>0$ denote a certain set of real numbers needed for the application of the above-mentioned H\"older's inequality (see \cite[p. 455]{Kohatsu:03} for details). On the other hand, $p^*:=p$ in the case where $I_i$ is constant, for all $i$; otherwise $p^*:=p'$. Furthermore, in the case where $c(j,p',a,s)<1$ for all $j=1,\dots,m$, estimate (\ref{eq:47}) turns out to be
\[
C(m,p)\int_a^b \left( c^*(m-1,p',a,s) + c(m,p^*,a,s)\right) \left( \int_a^s J(t-r) \, dr\right)^{\al \gam} ds.
\]

In order to deal with the last term on the right-hand side of (\ref{eq:46}), the computations slightly differ from those used in \cite[Lemma 14]{Kohatsu:03}, since we are
considering the more general setting determined by the Hilbert space $\hac$. We will use the following notation: $\bar s:=(s_1,\dots,s_m)\in [a,b]^m$, and
$\bar s(j):=(s_1,\dots,s_{j-1},s_{j+1},\dots,s_m)$. Then, for instance, for any smooth random variable $Y$, $D^m_{\bar s}Y$ denotes the $\hac^{\otimes m}$-valued random variable defined by $(D^mY)(\bar s,\star)$.

Using this notation, we can first infer that, for all $\bar s\in [a,b]^m$:
\begin{equation}
\begin{split}
& D^m_{\bar s} \left( \int_a^b\int_{\red} \Gam(t-s,x-y) g(v(s,y)) \prod_{i=0}^{i_0} I_i(s,y) W(ds,dy) \right) \nonumber \\
& \quad = \sum_{j=1}^m \Gam(t-s_j,x-\star) D^{m-1}_{\bar s(j)} \left( g(v(s_j,\star)) \prod_{i=0}^{i_0} I_i(s_j,\star) \right) \\
& \quad \; + \int_{a\vee s_1\vee\dots \vee s_m}^b \int_{\red} \Gam(t-s,x-y) D^m_{\bar s}\left( g(v(s,y)) \prod_{i=0}^{i_0} I_i(s,y)\right) W(ds,dy),
\label{eq:48}
\end{split}
\end{equation}
where these equalities are understood as random variables with values in $\hac^{\otimes m}$, and we recall that $\star$ denotes the $\hac$-variable. Thus, the last term in (\ref{eq:46}) may be bounded by $A_1+A_2$, with
\begin{equation*} \begin{split}
&A_1^p= E_a \left| \int_{(a,b)^m} \left\| \sum_{j=1}^m \Gam(t-s_j,x-\star) D^{m-1}_{\bar s(j)} \left( g(v(s_j,\star)) \prod_{i=0}^{i_0} I_i(s_j,\star) \right) \right\|^2_{\hac^{\otimes m}} d\bar s\right|^{\frac p2},\\
&A_2^p=E_a \left| \int_{(a,b)^m} \left\| \int_{a\vee s_1\vee\dots \vee s_m}^b \int_{\red} \Gam(t-s,x-y) D^m_{\bar s}\left( g(v(s,y)) \prod_{i=0}^{i_0} I_i(s,y)\right) W(ds,dy) \right\|^2_{\hac^{\otimes m}} 
d\bar s \right|^{\frac p2}.
\end{split}
\end{equation*}
The term $A_1$ can be treated using similar arguments as those of \cite[Lemma 2]{Sanz:04}. Indeed, if we set
\[
Z_j(\bar s,y):= \left\| D^{m-1}_{\bar s(j)} \left( g(v(s_j,y)) \prod_{i=0}^{i_0} I_i(s_j,y) \right) \right\|_{\hac^{\otimes (m-1)}}
\]
(which is a real-valued random variable), then the fact that $\Gam$ is a smooth function and Cauchy-Schwarz and H\"older's inequalities yield, up to some positive constant,
\begin{equation}
 \begin{split}
A_1^p & \leq \sum_{j=1}^m E_a \left| \int_a^b ds_j \int_{\red} \Lam(dy) \int_{\red} dz \; \Gam(t-s_j,x-z) \Gam(t-s_j,x-z+y) \right. \\
& \qquad \times \left( \int_{(a,b)^{m-1}} Z_j(\bar s,z) Z_j(\bar s,z-y) \, d\bar s(j)\right) \Big|^{\frac p2} \\
& \leq \left(\int_a^b J(t-r)\, dr\right)^{\frac p2 -1} \sum_{j=1}^m \int_a^b \sup_{y\in \red} E_a \left( \int_{(a,b)^{m-1}} |Z_j(\bar s,y)|^2\, d\bar s(j) \right)^{\frac p2} J(t-s_j) \, ds_j.
\label{eq:49}
\end{split}
\end{equation}
The $\frac p2$th moment appearing in the latter term can be estimated using the same arguments we have commented above to obtain (\ref{eq:47}). Namely
\begin{align*}
&E_a \left( \int_{(a,b)^{m-1}} |Z_j(\bar s,y)|^2\, d\bar s(j) \right)^{\frac p2}  = E_a \left( \left\| D^{m-1}\left( g(v(s_j,y)) \prod_{i=1}^{i_0} I_i(s_j,y)\right) \right\|^{p}_{\hac^{\otimes (m-1)}_{a,b}} \right)\\
&\qquad \qquad \qquad \qquad\leq C(m-1,p) c^*(m-1,p',a,s_j)^{(m-1)p} \left(\int_a^{s_j} J(t-r)\,dr\right)^{\al \gam p}.
\end{align*}
Plugging this bound in (\ref{eq:49}) we eventually end up with
\begin{equation} \begin{split}
A_1^p & \leq m C(m-1,p) \left(\int_a^b J(t-r)\, dr\right)^{\frac p2 -1} \\
& \qquad \times \int_a^b c^*(m-1,p',a,s)^{(m-1)p} \left(\int_a^s J(t-r)\,dr\right)^{\al \gam p} J(t-s) \, ds.
\label{eq:50}
\end{split}
\end{equation}

In order to bound the term $A_2^p$, let us apply Lemma \ref{lema2} in the particular case where $\ca=\hac^{\otimes m}_{a,b}$ and
\[
G= D^m \left( g(v(s,y)) \prod_{i=0}^{i_0} I_i(s,y)\right).
\]
Hence, we have that, up to some positive constant,
\[
A_2^p \leq \left(\int_a^b J(t-r)\, dr\right)^{\frac p2 -1} \int_a^b \sup_{y\in \red} E_a \left\|D^m \left( g(v(s,y)) \prod_{i=0}^{i_0} I_i(s,y)\right) \right\|_{\hac^{\otimes m}_{a,b}}^p J(t-s)\, ds.
\]
At this point, one applies the same method that we have used to obtain estimate (\ref{eq:47}) (see also the last part in the proof of \cite[Lemma 14]{Kohatsu:03}), so that we can infer that
\begin{equation}
\begin{split}
A_2^p & \leq \left(\int_a^b J(t-r)\, dr\right)^{\frac p2 -1} \int_a^b \left[ c^*(m-1,p',a,s)^m+c(m,p^*,a,s)\right]^p \\
& \qquad \quad \times \left( \int_a^s J(t-r)\, dr\right)^{\al \gam p} J(t-s)\, ds.
\label{eq:51}
\end{split}
\end{equation}
We conclude the proof by putting together estimates (\ref{eq:47}), (\ref{eq:50}) and (\ref{eq:51}).

\end{proof}

\begin{remark}
As for Lemma \ref{lema4}, the above lemma still remains valid for a slightly more general situation. Namely, in the case where $\Gam$ satisfies the assumptions specified in Remark \ref{rmk:general-gam}, such as for the stochastic wave equation in space dimension $d\in \{1,2,3\}$. In such a general setting, the proof of Lemma \ref{lema14} becomes even more technical and tedious since one needs to smooth $\Gam$ by means of an approximation of the identity. For the sake of conciseness, we have decided to focus the proof on our stochastic heat equation.
\end{remark}

\bigskip

\bigskip

\noindent {\bf{Acknowlegments}} 

\noindent This work started while the first author was visiting the Centre de Recerca Matem\`atica (Barcelona), to which she would like to thank for the financial support. Part of this work has also been done while the authors visited the Hausdorff Research Institute for Mathematics (HIM) in Bonn, where they have been supported by a grant in the framework of the HIM Junior Trimester Program on Stochastics. The second author is also supported by the grant MCI-FEDER Ref. MTM2009-08869.


\end{document}